\documentclass [11pt,a4paper]{article}
\usepackage[ansinew]{inputenc}
\usepackage{amssymb}
\usepackage{amsmath}
\usepackage{graphicx}
\usepackage{amsthm}
\usepackage{cite}
\textwidth=16cm \textheight=24cm \topmargin=-1cm
\oddsidemargin=0cm \flushbottom

\newtheorem{theorem}{\textbf{Theorem}}[section]
\newtheorem{lemma}{\textbf{Lemma}}[section]
\newtheorem{proposition}{\textbf{Proposition}}[section]
\newtheorem{corollary}{\textbf{Corollary}}[section]
\newtheorem{remark}{\textbf{Remark}}[section]
\newtheorem{definition}{\textbf{Definition}}[section]

\allowdisplaybreaks[4]

\def\be{\begin{equation}}
\def\ee{\end{equation}}
\def\bea{\begin{eqnarray}}
\def\eea{\end{eqnarray}}
\def\bt{\begin{theorem}}
\def\et{\end{theorem}}
\def\bl{\begin{lemma}}
\def\el{\end{lemma}}
\def\br{\begin{remark}}
\def\er{\end{remark}}
\def\bp{\begin{proposition}}
\def\ep{\end{proposition}}
\def\bc{\begin{corollary}}
\def\ec{\end{corollary}}
\def\bd{\begin{definition}}
\def\ed{\end{definition}}

 \def\non{\nonumber }

\def \au {\rm}

\def \no#1#2#3 {{\bf #1} (#3), #2.}
\def \eds#1#2#3 {#1, #2, #3.}


\begin{document}

\title{Analysis of a diffuse-interface model for the binary viscous incompressible fluids with thermo-induced Marangoni effects}
\author{
{\sc Hao Wu} \footnote{School of Mathematical Sciences and Shanghai
Key Laboratory for Contemporary Applied Mathematics, Fudan
University, 200433 Shanghai, China, Email:
\textit{haowufd@yahoo.com}.}\ \  and {\sc Xiang Xu}
\footnote{Department of Mathematical Sciences, Carnegie Mellon
University, Pittsburgh, PA 15213, Email:
\textit{xuxiang@andrew.cmu.edu}.}}
\date{\today}
\maketitle

\begin{abstract}
In this paper we study the well-posedness and long-time dynamics of
a diffuse-interface model for the mixture of two viscous
incompressible Newtonian fluids with thermo-induced Marangoni
effects. The governing system consists of the Navier--Stokes
equations coupled with phase-field and energy transport equations.
We first derive an energy inequality that illustrates the
dissipative nature of the system under the assumption that the
initial temperature variation is properly small. Then we establish
the existence of weak/strong solutions and
discuss the long-time behavior as well as the stability of the system.

{\bf Keywords.} Phase-field model, Navier--Stokes equations,
Marangoni effects, well-posedness, long-time dynamics.

{\bf Subject Classifications.} 35Q35, 35K55, 76D05.

\end{abstract}

\section{Introduction}
\noindent The study of interface dynamics is of great importance in
the hydrodynamic theory of complex fluids. In the classical
approaches (e.g., the sharp-interface model), the interface is usually considered to be a $n-1$
dimensional free surface of zero width that evolves in time with the
fluid. The resulting hydrodynamic system describing the mixture of
two immiscible Newtonian fluids with a free interface usually
consists of Navier--Stokes equations in each fluid domain with
kinematic and force balance boundary conditions on the interface. On
the other hand, the so-called diffuse-interface model (or
phase-field model), recognizes micro-scale mixing of the
macroscopically immiscible fluids and the interface represents a
thin region with a steep transition property between two fluids (cf.
Anderson et al \cite{AMW}). Within this region, the fluid is mixed
and has to store certain ``mixing energy". The diffuse-interface
model can be viewed as a physically motivated level-set method that
describes the interface by a proper mixing energy. Compared with the
sharp-interface model, the diffuse-interface model can describe
topological transitions of interfaces (like pinchoff and
reconnection) in a natural way (cf. Lowengrub et al \cite{LT98}) and it
has many advantages in numerical simulations of the interfacial
motion (cf. \cite{YFLS04,LS03, FLSY05} and references therein).

The Marangoni effect was initially observed by
Thomson \cite{T1855} during the study of the interesting phenomenon
``tears of wine". Afterwards this phenomenon was defined in more
detail in Marangoni \cite{M1871} in terms of surface tension
gradients and named after the author. The Marangoni effect is a
phenomenon whereby mass transfer occurs due to differences in
surface tension. Such differences can either be attributed to
non-uniform distributions of surfactants (cf. Mendes-Tatsis and Agble
\cite{MA00}) or to the existence of temperature gradient in the
neighborhood of the interface (cf. Sterling and Scriven \cite{SS59}).
The latter is called the thermo-capillary convection or the 
Marangoni--Benard convection, which becomes more and more important
in the application of complex fluids, liquid-gas systems and
ocean-geophysical dynamics (cf. e.g., \cite{BB03, BB07, KT08, JN99, YFLS05}).

The conventional Marangoni--Benard convection for the mixture of two
Newtonian flows can be described by a sharp-interface model
involving the Boussinesq approximation (cf. e.g., Liu et al \cite{LSFY05})
\bea &&\rho(u_t+(u\cdot\nabla) u)+\nabla p-\nu \Delta u =-\rho_\theta g\mathbf{j}, \label{sharp1} \\
&&\nabla\cdot u=0, \label{sharp2} \\
&&\theta_t+u\cdot\nabla\theta=k\Delta\theta, \label{sharp3}
 \eea
  where $u$, $p$ and $\theta$ stand for the fluid velocity, the pressure, and the relative temperature (with respect to the reference background temperature $\theta_b$, which is assumed
to be a constant for the sake of simplicity), respectively. $\rho$
is the density of fluid mixture, $\nu$ is the viscosity, $g$ is the
gravitational acceleration, $\mathbf{j}$ is the upward direction and
$k>0$ is the thermal diffusion constant. We assume that the temperature-dependent
density $\rho_\theta$ is described by the Boussinesq approximation
 \be
 \rho_\theta=\rho(1-\alpha\theta),
 \ee
where $\alpha$ is the coefficient of thermal expansion. The
background density $\rho$ is assumed to be a constant and the
difference between the actual density and $\rho$ only contributes to
the buoyancy force. Interface conditions are given by
 \bea
 &&  l_t+u\cdot \nabla l=0, \label{sharp4} \\
 &&  [T]\cdot\mathbf{n}=-\sigma H \mathbf{n}+ (\tau\cdot\nabla \sigma)\tau,\label{sharp5}
 \eea
 where $l$ stands for the interface length of the mixture. The kinematic condition \eqref{sharp4} indicates that  the surface ($l=0$) evolves with the fluid. Equation \eqref{sharp5} is the balance
of forces on the interface, where $H$ is the curvature of the
interface, $[T]$ is the jump of the stress across the interface, $\sigma$ is the surface tension, $\tau$ is the tangential direction on the interface and $\mathbf{n}$ is
the normal direction.

In this paper, we shall investigate a diffuse-interface model, which was used to describe the thermo-induced Marangoni effects in the mixture of two incompressible Newtonian
fluids. A phase-field variable $\phi$ is introduced as the
volume fraction to demarcate the two species and to indicate the
location of the interface. The region $\{x : \phi(x,t)=1\}$ is occupied by fluid 1 while $\{x : \phi(x,t)=-1\}$ is occupied by fluid 2. The
interface is represented by $\{x : \phi(x,t)=0\}$, with a (fixed)
transition layer of thickness $\varepsilon$. In the
diffuse-interface approach, one usually introduces an elastic (mixing)
energy of Ginzburg--Landau type
 \be
 E(\phi)=\int_\Omega
\left[\frac12|\nabla \phi|^2+F(\phi)\right]dx,\label{elastic}
 \ee
which represents the competition between the hydrophobic and
hydrophilic effects of the two different species. The physically
relevant energy density function $F$ that represents the two phases of the mixture usually has a double-well structure. A typical
example of $F$ is the so-called logarithmic potential (cf.
Cahn and Hillard \cite{58})
 \be F(\phi)=\gamma_1(1-\phi^2)+\gamma_2[(1+\phi)\ln (1+\phi)+(1-\phi)\ln
(1-\phi)],\quad \gamma_1,\gamma_2>0.\non
  \ee
  In practice, this (singular) potential is often replaced by a
smooth double-well polynomial approximation
 \[ F(\phi)=\frac{1}{4\varepsilon^2} (\phi^2-1)^2. \]
In this paper, we start with the simple case that the two components of the binary fluid have matched densities and the same constant viscosity $\nu$ as well as the same constant heat conductivity $k$. As a consequence, we consider the following system (cf. e.g., \cite{SLX09,LSFY05,
LS03}):
\bea &&\rho(u_t+(u\cdot\nabla) u)+\nabla p-\nu\Delta{u} \non\\
&&\ \
=-\nabla\cdot\left[\lambda(\theta)\nabla\phi\otimes\nabla\phi-\lambda(\theta)\Big(\frac12|\nabla\phi|^2+
F(\phi)\Big)\mathbb{I}
\right]-\rho_\theta g\mathbf{j}, \label{navier-stokes} \\
&&\nabla\cdot u=0, \label{incompressible} \\
&&\phi_t+u\cdot\nabla\phi=\gamma\big(\Delta\phi-F'(\phi)\big),
\label{phase}
\\
&&\theta_t+u\cdot\nabla\theta=k\Delta\theta, \label{temperature}
 \eea
  for $(x,t)\in \Omega\times (0,+\infty)$.  Here, we assume that $\Omega$ is a bounded domain in $\mathbb{R}^n$ ($n=2,
3$) with smooth boundary $\Gamma$. The usual Kronecker product is
denoted by $\otimes$, i.e., $(a\otimes b)_{ij}=a_ib_j$ for $a,b \in
\mathbb{R}^n$. The system \eqref{navier-stokes}--\eqref{temperature}
contains the Navier--Stokes equations, an Allen--Cahn type equation
for the phase-field function and an energy transport equation for
the temperature. The parameter $\gamma$ represents the microscopic
elastic relaxation time due to the presence of the microstructure of
the mixture. As $\gamma\to 0$, the internal dissipative mechanism
will disappear and the limiting equation is equivalent to the mass
transport equation for incompressible fluids (cf. \cite{LSFY05, YFLS05}). The temperature-dependent surface tension
coefficient $\lambda(\theta)$ is supposed to be
\be
 \lambda(\theta)=\lambda_0(a-b\theta),
 \non
 \ee
 where $\lambda_0>0$, $a>0$, $b \neq 0$ are constants. Usually $\lambda_0$ is assumed to be proportional
 to the interface length $\varepsilon$ (cf. Sun et al \cite{SLX09}). Numerical experiments have been made in the recent paper Sun et al
\cite{SLX09} to illustrate the role played by thermal energy in the
interfacial dynamics of two-phase flows due to the thermo-induced
surface tension heterogeneity on the interface. Their results suggest that the system
\eqref{navier-stokes}--\eqref{temperature} (and its generalizations)
turns out to be a suitable mathematical representation that reflects the thermo-induced
Marangoni effects in the mixture of fluids.

  We suppose that the system \eqref{navier-stokes}--\eqref{temperature} is subject to the initial conditions
 \be  u|_{t=0}=u_0(x) \mbox{ with } \nabla\cdot u_0=0, \ \ \ \phi|_{t=0}=\phi_0(x), \ \ \ 
 \theta|_{t=0}=\theta_0(x), \quad \ x\in \Omega. \label{IC}
  \ee
 Moreover, we assume no-slip boundary condition
on the velocity $u$
  \be
  u(x, t)=0, \quad  (x, t) \in \Gamma \times (0, +\infty), \label{BC1}
  \ee
  nonhomogeneous Dirichlet boundary condition on
 the phase function $\phi$
 \be
   \phi(x, t)=-1,  \quad  (x, t) \in \Gamma \times (0, +\infty), \label{BC2}
  \ee
 and homogeneous Dirichlet boundary condition on
 the temperature $\theta$
 \be
   \theta(x, t)=0,  \quad  (x, t) \in \Gamma \times (0, +\infty).
   \label{BC}
 \ee

  The goal of this paper is to provide a detailed mathematical
theory of existence, uniqueness, regularity and long-time behavior
of solutions to the non-isothermal
Navier--Stokes--Allen--Cahn system
\eqref{navier-stokes}--\eqref{BC}. First, we prove the
existence of global weak solutions in two and three spatial
dimensions (cf. Theorem \ref{theorem on
weak existence}). Next, we obtain the existence and uniqueness of a global strong solution in $2D$ (cf. Theorem \ref{strong2D}), a local strong solution in $3D$ (cf. Theorem \ref{locstrong}) and a global one provided that the viscosity $\nu$ is properly large (cf. Theorem \ref{theorem on large viscosity case}). The long-time dynamics of the system seems to be more complicated
than the uncoupled Navier--Stokes equations. We prove that as $t\to+\infty$, the phase function $\phi$ converges to
a solution of the stationary Allen--Cahn equation and the velocity $u$ as well as the temperature $\theta$ converges to zero (cf. Theorem \ref{theorem on long time behavior}). Stability for minimizers of the elastic energy is also discussed (cf. Theorem \ref{theorem on small data}). We just remark that our results can be easily extended to the cases with more general Dirichlet boundary conditions for the phase function. For instance, \eqref{BC2} can be replaced by $ \phi(x, t)|_\Gamma =h(x)$ on $\Gamma \times (0, +\infty)$, with $h(x)=\phi_0(x)|_\Gamma$ and $h(x)\in H^\frac32(\Gamma)$, $|h(x)|\leq 1$ (we refer to Lin and Liu \cite{LL95} for a similar situation for a simplified nematic liquid crystal system).

It is easy to
verify that for the isothermal case of system
\eqref{navier-stokes}--\eqref{BC} without the Boussinesq
approximation term, there is a dissipative energy law
 \bea
  && \frac{d}{dt}\left(\frac12\|u\|^2+
  \frac{\lambda}{2}\|\nabla \phi\|^2+ \lambda \int_\Omega F(\phi) dx\right)\non\\
  &=& -\nu\int_\Omega |\nabla{u}|^2dx-\lambda\gamma\int_\Omega |-\Delta \phi +F'(\phi)|^2 dx.\label{bbel}
 \eea
  This basic energy law reveals the underlying physics for the isothermal Navier--Stokes--Allen--Cahn system and it plays an important role in the study of well-posedness as well as long-time behavior of the system. We refer to the recent work \cite{GG10, GG10b} for detailed mathematical analysis on an isothermal NSAC system for the incompressible two-phase flows, where long-time
behavior of global solutions was analyzed within the theory of infinite-dimensional
dissipative dynamical systems (e.g., the existence of
global attractors, exponential attractors, trajectory attractors and
convergence to single equiblibria).

However, in our present
case, the surface tension parameter $\lambda$ in \eqref{navier-stokes} depends on the temperature such that it is no longer a constant. Moreover, the Boussinesq approximation is also applied. These bring us challenges in mathematical analysis of the system. We are not able to derive the same dissipative energy equality as for the isothermal case. In
particular, the special cancelation between the induced stress term
in the momentum equation and the transport term in the phase-field equation is no longer valid (see Remark \ref{can} below). This relation is crucial to derive the 
dissipative energy law like \eqref{bbel} (cf. \cite{GG10, LL95,
YFLS04}).
 Nevertheless, taking advantage of proper maximum principles for the phase function and temperature, we show that
 if the initial temperature variation is not large (bounded in terms of coefficients of the system), we can derive an energy \emph{inequality} for the system
\eqref{navier-stokes}--\eqref{BC}, which reflects the dissipative
nature of the flow.

We remark that in the phase-field equation \eqref{phase}, the dynamics of the
phase function $\phi$ is assumed to be driven by a gradient flow of
Allen--Cahn type. In order to keep the conservation of overall volume
fraction, people usually assume that the internal dissipation is
described through a Cahn--Hillard equation (with convection) for $\phi$, which can be viewed as a gradient flow of the elastic
energy in the Sobolev space $H^{-1}$ (cf. \cite{LS03,LT98}). The resulting system is
nevertheless much more involved in mathematical analysis, because it contains a
fourth-order differential operator and thus the maximum principle for the phase function $\phi$ no longer holds (see, for instance, \cite{Ab,B,GG10a} for the
isothermal Navier--Stokes--Cahn--Hilliard system). It seems that our results cannot be extended to this case in a straightforward way. For instance, in the derivation of the dissipative energy inequality \eqref{basic energy inequality}, we rely on the $L^\infty$-estimate of $\phi$, which will no longer be available due to the lack of maximum principle. This problem will be studied in our future work.

The rest of the paper is organized as follows. In Section 2, we derive
an energy inequality that guarantees the dissipative nature of the
system and establish the existence of global weak solutions. In
Section 3, we discuss existence and uniqueness of strong solutions
in both $2D$ and $3D$. In Section 4, we study the long-time dynamics
and stability property of the system.

\section{Global weak solutions}\setcounter{equation}{0}
If $X$ is a real Hilbert space with inner product $(\cdot,\cdot)_X$,
then we denote the induced norm by $\|\cdot\|_X$. $X'$ indicates the
dual space of $X$ and $\langle\cdot, \cdot\rangle_{X',X}$ will denote the
corresponding dual product. We indicate by $\mathbf{X}$ the
vectorial space $X^n$ endowed with the product structure. For
simplicity, the scalar product in $L^2(\Omega)$ (also
$\mathbf{L}^2(\Omega)$) will be denoted by  $(\cdot,\cdot)$, and the
associated norm
 by $\|\cdot\|$. For two $n\times n$ matrices $M_1, M_2$, we denote $M_1 : M_2={\rm trace}(M_1 M_2^T)$.
Let $\mathcal{V}=C_0^\infty(\Omega, \mathbb{R}^n) \cap \{v :
\nabla\cdot v=0\}$. We denote $\mathbf{H}$ (respectively
$\mathbf{V}$) the closure of $\mathcal{V}$ in $\mathbf{L}^2$
(respectively $\mathbf{H}^1$).
 \be
\mathbf{H}=\{u\in \mathbf{L}^2: \nabla \cdot u=0, \ \ u\cdot
\mathbf{n}=0 \ \text{on}\ \Gamma\},\quad \mathbf{V}=\{u\in
\mathbf{H}_0^1: \nabla \cdot u=0\}.\non
 \ee
 $\mathbf{H}$ and $\mathbf{V}$ are Hilbert spaces with norms $\|\cdot\|$ and $\|\cdot\|_{\mathbf{H}^1}$, respectively.
We recall the Stokes operator $S: \mathbf{H}^2(\Omega) \cap
\mathbf{V} \rightarrow \mathbf{H}$ such that $Su=-\Delta u+\nabla\pi
\in \mathbf{H}$, for all $u \in \mathbf{H}^2(\Omega) \cap
\mathbf{V}$. $S^{-1}$ is a compact linear operator on $\mathbf{H}$
and $\|S\cdot\|$ is a norm on $D(S)$ that is equivalent to the
$\mathbf{H}^2$-norm. Then there exists a positive constant $C=C(n, \Omega)$, for which
(cf. Temam \cite{Te01})
 \be \|u\|_{\mathbf{H}^2}+\|\pi\|_{H^1
\backslash {\mathbb{R}}} \leq C\|Su\|. \label{Stokes II}
 \ee
In the following text, we denote by $C$, $C_i$ the generic constants
depending on $a$, $b$, $g$, $k$, $\lambda_0$, $\alpha$, $\gamma$,
$\Omega$, $\varepsilon$ and the initial data. Special dependence
will be pointed out explicitly in the text if necessary.

  Without loss of generality, we assume $\rho=1$ in the remaining part of this paper. Now we introduce the weak formulation of the initial boundary value problem
\eqref{navier-stokes}--\eqref{BC}:

\begin{definition}
For any $T \in (0, +\infty)$, the triple $(u, \phi, \theta)$
satisfying \bea &&u \in L^\infty(0, T; \mathbf{H}) \cap L^2(0, T;
\mathbf{V}), \non \\
&&\phi\in L^\infty(0, T; {H}^1\cap L^\infty) \cap L^2(0,
T; {H}^2),\quad |\phi|\leq 1, \ \text{a.e. in} \ \Omega\times[0,T],\non\\
&& \theta\in L^\infty(0, T; {H}_0^1\cap L^\infty) \cap L^2(0, T;
{H}^2),\non  \eea is called a weak solution of the problem
\eqref{navier-stokes}--\eqref{temperature} if
 the initial and boundary conditions \eqref{IC}--\eqref{BC} are satisfied and for a.e. $t\in (0, T)$,
 \bea
 && \langle u_t, v\rangle_{\mathbf{V}', \mathbf{V}}+ \int_\Omega (u\cdot\nabla)u\cdot v dx +\nu\int_\Omega \nabla{u} : \nabla{v} dx \non\\
 && \quad = \int_\Omega [\lambda(\theta)\nabla\phi\otimes\nabla\phi] : \nabla v dx+\alpha g\int_\Omega  \theta \mathbf{j} \cdot v dx,\quad \forall\,
 v\in \mathbf{V},\non\\
 && \phi_t+ u\cdot \nabla\phi=\gamma (\Delta \phi- F'(\phi)), \quad \text{a.e. in\ } \Omega, \non\\
 &&  \theta_t+ u\cdot \nabla\theta=k\Delta\theta,\quad \text{a.e. in\ } \Omega,\non\\
 && u|_{t=0}=u_0(x),\ \ \phi|_{t=0}=\phi_0(x), \ \ \theta|_{t=0}=\theta_0(x), \quad \text{in}\ \Omega.\non
 \eea
 \end{definition}
 \begin{remark}
  In order to derive the variational formulation for $u$,
  we use the following facts due to the incompressibility condition: for any $ v\in \mathbf{V}$, it holds
 \bea
 &&\int_\Omega \nabla p \cdot vdx= \int_\Omega \mathbf{j}\cdot v dx= 0,\non\\
 && \int_\Omega \left[\nabla \cdot \left(\lambda(\theta)\Big(\frac12|\nabla\phi|^2+ F(\phi)\Big)\mathbb{I}\right)\right]\cdot v\,dx
 = \int_\Omega \nabla \left[\lambda(\theta)\Big(\frac12|\nabla\phi|^2+ F(\phi)\Big)\right]\cdot v \, dx=0.\non
\eea
 \end{remark}

Next, we state the result on the existence of global-in-time weak
solutions:

\begin{theorem}[Existence of weak solutions] \label{theorem on weak existence}
Suppose $n=2, 3$. For any initial data $(u_0, \phi_0, \theta_0)$
$\in$ $\mathbf{H} \times (H^1(\Omega)\cap L^\infty(\Omega))\times
(H^1_0(\Omega)\cap L^\infty(\Omega))$ satisfying
 \be \|\phi_0\|_{L^\infty}\leq 1,\quad \|\theta_0\|_{L^\infty}\leq   \frac{1}{4C_1^2|b|}\sqrt{\frac{a\gamma\nu}{2\lambda_0}},\label{ini}
  \ee
where $C_1$ is a constant depending only on $n, \Omega$, the problem
\eqref{navier-stokes}--\eqref{BC} admits at least one global weak
solution such that \bea &&u \in L^\infty(0, +\infty; \mathbf{H})
\cap L^2_{loc}(0, +\infty;
\mathbf{V}), \non \\
&&\phi\in L^\infty(0, +\infty; {H}^1\cap L^\infty) \cap L^2_{loc}(0,
+\infty; {H}^2),\quad |\phi|\leq 1, \ \text{a.e. in} \ \Omega\times[0,+\infty),\non\\
&& \theta\in L^\infty(0, +\infty; {H}_0^1\cap L^\infty) \cap
L^2_{loc}(0, +\infty; {H}^2).\non \eea
\end{theorem}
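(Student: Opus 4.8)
The plan is to construct solutions via a Galerkin approximation scheme, combined with the crucial \emph{a priori} energy inequality that the paper promises to derive (the ``basic energy inequality'' \eqref{basic energy inequality} alluded to in the introduction). Before setting up any approximation, I would first establish the two maximum principles that the hypotheses are designed to exploit. For the temperature equation \eqref{temperature}, since $\theta$ solves a linear convection--diffusion equation with a divergence-free drift $u$ and homogeneous Dirichlet data, the standard parabolic maximum principle gives $\|\theta(t)\|_{L^\infty}\leq\|\theta_0\|_{L^\infty}$ for all $t$; in particular the smallness assumption \eqref{ini} propagates in time and keeps $\lambda(\theta)=\lambda_0(a-b\theta)$ bounded below by a positive constant (roughly $\lambda_0 a/2$) uniformly. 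For the phase equation \eqref{phase}, because $F$ is the smooth double-well with $F'(\pm1)=0$ pointing inward, together with the boundary condition $\phi|_\Gamma=-1$, the comparison principle yields $|\phi(t)|\leq 1$, which is exactly what \eqref{basic energy inequality} needs and what the $L^\infty$-bound on $\phi$ in the statement records.

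The heart of the argument is the energy inequality. I would test \eqref{navier-stokes} with $u$, the phase equation (after shifting by the boundary value, writing $\phi=\psi-1$ so that $\psi$ has homogeneous data) against $\lambda(\theta)(-\Delta\phi+F'(\phi))$, and assemble the analogue of \eqref{bbel}. The obstruction flagged in Remark \ref{can} is that the exact cancellation between the induced-stress term $\int\lambda(\theta)\nabla\phi\otimes\nabla\phi:\nabla u$ and the transport term $\int(u\cdot\nabla\phi)(-\Delta\phi+F'(\phi))$ fails once $\lambda$ depends on $\theta$, because differentiating produces extra terms carrying $\nabla\lambda(\theta)=-\lambda_0 b\,\nabla\theta$. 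These leftover terms are controlled by Cauchy--Schwarz and interpolation, producing a factor proportional to $|b|\,\|\theta_0\|_{L^\infty}$ times the dissipation $\nu\|\nabla u\|^2+\gamma\|-\Delta\phi+F'(\phi)\|^2$; the precise threshold in \eqref{ini}, namely $\|\theta_0\|_{L^\infty}\leq\frac{1}{4C_1^2|b|}\sqrt{a\gamma\nu/(2\lambda_0)}$, is exactly the condition under which these bad terms can be absorbed into the good dissipation, leaving a genuine energy \emph{inequality}. This yields uniform bounds for $u$ in $L^\infty(0,\infty;\mathbf{H})\cap L^2_{loc}(\mathbf{V})$ and for $\phi$ in $L^\infty(0,\infty;H^1)\cap L^2_{loc}(H^2)$; the temperature bounds in $L^\infty(H^1_0)\cap L^2_{loc}(H^2)$ follow from a separate energy estimate on \eqref{temperature} using the already-established drift bound.

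With these \emph{a priori} estimates in hand, I would run a Faedo--Galerkin procedure: project onto finite-dimensional subspaces spanned by eigenfunctions of the Stokes operator $S$ (for $u$) and of the Dirichlet Laplacian (for $\phi$ and $\theta$), obtaining a finite system of ODEs solvable locally in time by Cauchy--Lipschitz. The uniform bounds above, which hold for the approximations because the maximum principles and the energy inequality are stable under the discretization, extend the approximate solutions globally and provide weak-$*$ limits in the stated spaces. The passage to the limit in the nonlinear terms is then standard: I would derive uniform bounds on the time derivatives $u_t$, $\phi_t$, $\theta_t$ in suitable negative-order dual spaces, invoke the Aubin--Lions--Simon compactness lemma to get strong convergence of $u$, $\phi$, $\theta$ in $L^2_{loc}(0,\infty;\mathbf{L}^2)$ and of $\nabla\phi$, $\nabla\theta$ likewise, and use these to pass to the limit in the quadratic convection terms and in $\lambda(\theta)\nabla\phi\otimes\nabla\phi$ and $F'(\phi)$. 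The main technical obstacle throughout is the $\theta$-dependence of $\lambda$: it is what breaks the exact energy law, so the whole construction hinges on the smallness condition \eqref{ini} surviving in the Galerkin scheme and on handling the cross terms carefully enough that the absorption into dissipation is uniform in the discretization parameter.
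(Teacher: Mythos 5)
Your overall architecture (maximum principles $\Rightarrow$ smallness-absorbed energy inequality $\Rightarrow$ approximation $\Rightarrow$ compactness) matches the paper's, but there is one genuine gap in the approximation step. You propose a \emph{full} Faedo--Galerkin scheme, projecting $\phi$ and $\theta$ onto eigenfunctions of the Dirichlet Laplacian, and you assert that ``the maximum principles and the energy inequality are stable under the discretization.'' That assertion fails for the discretization you chose: a truncated expansion $\phi_m=\sum_{i\le m}c_i(t)e_i(x)$ does not in general satisfy $|\phi_m|\le 1$ pointwise, and likewise the projected $\theta_m$ does not inherit $\|\theta_m\|_{L^\infty}\le\|\theta_0\|_{L^\infty}$ --- comparison/maximum principles are destroyed by spectral projection. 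Since the entire absorption of the bad term $-b\lambda_0\int_\Omega\theta\,\nabla u:(\nabla\phi\otimes\nabla\phi)\,dx$ into the dissipation hinges on exactly these two $L^\infty$ bounds holding \emph{at the approximate level}, your scheme cannot close the uniform estimates. (There is also the secondary problem that Dirichlet eigenfunctions vanish on $\Gamma$ and so cannot represent the nonhomogeneous condition $\phi|_\Gamma=-1$ without a shift you only introduce in the energy computation.) The paper resolves this by a \emph{semi}-Galerkin scheme: only $u$ is projected onto the Stokes eigenspaces $\mathbf{V}_m$, while for each fixed $\tilde u_m\in C([0,T];\mathbf{V}_m)$ the equations for $\phi_m$ and $\theta_m$ are solved as genuine PDEs, so Lemmas \ref{mphi} and \ref{mtheta} apply verbatim to the approximations; existence of $u_m$ then comes from a Schauder fixed-point argument rather than Cauchy--Lipschitz.

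A smaller point: you propose testing the phase equation against $\lambda(\theta)(-\Delta\phi+F'(\phi))$. With a $\theta$-dependent weight this does not produce a clean time derivative of $\int_\Omega\lambda(\theta)\big(\tfrac12|\nabla\phi|^2+F(\phi)\big)dx$ without generating an additional term carrying $\lambda'(\theta)\theta_t$, which is only weakly controlled. The paper instead uses the \emph{constant} multiplier $-a\lambda_0(\Delta\phi-F'(\phi))$, cancels the $a$-part of the stress exactly, and treats the whole $-b\lambda_0\theta$ part of the stress as the perturbation to be absorbed; it also couples in the tests $-\zeta\Delta\theta$ and $2\omega\theta$ with the specific choice $\zeta=k|b|^2\lambda_0/(a\gamma)$ so that the residual $\|\theta\|^2$ and $\|\nabla u\|^2$ contributions close. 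Your heuristic identifies the right mechanism and the right role for \eqref{ini}, but as written the bookkeeping would not assemble into the inequality \eqref{basic energy inequality}.
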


\subsection{Dissipative energy inequality}
An important feature of problem \eqref{navier-stokes}--\eqref{BC} is that $\phi$ and $\theta$
satisfy the following weak maximum principles, which will be
useful in the derivation of the dissipative energy inequality for the system.
\begin{lemma} \label{mphi}
Suppose $u \in L^\infty(0, T; \mathbf{H}) \cap L^2(0, T;
\mathbf{V})$. If $\phi\in L^\infty(0,T; H^1\cap L^\infty)\cap L^2(0,
T; H^2)$ is the weak solution of the initial boundary value problem
 \bea
&& \phi_t+ u\cdot \nabla\phi=\gamma (\Delta \phi- F'(\phi)), \quad \text{a.e. in\ } \Omega,\non\\
&& \phi(x,t)|_\Gamma=-1, \quad (x, t)\in \Gamma\times (0,T), \non\\
&& \phi|_{t=0}=\phi_0(x)\in  H^1\cap L^\infty, \ \text{with}\ \
|\phi_0|\leq 1\ \text{a.e. in} \ \Omega,\non
 \eea
then  $|\phi(x,t)| \leq 1$, a.e. in $\Omega$ for each $t \in (0,
T)$.
\end{lemma}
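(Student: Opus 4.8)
The plan is to prove the maximum principle $|\phi(x,t)| \leq 1$ for the convective Allen--Cahn equation with the singular/polynomial structure of $F$. The key observation is that the equation $\phi_t + u\cdot\nabla\phi = \gamma(\Delta\phi - F'(\phi))$ combined with the boundary value $\phi|_\Gamma = -1$ admits the constants $\pm 1$ as a sub/supersolution barrier, provided $F'$ has the right sign near $\pm 1$. Since $F(\phi) = \frac{1}{4\varepsilon^2}(\phi^2-1)^2$, we have $F'(\phi) = \frac{1}{\varepsilon^2}\phi(\phi^2-1)$, so $F'(1) = F'(-1) = 0$ and, more importantly, $F'(\phi) > 0$ for $\phi > 1$ and $F'(\phi) < 0$ for $\phi < -1$. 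This sign condition is exactly what forces the dynamics to push $\phi$ back into $[-1,1]$.

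\noindent\textbf{Upper bound.} First I would establish $\phi(x,t) \leq 1$ via a Stampacchia-type truncation argument. Set $\psi := (\phi - 1)^+ = \max\{\phi - 1, 0\}$. Since $\phi|_\Gamma = -1 \leq 1$, we have $\psi|_\Gamma = 0$, so $\psi$ is an admissible test function lying in $H^1_0$. Testing the equation with $\psi$ and integrating over $\Omega$ gives
\be
\frac{1}{2}\frac{d}{dt}\|\psi\|^2 + \int_\Omega (u\cdot\nabla\phi)\psi\, dx = \gamma\int_\Omega (\Delta\phi)\psi\, dx - \gamma\int_\Omega F'(\phi)\psi\, dx. \non
\ee
On the set $\{\phi > 1\}$ we have $\nabla\psi = \nabla\phi$, so the diffusion term becomes $-\gamma\|\nabla\psi\|^2 \leq 0$ after integration by parts (boundary term vanishes since $\psi|_\Gamma=0$). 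The convection term vanishes because $\int_\Omega (u\cdot\nabla\phi)\psi\, dx = \int_\Omega u\cdot\nabla(\tfrac{1}{2}\psi^2)\, dx = -\frac{1}{2}\int_\Omega (\nabla\cdot u)\psi^2\, dx = 0$ using $\nabla\cdot u = 0$ and the no-slip/divergence-free structure (the boundary integral vanishes as $\psi|_\Gamma = 0$). Finally, on $\{\phi > 1\}$ we have $F'(\phi) \geq 0$ while $\psi \geq 0$, hence $-\gamma\int_\Omega F'(\phi)\psi\, dx \leq 0$. Combining, $\frac{d}{dt}\|\psi\|^2 \leq 0$, and since $\|\psi(0)\|^2 = \|(\phi_0-1)^+\|^2 = 0$ by the hypothesis $|\phi_0|\leq 1$, Gronwall yields $\psi \equiv 0$, i.e.\ $\phi \leq 1$ a.e.

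\noindent\textbf{Lower bound.} For $\phi \geq -1$ I would repeat the argument with $\zeta := (\phi + 1)^- = \min\{\phi + 1, 0\} = -\max\{-\phi-1,0\}$, which again vanishes on $\Gamma$ (since $\phi|_\Gamma = -1$ gives $\zeta|_\Gamma = 0$) and so is admissible. Testing the equation with $\zeta$, the convection term vanishes for the same divergence-free reason, the diffusion term contributes $-\gamma\|\nabla\zeta\|^2 \leq 0$, and on the set $\{\phi < -1\}$ we use $F'(\phi) < 0$ together with $\zeta < 0$ so that $-\gamma\int_\Omega F'(\phi)\zeta\, dx \leq 0$; initial data again give $\zeta(0) \equiv 0$, forcing $\phi \geq -1$ a.e.

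\noindent\textbf{Main obstacle.} The delicate point is rigor in the truncation argument at the level of the available weak regularity: the time-derivative term $\frac{1}{2}\frac{d}{dt}\|\psi\|^2$ must be justified via the chain rule for $(\phi-1)^+$ with $\phi \in L^2(0,T;H^2) \cap H^1(0,T;\cdots)$, which requires knowing $\phi_t$ lies in a dual space so that $\langle \phi_t, \psi\rangle$ is well-defined and equals $\frac{1}{2}\frac{d}{dt}\|\psi\|^2$ (a standard but nontrivial Lions--Magenes type lemma on $\frac{d}{dt}\|(\cdot)^+\|^2$). One must also confirm that $\psi = (\phi-1)^+ \in L^2(0,T;H^1_0)$ so all integrations by parts are legitimate, and that the nonlinear term $F'(\phi)\psi$ is integrable — which follows from $|\phi|\leq 1$ not yet being known, so I would first note that on the support $\{\phi>1\}$ the polynomial $F'(\phi)$ is locally bounded and, crucially, has the correct sign regardless of magnitude. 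If instead the singular logarithmic potential is used, $F'$ blows up as $\phi\to 1^-$ and the argument must be combined with an approximation scheme (e.g.\ Yosida regularization of $F'$) before passing to the limit; I would handle the smooth polynomial case directly and remark that the sign structure $F'(\pm 1)=0$ with the correct monotonicity is all that the barrier argument needs.
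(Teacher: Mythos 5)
Your proof is correct and is essentially the argument the paper has in mind: the paper omits the details of Lemma \ref{mphi}, referring to the analogous maximum principle for the liquid crystal director equation in \cite{LL95,C09}, which is proved by exactly this kind of truncation argument exploiting the sign of $F'$ outside $[-1,1]$, the divergence-free structure of $u$, and the compatibility of the boundary value $-1$ with both truncations. Your identification of the remaining technical point (the Lions--Magenes chain rule for $\langle\phi_t,(\phi-1)^+\rangle$ at the stated regularity) is apt, and the available regularity $\phi\in L^2(0,T;H^2)$ with $\phi_t$ in a suitable dual space makes it routine.
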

\begin{proof} The proof is similar to that for the liquid crystal system
(cf. e.g., \cite{LL95,C09}), so we omit the details here.
\end{proof}
\begin{lemma} \label{mtheta}
Suppose $u \in L^\infty(0, T; \mathbf{H}) \cap L^2(0, T;
\mathbf{V})$. If $\theta\in L^\infty(0,T; H^1_0\cap L^\infty)\cap
L^2(0, T; H^2)$ is the weak solution of  the initial boundary value
problem
 \bea
&&\theta_t+ u\cdot \nabla\theta=k\Delta \theta,\quad \text{a.e. in\ } \Omega,\non\\
&&\theta|_\Gamma=0, \quad (x, t)\in \Gamma\times (0,T), \non\\
&& \theta|_{t=0}=\theta_0(x)\in H^1_0\cap L^\infty,\non
 \eea
 then $\|\theta(t)\|_{L^\infty}\leq \|\theta_0\|_{L^\infty}$ for every $t\in (0,T)$.
\end{lemma}
\begin{proof}
 Multiplying the equation by $|\theta|^{q-1}\theta$ ($q>1$), integrating over $\Omega$, we get
 \be
 \frac{1}{1+q}\frac{d}{dt}\int_\Omega |\theta|^{1+q} dx + \frac{1}{1+q}\int_\Omega u\cdot \nabla |\theta|^{q+1} dx +\frac{4(q-1)}{(1+q)^2}\int_\Omega k\left|\nabla(|\theta|^\frac{q-1}{2}\theta)\right|^2dx=0,\non
 \ee
 which implies that
 \be
 \|\theta(t)\|_{L^q}\leq \|\theta_0\|_{L^q}\leq |\Omega|^\frac{1}{1+q}\|\theta_0\|_{L^\infty}, \quad \forall\, q>1, \ \ t\in (0,T).\non
 \ee
 Taking the limit $q\to+\infty$, we arrive our conclusion.
 \end{proof}
In what follows, we derive a dissipative energy inequality, which
turns out to be crucial in the study of well-posedness as well as long-time
dynamics of the problem \eqref{navier-stokes}--\eqref{BC}.

 \begin{proposition}[Dissipative energy inequality] \label{BEL} For $n=2, 3$, we assume that the initial phase function $\phi_0$ and the initial temperature
$\theta_0$ satisfy the assumption \eqref{ini}. Then there exist
constants $\zeta, \omega>0$ that depend only on $\Omega$ and
coefficients of the system such that if $(u, \phi, \theta)$ is a
smooth solution to the problem \eqref{navier-stokes}--\eqref{BC}, then
the following energy inequality holds:
 \be
\frac{d \mathcal{E}}{dt} \leq -\frac{\nu}{2}\|\nabla
u\|^2-a\lambda_0\gamma\|\Delta\phi-F'(\phi)\|^2-k\zeta\|\Delta\theta\|^2\leq
0, \quad \forall\, t>0,
 \label{basic energy inequality}
  \ee
  where
  \be \mathcal{E}(t)=
\|u(t)\|^2+a\lambda_0\|\nabla\phi(t)\|^2+2a\lambda_0\int_{\Omega}F(\phi(t))dx
+\zeta\|\nabla\theta(t)\|^2+\omega\|\theta(t)\|^2\geq 0.
 \label{def of total energy}
 \ee
\end{proposition}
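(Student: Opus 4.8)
The plan is to test the PDE system against the natural combination of multipliers suggested by the target energy $\mathcal{E}$, and to absorb the non-dissipative Marangoni and Boussinesq contributions using the $L^\infty$-bounds on $\phi$ and $\theta$ from Lemmas~\ref{mphi} and~\ref{mtheta} together with the smallness hypothesis \eqref{ini}. Concretely, I would test \eqref{navier-stokes} with $u$, test \eqref{phase} with the chemical potential $\mu:=-(\Delta\phi-F'(\phi))$ (after multiplying by $a\lambda_0$), and test \eqref{temperature} with a linear combination of $-\Delta\theta$ and $\theta$ with coefficients $\zeta,\omega$ to be fixed. The $u$-test gives $\tfrac12\tfrac{d}{dt}\|u\|^2+\nu\|\nabla u\|^2$ on the left (the convective term drops by incompressibility and the no-slip condition), and on the right the induced-stress term $\int_\Omega[\lambda(\theta)\nabla\phi\otimes\nabla\phi]:\nabla u\,dx$ and the buoyancy term $\alpha g\int_\Omega\theta\,\mathbf{j}\cdot u\,dx$.

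First I would handle the phase-field test. Multiplying \eqref{phase} by $a\lambda_0\mu$ and integrating by parts produces $\tfrac{d}{dt}\big(\tfrac{a\lambda_0}{2}\|\nabla\phi\|^2+a\lambda_0\int_\Omega F(\phi)\,dx\big)+a\lambda_0\gamma\|\mu\|^2$, plus a transport term $a\lambda_0\int_\Omega(u\cdot\nabla\phi)\mu\,dx$. In the isothermal constant-$\lambda$ case this transport term would \emph{exactly} cancel the induced-stress term from the $u$-equation (this is the cancellation recalled in \eqref{bbel} and flagged in Remark~\ref{can}). Here $\lambda=\lambda(\theta)$ is not constant, so the cancellation is only partial: after subtracting, I expect a leftover of the schematic form $\lambda_0 b\int_\Omega\theta\,(\nabla\phi\otimes\nabla\phi):\nabla u\,dx$ (and possibly a term involving $\nabla\theta$), which is precisely the obstruction to a clean energy equality. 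This leftover, the buoyancy term, and the cross terms from the temperature test are the error terms that must be controlled.

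Next I would extract dissipation from the temperature. Testing \eqref{temperature} with $-\Delta\theta$ yields $\tfrac{d}{dt}\tfrac{\zeta}{2}\|\nabla\theta\|^2+k\zeta\|\Delta\theta\|^2$ together with a convective term $\zeta\int_\Omega(u\cdot\nabla\theta)\Delta\theta\,dx$; testing with $\theta$ gives $\tfrac{d}{dt}\tfrac{\omega}{2}\|\theta\|^2+k\omega\|\nabla\theta\|^2$ with the convective term vanishing. The idea is to use the extra dissipation $k\omega\|\nabla\theta\|^2$ and $k\zeta\|\Delta\theta\|^2$ as a reservoir to absorb the troublesome terms. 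The main obstacle will be estimating the leftover Marangoni term and the convective temperature term $\zeta\int_\Omega(u\cdot\nabla\theta)\Delta\theta\,dx$, since these couple $\nabla u$, $\nabla\phi$, and derivatives of $\theta$ at top order. Here the $L^\infty$-control $\|\theta(t)\|_{L^\infty}\le\|\theta_0\|_{L^\infty}$ from Lemma~\ref{mtheta} and $|\phi|\le 1$ from Lemma~\ref{mphi} are decisive: they let me bound the coefficient $|\lambda(\theta)|$ and the $\theta$-prefactor by constants, after which Cauchy--Schwarz, Young's inequality, and the Gagliardo--Nirenberg/Agmon interpolation (using that $\phi\in L^2(0,T;H^2)$ so $\nabla\phi\otimes\nabla\phi$ is controlled in suitable norms, and $\|\nabla u\|$ is available from the viscous term) reduce everything to quantities of the form $\varepsilon\|\nabla u\|^2+\varepsilon\|\Delta\theta\|^2+\varepsilon a\lambda_0\gamma\|\mu\|^2$ plus lower-order terms.

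Finally I would fix the constants. The smallness condition \eqref{ini}, namely $\|\theta_0\|_{L^\infty}\le\frac{1}{4C_1^2|b|}\sqrt{a\gamma\nu/(2\lambda_0)}$, is calibrated exactly so that the prefactor $\lambda_0|b|\,\|\theta_0\|_{L^\infty}$ in front of the leftover Marangoni term is small enough that, after Young's inequality with the interpolation constant $C_1=C_1(n,\Omega)$, its contribution to $\|\nabla u\|^2$ and to $a\lambda_0\gamma\|\mu\|^2$ is strictly dominated by half of the corresponding dissipation; this is where the $\tfrac12$ and the coefficient $a$ in \eqref{basic energy inequality} come from. I would choose $\zeta$ first (small enough that the convective $\Delta\theta$-term is absorbed by $\tfrac{k\zeta}{2}\|\Delta\theta\|^2$ and by the $\|\nabla u\|^2$ margin), then choose $\omega$ large relative to $\zeta$ so that the combined temperature functional $\zeta\|\nabla\theta\|^2+\omega\|\theta\|^2$ stays comparable to a genuine norm and supplies enough $\|\nabla\theta\|^2$ dissipation to mop up the remaining cross terms; the nonnegativity $\mathcal{E}\ge 0$ in \eqref{def of total energy} follows since $F\ge 0$ for the polynomial potential and $|\phi|\le1$ keeps $\int_\Omega F(\phi)\,dx$ finite. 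Summing the three tested identities and discarding the now-absorbed error terms then yields precisely \eqref{basic energy inequality}. The delicate point throughout is bookkeeping the interpolation constants so that the single explicit bound \eqref{ini} suffices simultaneously for all error terms.
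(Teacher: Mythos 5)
Your proposal is correct and follows essentially the same route as the paper: identical multipliers ($u$, $a\lambda_0\mu$ with $\mu=-(\Delta\phi-F'(\phi))$, and $-\zeta\Delta\theta$ plus $\omega\theta$), the same identification of the leftover Marangoni term $-b\lambda_0\int_\Omega\theta\,\nabla u:(\nabla\phi\otimes\nabla\phi)\,dx$ as the obstruction after partial cancellation, and the same absorption strategy via the maximum principles of Lemmas~\ref{mphi}--\ref{mtheta}, the Gagliardo--Nirenberg bound on $\|\nabla\phi\|_{\mathbf{L}^4}$, Young's inequality, and the smallness condition \eqref{ini}. The paper fixes $\zeta=k|b|^2\lambda_0/(a\gamma)$ explicitly (rather than ``small'') and then sets $\omega$ so that the $\|\nabla\theta\|^2$ dissipation from the $\theta$-test exactly dominates the accumulated $\|\theta\|^2$ error terms via Poincar\'e, but this is only a matter of bookkeeping and your ordering of the constant choices is the same.
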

\begin{proof} Multiplying \eqref{navier-stokes} with $u$, \eqref{phase} with
$-a\lambda_0(\Delta\phi-F'(\phi))$, \eqref{temperature} with
$-\zeta\Delta\theta$ ($\zeta>0$ is a constant to be determined
later), respectively, adding them up and integrating over $\Omega$,
we have
  \bea
&&\frac12\frac{d}{dt}\Big(\|u\|^2+a\lambda_0\|\nabla\phi\|^2+2a\lambda_0\int_{\Omega}F(\phi)dx+\zeta\|\nabla\theta\|^2
 \Big)\non\\
 &&\ +\nu\|\nabla{u}\|^2+a\gamma\lambda_0\|\Delta\phi-F'(\phi)\|^2+k\zeta\|\Delta\theta\|^2 \non\\
&=&\int_{\Omega}\big(\lambda(\theta)\nabla\phi\otimes\nabla\phi\big):
\nabla u\,dx+\alpha\int_{\Omega}\theta\, g\mathbf{j} \cdot
u\,dx+\zeta\int_{\Omega}(u\cdot\nabla)\theta\Delta\theta \,dx \non\\
&& + a\lambda_0\int_{\Omega}(u\cdot\nabla)\phi(\Delta\phi-F'(\phi)) \,dx\non\\
&:=& J_1+J_2+J_3+J_4.\non
 \eea
 In the following we just treat the case $n=3$, while the
case $n=2$ is similar. Recall the Gagliardo--Nirenberg
inequality
$$ \|\nabla \phi\|_{\mathbf{L}^4}\leq C_1(\|\Delta \phi\|^\frac12\|\phi\|_{L^\infty}^\frac12+\|\phi\|_{L^\infty}), \quad \forall\, \phi\in H^2,$$
where $C_1=C_1(n, \Omega)$ depends only on $n$ and $\Omega$.
Combining it with the Poincar\'e inequality and the Young inequality, we
deduce that
\bea && J_1+J_4\non\\
&=&
-a\lambda_0\int_{\Omega}u\cdot\nabla\left(\frac{|\nabla\phi|^2}{2}+F(\phi)\right)dx
-b\lambda_0\int_{\Omega} \theta \nabla u:(\nabla\phi\otimes \nabla\phi)\,dx\non\\
&\leq& |b|\lambda_0\|\theta\|_{L^\infty}\|\nabla
u\|\|\nabla\phi\|_{\mathbf{L}^4}^2  \non\\
&\leq& \frac{\nu}{4}\|\nabla
u\|^2+\frac{|b|^2\lambda_0^2}{\nu}\|\theta\|_{L^\infty}^2\|\nabla\phi\|_{\mathbf{L}^4}^4
\non\\
&\leq& \frac{\nu}{4}\|\nabla
u\|^2+\frac{8C_1^4|b|^2\lambda_0^2}{\nu}\|\theta\|_{L^\infty}^2\|\Delta\phi\|^2\|\phi\|_{L^\infty}^2
+\frac{8C_1^4|b|^2\lambda_0^2}{\nu}\|\theta\|_{L^\infty}^2 \non\\
&\leq& \frac{\nu}{4}\|\nabla
u\|^2+\frac{16C_1^4|b|^2\lambda_0^2}{\nu}\|\theta\|_{L^\infty}^2(\|\Delta\phi-F'(\phi)\|^2+\|F'(\phi)\|^2)\|\phi\|_{L^\infty}^2
\non\\
&& +\frac{8C_1^4|b|^2\lambda_0^2}{\nu}\|\theta\|_{L^\infty}^2.\non \eea
 Then by Lemmas \ref{mphi}, \ref{mtheta} and the assumption \eqref{ini}, we obtain that
 \bea
 &&
 \frac{16C_1^4|b|^2\lambda_0^2}{\nu}\|\theta\|_{L^\infty}^2\|\Delta\phi-F'(\phi)\|^2\|\phi\|_{L^\infty}^2
 \leq \frac{a\lambda_0\gamma}{2}\|\Delta\phi-F'(\phi)\|^2,\non
\eea
 \bea
 \frac{16C_1^4|b|^2\lambda_0^2}{\nu}\|\theta\|_{L^\infty}^2\|F'(\phi)\|^2\|\phi\|_{L^\infty}^2
 &\leq& \frac{16C_1^4C_2|b|^2\lambda_0^2|\Omega|^2}{\nu\varepsilon^4}\|\Delta \theta\|^\frac32\|\theta\|^\frac12\non\\
 &\leq& \frac{k\zeta}{8}\|\Delta \theta\|^2
 +\frac{54\cdot16^4C_1^{16}C_2^4|b|^8\lambda_0^8|\Omega|^8}{\nu^{
 4}\varepsilon^{16}k^3\zeta^3}\|\theta\|^2,\non \eea
and \bea
\frac{8C_1^4|b|^2\lambda_0^2}{\nu}\|\theta\|_{L^\infty}^2
  &\leq&
 \frac{8C_1^4C_2|b|^2\lambda_0^2}{\nu}\|\Delta\theta\|^{\frac32}\|\theta\|^{\frac12}
 \non\\
 &\leq&\frac{k\zeta}{8}\|\Delta\theta\|^2+\frac{54\cdot 8^4C_1^{16}C_2^4|b|^8\lambda_0^8}{\nu^{
 4}k^3\zeta^3}\|\theta\|^2,\non
 \eea
where $C_2$ depends only on $\Omega$. As a result,
 \be
 J_1+J_4\leq \frac{\nu}{4}\|\nabla
u\|^2+\frac{a\lambda_0\gamma
}{2}\|\Delta\phi-F'(\phi)\|^2+\frac{k\zeta}{4}\|\Delta \theta\|^2+
\frac{C_3}{\zeta^3}\|\theta\|^2,\non
 \ee
 with
 \be
 C_3=\frac{54\cdot 16^4C_1^{16}C_2^4|b|^8\lambda_0^8|\Omega|^8+54\cdot 8^4\varepsilon^{16}C_1^{16}C_2^4|b|^8\lambda_0^8}{\nu^{
 4}\varepsilon^{16}k^3}.\non
 \ee
Next, by the Poincar\'e inequality
 \be
J_2\leq |\alpha||g|\|\theta\|\|u\| \leq
C_P|\alpha||g|\|\theta\|\|\nabla u\|  \leq \frac{\nu}{4}\|\nabla
u\|^2+\frac{C_P^2|\alpha|^2|g|^2}{\nu}\|\theta\|^2,\non
 \ee
where $C_P$ depends only on $\Omega$. For $J_3$, we have
 \bea
J_3&=&-\zeta\int_{\Omega}u
\cdot\nabla\left(\frac{|\nabla\theta|^2}{2}\right)dx
+\zeta\int_{\Omega} u\cdot [\nabla \cdot(\nabla\theta\otimes \nabla\theta)]\,dx\non\\
&=& -\zeta\int_{\Omega} \nabla u: (\nabla\theta\otimes \nabla\theta)\,dx\non\\
&\leq& \zeta\|\nabla u\|\|\nabla\theta\|_{\mathbf{L}^4}^2 \leq
C_1^2\zeta\|\nabla u\|\|\Delta\theta\|\|\theta\|_{L^\infty}
\non\\
&\leq&
\frac{k\zeta}{4}\|\Delta\theta\|^2+\frac{C_1^4\zeta}{k}\|\theta_0\|_{L^\infty}^2\|\nabla
u\|^2\non\\
&\leq&\frac{k\zeta}{4}\|\Delta\theta\|^2+\frac{a\gamma\nu\zeta}{4k|b|^2\lambda_0}\|\nabla
u\|^2.\non
 \eea
 Taking $$\zeta=\frac{k|b|^2\lambda_0}{a\gamma},$$
 we infer from the above estimates that \bea
&&\frac{d}{dt}\left(\|u\|^2+a\lambda_0\|\nabla\phi\|^2+2a\lambda_0\int_{\Omega}F(\phi)dx+\zeta\|\nabla\theta\|^2
 \right)\non\\
 &&\ \ +\frac{\nu}{2}\|\nabla u\|^2
 +\gamma a\lambda_0\|\Delta\phi-F'(\phi)\|^2+k\zeta\|\Delta\theta\|^2 \non\\
 & \leq& C_4\|\theta\|^2,\label{part1 of basic energy law}
\eea where
 $$ C_4=\frac{2C_3}{\zeta^3}+\frac{2C_P^2|\alpha|^2|g|^2}{\nu}.$$
  Multiplying
\eqref{temperature} by $2\omega\theta$,
$\omega=\frac{C_P^2C_4}{2k}>0$, integrating over $\Omega$, and using
the Poincar\'e inequality, we obtain
 \be
\omega\frac{d}{dt}\|\theta\|^2 = -2\omega k\|\nabla\theta\|^2 \leq
-\frac{2\omega k\|\theta\|^2}{C_P^2} = -C_4\|\theta\|^2.
 \label{part2 of basic energy law}
 \ee
Adding \eqref{part1 of basic energy law} with \eqref{part2 of basic
energy law}, we arrive at our conclusion.
 \end{proof}

 \br\label{can}
For the isothermal case of the system
\eqref{navier-stokes}--\eqref{BC} without the Boussinesq
approximation term, there is a special cancelation between the
induced stress term in the Navier--Stokes equations and the
convection term in the Allen--Cahn equation, which yields the
dissipative energy {\rm equality \eqref{bbel}}. However, for the
current non-isothermal system \eqref{navier-stokes}--\eqref{BC},
there exists an extra high-order term
$$-b\lambda_0\int_{\Omega} \theta \nabla u:(\nabla\phi\otimes
\nabla\phi)\,dx$$
containing velocity, phase function and temperature that cannot be eliminated (the Boussinesq
approximation is a lower-order term and is easier to handle). In order to overcome this difficulty, we introduce the smallness assumption \eqref{ini} and try to seek certain energy dissipative {\rm inequality} instead.
 \er

 \br It is worth mentioning that the conditions in
\eqref{ini} does not involve the interfacial parameter
$\varepsilon$.
 \er

\subsection{Proof of Theorem \ref{theorem on weak existence}.} The proof
is based on a semi-Galerkin method (see, for instance, Lin and Liu
\cite{LL95} for a simplified nematic liquid crystal system). Let $\{w_i\}$ ($i\in \mathbb{N}$) be an orthonormal
basis of $\mathbf{V}$ formed by the eigenvectors of the Stokes
problem
 $$ -\Delta w_i+\nabla P_i=\lambda_i w_i, \ \ \mbox{in} \ \Omega, \quad  w_i|_\Gamma=0, $$
with $\|w_i\|=1$ and $0 < \lambda_1 \leq \lambda_2 \leq \cdots \leq
\lambda_n \leq \cdots$ with $\lambda_n \rightarrow \infty$ as $n
\rightarrow +\infty$.

For every $m\in \mathbb{N}$, let $\mathbf{V}_m=span\{w_1, w_2,
\cdots, w_m\}$. We denote by ${\rm P}_m: \mathbf{H} \rightarrow
\mathbf{V}_m$ the orthogonal projection. Given $(u_0, \phi_0,
\theta_0)\in \mathbf{H} \times (H^1(\Omega)\cap
L^\infty(\Omega))\times (H^1_0(\Omega)\cap L^\infty(\Omega))$
satisfying \eqref{ini}, we consider the following approximate
problem: \bea
 && \langle \partial_tu_{m}, v_m\rangle_{\mathbf{V}', \mathbf{V}}+ \int_\Omega (u_m\cdot\nabla)u_m\cdot v_m dx
 +\nu\int_\Omega \nabla{u_m} : \nabla{v_m} dx \non\\
 && \quad = \int_\Omega [\lambda(\theta_m)\nabla\phi_m\otimes\nabla\phi_m] : \nabla v_m dx+\alpha g\int_\Omega  \theta_m \mathbf{j} \cdot v_m dx,\quad \forall\,
 v_m\in \mathbf{V}_m, \label{nsapp}\\
 && \partial_t\phi_{m}+ u_m\cdot \nabla\phi_m=\gamma (\Delta \phi_m- F'(\phi_m)), \quad \text{a.e. in\ } \Omega, \label{papp}\\
 &&  \partial_t\theta_{m}+ u_m\cdot \nabla\theta_m=k\Delta\theta_m,\quad \text{a.e. in\ } \Omega,\label{tapp}\\
 &&\phi_m(x, t)=-1, \ \ \theta_m(x, t)=0 \ \ \mbox{on} \
\Gamma, \label{BC-App}\\
 &&u_m|_{t=0}={\rm P}_mu_0(x), \ \ \phi_m|_{t=0}=\phi_0(x), \ \ \theta_m|_{t=0}=\theta_0(x). \label{IC-App}
 \eea
 Indeed, we observe that all the {\it a priori} bounds derived (formally) from the energy inequality \eqref{basic energy inequality} still hold for the approximate problem. If we fix $\tilde{u}_m \in C([0,T]; \mathbf{V}_m)$, then we can find
$\phi_m = \phi_m[\tilde{u}_m]$  and $\theta_m= \theta_m[\tilde{u}_m]$ solving \eqref{papp} and \eqref{tapp} (with $u_m=\tilde{u}_m$), respectively. Inserting $\phi_m$ and $\theta_m$ into the equation \eqref{nsapp}, we can find a solution $u_m={\cal T}[\tilde{u}_m]$ that defines a mapping $\tilde{u}_m \mapsto {\cal T}[\tilde{u}_m]$.
On account of the {\it a priori} bounds, we can easily show that ${\cal T}$ admits a fixed point
by means of the classical Schauder's argument on $(0,T_0)$, with $0< T_0 \leq T$.
Finally, applying again the {\it a priori} estimates, we are allowed to conclude that the approximate solutions can be
extended to the whole time interval $[0,+\infty)$ (cf. also
Ezquerra et al \cite[Appendix]{C09}). Since the {\it a priori} estimates of the approximate solution are uniform in  parameter $m$, then using a similar argument as in \cite[Section 2]{LL95}, we can pass to the limit $m\to +\infty$ and complete the proof of Theorem \ref{theorem on
weak existence}. The details are omitted here.

\begin{corollary}\label{low-estimate}
Suppose $n=2, 3$. Under the assumptions of Theorem \ref{theorem on
weak existence}, the weak solution $(u, \phi, \theta)$ to the
problem \eqref{navier-stokes}--\eqref{BC} satisfies \be
\|\phi(t)\|_{L^\infty}\leq 1,\quad \|\theta(t)\|_{L^\infty}\leq
\frac{1}{4C_1^2|b|}\sqrt{\frac{a\gamma\nu}{2\lambda_0}},\quad
\forall\, t\geq 0,\non
 \ee
and the energy inequality
 \be
\mathcal{E}(t)+ \int_0^{+\infty} \left(\frac{\nu}{2}\|\nabla
u\|^2+a\lambda_0\gamma\|\Delta\phi-F'(\phi)\|^2+k\zeta\|\Delta\theta\|^2\right)
dt\leq \mathcal{E}(0),\quad \forall\, t\geq 0,\non
 \ee
which yields the following uniform estimates:
 \bea
 && \|u(t)\|^2+\|\phi(t)\|_{H^1}^2+\|\theta(t)\|_{H^1}^2\leq M, \quad \forall\, t\geq 0,\non \\
 && \int_{0}^{+\infty}\big(\|\nabla u(t)\|^2+\|\Delta\phi(t)-F'(\phi(t))\|^2+\|\Delta\theta(t)\|^2 \big)dt \leq M,\non
 \eea
 where $M > 0$ is a constant depending on $\|u_0\|, \|\phi_0\|_{H^1}, \|\theta_0\|_{H^1}$, $\Omega$, and coefficients of the
 system.
\end{corollary}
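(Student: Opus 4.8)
The plan is to derive Corollary~\ref{low-estimate} as a direct consequence of the dissipative energy inequality established in Proposition~\ref{BEL} together with the two maximum principles of Lemmas~\ref{mphi} and \ref{mtheta}. The $L^\infty$-bounds are the easy part: since the initial data satisfy the smallness assumption \eqref{ini}, Lemma~\ref{mphi} gives $\|\phi(t)\|_{L^\infty}\leq 1$ and Lemma~\ref{mtheta} gives $\|\theta(t)\|_{L^\infty}\leq\|\theta_0\|_{L^\infty}\leq \frac{1}{4C_1^2|b|}\sqrt{\frac{a\gamma\nu}{2\lambda_0}}$ for all $t\geq 0$. These hold at the level of the Galerkin approximations (whose smoothness justifies the computations) and are preserved in the limit $m\to+\infty$, so they transfer to the weak solution.

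Next I would integrate the differential inequality \eqref{basic energy inequality} in time. Because the right-hand side of \eqref{basic energy inequality} is $\leq 0$ and explicitly controls the dissipation terms, integrating from $0$ to $t$ and then letting $t\to+\infty$ yields
\be
\mathcal{E}(t)+\int_0^{t}\left(\frac{\nu}{2}\|\nabla u\|^2+a\lambda_0\gamma\|\Delta\phi-F'(\phi)\|^2+k\zeta\|\Delta\theta\|^2\right)ds\leq\mathcal{E}(0),\non
\ee
and since every integrand is nonnegative the bound is uniform in $t$, giving the stated integral inequality over $(0,+\infty)$. The point to be careful about is that Proposition~\ref{BEL} is stated for \emph{smooth} solutions, so strictly speaking this energy identity is first obtained on the Galerkin level and then passed to the limit; the functional $\mathcal{E}$ is lower semicontinuous with respect to the weak convergences available (weak-$*$ in $L^\infty(0,T;\mathbf{H})$ for $u$, weak in $L^2(0,T;H^2)$ for $\phi,\theta$, and so on), which is what permits the inequality to survive in the limit.

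Finally I would extract the two uniform estimates. For the first line, $\mathcal{E}(t)\leq\mathcal{E}(0)$ controls $\|u(t)\|^2+a\lambda_0\|\nabla\phi(t)\|^2+\zeta\|\nabla\theta(t)\|^2+\omega\|\theta(t)\|^2$; combining the gradient control of $\phi$ with the already-established bound $\|\phi\|_{L^\infty}\leq1$ (hence $\|\phi\|_{L^2}^2\leq|\Omega|$) and applying Poincar\'e's inequality to $\theta\in H_0^1$ upgrades these to full $H^1$-norms, yielding $\|u(t)\|^2+\|\phi(t)\|_{H^1}^2+\|\theta(t)\|_{H^1}^2\leq M$. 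For the second line, the time integral above directly bounds $\int_0^{+\infty}\|\nabla u\|^2\,ds$, $\int_0^{+\infty}\|\Delta\phi-F'(\phi)\|^2\,ds$ and $\int_0^{+\infty}\|\Delta\theta\|^2\,ds$ by $\mathcal{E}(0)/c$ for a suitable constant $c>0$ depending on $\nu,a,\lambda_0,\gamma,k,\zeta$. The constant $M$ is then read off as a function of $\mathcal{E}(0)$, which in turn depends only on $\|u_0\|,\|\phi_0\|_{H^1},\|\theta_0\|_{H^1}$, $\Omega$ and the coefficients, as asserted.

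I expect no genuine obstacle here, since the corollary is essentially a bookkeeping consequence of Proposition~\ref{BEL}; the only subtle point worth flagging is the passage from the smooth/Galerkin setting of the energy inequality to the weak solution via lower semicontinuity, together with the use of $\|\phi\|_{L^\infty}\leq1$ and Poincar\'e's inequality to convert seminorm bounds into genuine $H^1$-bounds.
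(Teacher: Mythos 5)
Your proposal is correct and follows exactly the route the paper intends: the $L^\infty$-bounds come from Lemmas \ref{mphi} and \ref{mtheta} together with \eqref{ini}, the energy inequality comes from integrating \eqref{basic energy inequality} at the Galerkin level and passing to the limit by lower semicontinuity, and the uniform bounds follow since $F\geq 0$, $\|\phi\|_{L^\infty}\leq 1$ controls $\|\phi\|_{L^2}$, and Poincar\'e handles $\theta\in H^1_0$. The paper leaves these steps implicit (stating the corollary immediately after the Galerkin construction), so your write-up is simply a faithful filling-in of the same argument.
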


%
\section{Strong solutions }
\setcounter{equation}{0}

In this section, we prove the existence and uniqueness of strong
solutions to the problem \eqref{navier-stokes}--\eqref{BC}.
\begin{definition} \label{def of strong solution}
For any $T \in (0, +\infty]$, $u_0\in \mathbf{V}, \phi_0 \in
H^2(\Omega), \theta_0\in H^2(\Omega)\cap H^1_0(\Omega)$, we say that
the triple $(u, \phi, \theta)$ is a strong solution to the problem
\eqref{navier-stokes}--\eqref{BC}, if $(u, \phi, \theta)$ is a weak
solution and
 \be  u \in L^\infty(0, T; \mathbf{V}) \cap
L^2(0, T; \mathbf{H}^2), \quad \phi, \theta\in L^\infty(0, T; {H}^2)
\cap L^2(0, T; {H}^3).\non
 \ee
\end{definition}

Based on the semi-Galerkin scheme in the previous section, in order
to prove the existence of strong solutions, it suffices to derive
proper uniform higher-order estimates for the approximate solutions
and then pass to the limit $m\to +\infty$. We observe that the
approximate solutions satisfy the same basic energy inequality and
higher-order differential inequalities as smooth solutions of the
problem \eqref{navier-stokes}--\eqref{BC}. Thus, for the sake of
simplicity, all the calculations below will be carried out formally
for smooth solutions.

The main results of this section are as follows.

\begin{theorem} [Global strong solution in $2D$] \label{strong2D}
Suppose $n=2$. For any $u_0\in \mathbf{V}, \phi_0 \in H^2(\Omega),
\theta_0\in \big(H^1_0(\Omega) \cap H^2(\Omega)\big)$ satisfying the
assumption \eqref{ini}, the problem
\eqref{navier-stokes}--\eqref{BC} admits a unique global strong
solution such that
 \bea &&u \in {L}^{\infty}(0, +\infty;
\mathbf{V})\cap {L}^2_{loc}(0, +\infty; \mathbf{H}^2), \non\\
 &&\phi \in {L}^{\infty}(0, +\infty; {H}^2)\cap
{L}^2_{loc}(0, +\infty; {H}^3),    \non\\
&&\theta \in {L}^{\infty}(0, +\infty; {H}^2\cap H^1_0)\cap
{L}^2_{loc}(0, +\infty; {H}^3).\non
 \eea
\end{theorem}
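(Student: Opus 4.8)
The strategy is to run the semi-Galerkin scheme \eqref{nsapp}--\eqref{IC-App} already set up in Section 2 and, exactly as explained there for the weak solution, to derive \emph{uniform-in-time} higher-order a priori bounds on the approximations and then pass to the limit $m\to+\infty$; following the paper's convention I will carry out all estimates formally on smooth solutions. The point of departure is Corollary \ref{low-estimate}, which already provides $\|u(t)\|^2+\|\phi(t)\|_{H^1}^2+\|\theta(t)\|_{H^1}^2\leq M$ and $\int_0^{+\infty}(\|\nabla u\|^2+\|\Delta\phi-F'(\phi)\|^2+\|\Delta\theta\|^2)\,dt\leq M$, together with $|\phi|\leq1$ and the $L^\infty$-bound on $\theta$. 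Because $|\phi|\leq1$ makes $F'(\phi)$ bounded, the chemical-potential bound at once yields $\phi\in L^2(0,+\infty;H^2)$; the whole problem is therefore to upgrade the time-integrability of the dissipation into $L^\infty$-in-time control of $\nabla u$, $\Delta\phi$ and $\Delta\theta$, which is precisely where the two-dimensional structure is indispensable.

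For the velocity I would test \eqref{navier-stokes} with the Stokes operator $Su$, so that the pressure and the gradient parts of the stress and of the Boussinesq force drop out (owing to $\nabla\cdot u=0$) and, by \eqref{Stokes II}, $\|Su\|$ is equivalent to $\|u\|_{\mathbf{H}^2}$. This gives
\be
\frac12\frac{d}{dt}\|\nabla u\|^2+\nu\|Su\|^2=-((u\cdot\nabla)u,Su)+\big(\nabla\cdot[\lambda(\theta)\nabla\phi\otimes\nabla\phi],Su\big)+\alpha g(\theta\mathbf{j},Su).\non
\ee
In two dimensions the convection term is handled by the Ladyzhenskaya inequality $\|f\|_{\mathbf{L}^4}^2\leq C\|f\|\|\nabla f\|$ and interpolation, $|((u\cdot\nabla)u,Su)|\leq C\|u\|^{1/2}\|\nabla u\|\|Su\|^{3/2}\leq \frac{\nu}{4}\|Su\|^2+C\|u\|^2\|\nabla u\|^4$, so that after absorption the Gronwall coefficient in front of $\|\nabla u\|^2$ is $C\|u\|^2\|\nabla u\|^2$, which lies in $L^1(0,+\infty)$ by Corollary \ref{low-estimate}. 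The Boussinesq term is controlled by the $L^\infty$-bound on $\theta$, while the Marangoni stress, after differentiation, is dominated by $\|\nabla\theta\|_{\mathbf{L}^4}\|\nabla\phi\|_{\mathbf{L}^4}^2$ and $\|\theta\|_{L^\infty}\|\nabla\phi\|_{\mathbf{L}^\infty}\|\Delta\phi\|$, i.e. by the $H^2$-norms of $\phi$ and $\theta$; this is what couples the three second-order quantities together.

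The remaining $H^2$-estimates for $\phi$ and $\theta$ are produced by testing \eqref{phase} with $-\Delta\phi_t$ and \eqref{temperature} with $\Delta^2\theta$, which generate the dissipation $\frac{\gamma}{2}\frac{d}{dt}\|\Delta\phi\|^2+\|\nabla\phi_t\|^2$ and $\frac12\frac{d}{dt}\|\Delta\theta\|^2+k\|\nabla\Delta\theta\|^2$; here the nonhomogeneous condition $\phi|_\Gamma=-1$ requires care, but since it is a time-independent constant one has $\phi_t\in H^1_0$ and the boundary contributions generated by $F'(\phi)$ are controllable by trace and elliptic estimates. The reaction term $F'(\phi)$ and the transport terms $u\cdot\nabla\phi$, $u\cdot\nabla\theta$ are again bounded by the 2D multiplicative inequalities together with $|\phi|\leq1$ and $\|\theta\|_{L^\infty}\leq\frac{1}{4C_1^2|b|}\sqrt{a\gamma\nu/(2\lambda_0)}$. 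Adding the three inequalities produces a closed differential inequality
\be
\frac{d}{dt}\mathcal{E}_1\leq g(t)\,\mathcal{E}_1+h(t),\qquad \mathcal{E}_1:=\|\nabla u\|^2+\|\Delta\phi\|^2+\|\Delta\theta\|^2,\non
\ee
with $g,h\in L^1(0,+\infty)$ by Corollary \ref{low-estimate}. Applying the uniform Gronwall lemma (Temam \cite{Te01}) --- using that $\int_t^{t+1}\mathcal{E}_1\,ds$ is uniformly bounded --- yields the global bound $\mathcal{E}_1(t)\leq M'$ for all $t\geq0$, i.e. the asserted $L^\infty$-in-time regularity; integrating back over any $(0,T)$ and retaining the dissipation terms $\|Su\|^2$, $\|\nabla\Delta\theta\|^2$ and $\|\nabla\Delta\phi\|^2$ (the last recovered from $-\Delta\phi_t$ together with elliptic regularity) gives the $L^2_{loc}(0,+\infty;\mathbf{H}^2\times H^3\times H^3)$ statement. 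I expect the genuine obstacle to be the coupled high-order term $-b\lambda_0\int_\Omega\theta\,\nabla u:(\nabla\phi\otimes\nabla\phi)\,dx$ flagged in Remark \ref{can}: at the $H^2$-level it resurfaces as a triple product of $u$, $\phi$ and $\theta$ in their top admissible norms, and closing the estimate hinges on distributing its factors among the three dissipation terms via the two-dimensional inequalities and on the smallness built into \eqref{ini}.

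Uniqueness is obtained by a standard difference argument. Given two strong solutions $(u_i,\phi_i,\theta_i)$, $i=1,2$, emanating from the same data, I would write the equations for $\bar u=u_1-u_2$, $\bar\phi=\phi_1-\phi_2$, $\bar\theta=\theta_1-\theta_2$, test them respectively with $\bar u$, $\bar\phi$ and $\bar\theta$, and estimate each difference of nonlinear terms --- most delicately that of the Marangoni stresses --- using the already-established $L^\infty(0,T;\mathbf{V}\times H^2\times H^2)$ bounds and the 2D Sobolev embeddings. This leads to $\frac{d}{dt}\big(\|\bar u\|^2+\|\bar\phi\|^2+\|\bar\theta\|^2\big)\leq C(t)\big(\|\bar u\|^2+\|\bar\phi\|^2+\|\bar\theta\|^2\big)$ with $C\in L^1_{loc}(0,+\infty)$, and Gronwall's lemma, together with the vanishing of the differences at $t=0$, forces $(\bar u,\bar\phi,\bar\theta)\equiv0$.
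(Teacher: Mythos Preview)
Your existence argument differs from the paper's in a meaningful way. The paper (Lemma \ref{high2d}) does \emph{not} test \eqref{phase} with $-\Delta\phi_t$; instead it works with the chemical potential $\mu:=\Delta\phi-F'(\phi)$ and tracks the higher-order quantity $\mathcal{A}_1(t)=\|\nabla u\|^2+a\lambda_0\|\mu\|^2+\eta_1\|\Delta\theta\|^2$, for which it derives the quadratic inequality $\frac{d}{dt}\mathcal{A}_1\leq C(\mathcal{A}_1^2+\mathcal{A}_1)$. The choice of $\mu$ is natural for two reasons: $\mu|_\Gamma=0$ follows directly from the equation and the boundary conditions (so integration by parts is clean), and $\|\mu\|^2$ is precisely one of the dissipation terms in \eqref{basic energy inequality}, so $\mathcal{A}_1\in L^1(0,+\infty)$ is immediate from Corollary \ref{low-estimate}. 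Boundedness then follows from an elementary ODE lemma. Your route (targeting $\|\Delta\phi\|^2$ via the multiplier $-\Delta\phi_t$ and aiming for a linear Gronwall inequality with $L^1$ coefficients) can also be pushed through in 2D, but it sacrifices the partial cancellation between the induced stress in \eqref{navier-stokes} and the transport term in \eqref{phase} that the paper exploits when combining the $u$- and $\phi$-estimates (this is the origin of the terms $K_4$ and $K_6$ in \eqref{time derivative of A-part1}); consequently more triple products have to be handled by brute force.

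Your uniqueness argument, however, has a genuine gap. Testing the $\bar\phi$- and $\bar\theta$-equations with $\bar\phi$ and $\bar\theta$ produces only the dissipations $\|\nabla\bar\phi\|^2$ and $\|\nabla\bar\theta\|^2$, which are not enough to absorb the difference of the Marangoni stresses in the $\bar u$-equation. Concretely, the contribution $\int_\Omega\lambda(\theta_2)\,\nabla\bar\phi\otimes\nabla\phi_1:\nabla\bar u\,dx$ is bounded by $C\|\nabla\bar\phi\|_{\mathbf{L}^4}\|\nabla\phi_1\|_{\mathbf{L}^4}\|\nabla\bar u\|$, and in 2D $\|\nabla\bar\phi\|_{\mathbf{L}^4}$ interpolates between $\|\nabla\bar\phi\|$ and $\|\Delta\bar\phi\|$; without $\|\Delta\bar\phi\|^2$ available as dissipation you cannot close a \emph{homogeneous} Gronwall inequality (any application of Young's inequality leaves either a residual constant or a sublinear power of the difference). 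The paper (Proposition \ref{wsuniq}) avoids this by testing with $-a\lambda_0\Delta\bar\phi$ and $-\Delta\bar\theta$, obtaining the energy $\|\bar u\|^2+a\lambda_0\|\nabla\bar\phi\|^2+\|\nabla\bar\theta\|^2$ together with the dissipations $\|\nabla\bar u\|^2$, $\|\Delta\bar\phi\|^2$, $\|\Delta\bar\theta\|^2$; then every nonlinear difference can be estimated with coefficients in $L^1_{loc}$ using only the uniform $\mathbf{V}\times H^2\times H^2$ bound on strong solutions.
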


\begin{theorem} [Local strong solution in $3D$] \label{locstrong}
Suppose $n=3$.  For any $u_0\in \mathbf{V}, \phi_0 \in H^2(\Omega),
\theta_0\in \big(H^1_0(\Omega) \cap H^2(\Omega)\big)$ satisfying the
assumption \eqref{ini}, the problem
\eqref{navier-stokes}--\eqref{BC} admits a unique local strong
solution.
\end{theorem}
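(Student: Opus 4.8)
The plan is to realize the local strong solution as a limit of the semi-Galerkin approximations $(u_m,\phi_m,\theta_m)$ introduced in \eqref{nsapp}--\eqref{IC-App}, deriving higher-order a priori bounds that are uniform in $m$ but valid only on a short interval $[0,T_0]$, whose length depends only on $\|u_0\|_{\mathbf V}$, $\|\phi_0\|_{H^2}$, $\|\theta_0\|_{H^2}$ and the coefficients of the system. Throughout, the global bounds of Corollary \ref{low-estimate} --- in particular $|\phi_m|\le1$, the uniform $L^\infty$-bound on $\theta_m$, and $u_m\in L^\infty\mathbf H\cap L^2\mathbf V$, $\phi_m,\theta_m\in L^\infty H^1\cap L^2 H^2$ --- serve as the baseline from which the higher-order estimates are bootstrapped. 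Since the approximate fields solve \eqref{papp}--\eqref{tapp} exactly and $u_m$ is only projected, all the calculations can be carried out formally as for smooth solutions, exactly as announced before Theorem \ref{strong2D}.

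The heart of the argument is a single differential inequality for
\[
\mathcal{A}(t)=\|\nabla u\|^2+\|\Delta\phi\|^2+\|\Delta\theta\|^2.
\]
First I would test the momentum equation \eqref{navier-stokes} with the Stokes operator $Su$, so that the left-hand side furnishes $\frac12\frac{d}{dt}\|\nabla u\|^2+\nu\|Su\|^2$ (recall that $\|Su\|$ is equivalent to the $\mathbf H^2$-norm by \eqref{Stokes II}); in three dimensions the inertial term is dominated by $\frac{\nu}{8}\|Su\|^2+C\|\nabla u\|^6$, the Boussinesq term $\alpha g(\theta\mathbf{j},Su)$ is harmless, and the thermo-capillary stress $\nabla\cdot[\lambda(\theta)\nabla\phi\otimes\nabla\phi]$ is bounded in $\|\cdot\|$ against $\|Su\|$ using $|\phi|\le1$, the $L^\infty$-bound on $\theta$ and the Gagliardo--Nirenberg inequalities, producing controllable powers of $\|\Delta\phi\|$, $\|\nabla\Delta\phi\|$ and $\|\Delta\theta\|$. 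For the phase field I would test \eqref{phase} with $\Delta^2\phi$ and the energy equation \eqref{temperature} with $\Delta^2\theta$; here the integrations by parts are legitimized by the hidden boundary compatibility conditions obtained by evaluating the two equations on $\Gamma$, where $u=0$ and $\phi_t\equiv\theta_t\equiv 0$, namely $\Delta\phi|_\Gamma=F'(-1)$ (a constant, in fact $0$ for the polynomial potential) and $\Delta\theta|_\Gamma=0$. This yields the dissipations $\gamma\|\nabla\Delta\phi\|^2$ and $k\|\nabla\Delta\theta\|^2$, while the transport and potential terms on the right are absorbed modulo polynomial expressions in $\mathcal{A}$. Summing the three estimates and absorbing all good terms, I expect to reach
\[
\frac{d\mathcal{A}}{dt}+c\big(\|Su\|^2+\|\nabla\Delta\phi\|^2+\|\nabla\Delta\theta\|^2\big)\le C\big(1+\mathcal{A}\big)^3,
\]
and comparison with the scalar ODE $y'=C(1+y)^3$ then gives a uniform bound $\mathcal{A}(t)\le 2\mathcal{A}(0)+1$ on some $[0,T_0]$, together with the corresponding $L^2(0,T_0)$ bounds on the dissipation terms.

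The main obstacle is precisely the superlinear, strongly coupled nature of this right-hand side in three dimensions: the Navier--Stokes convection contributes the $\|\nabla u\|^6$ term and the thermo-capillary stress mixes the top-order norms of all three unknowns, so no cancellation reduces the cubic growth to a linear one and only a local-in-time bound is available, in contrast to the $2D$ case of Theorem \ref{strong2D}. Once the uniform estimates on $[0,T_0]$ are secured, passage to the limit is routine: they provide $u_m$ bounded in $L^\infty(0,T_0;\mathbf V)\cap L^2(0,T_0;\mathbf H^2)$ and $\phi_m,\theta_m$ in $L^\infty(0,T_0;H^2)\cap L^2(0,T_0;H^3)$, and after bounding the time derivatives in suitable dual spaces an Aubin--Lions argument yields strong convergence sufficient to identify all nonlinear terms, exactly as in the weak existence proof. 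Finally, for uniqueness I would take two strong solutions with the same data, subtract the equations, and test the differences $\bar u,\bar\phi,\bar\theta$ in $L^2$ (with $\bar\phi,\bar\theta$ vanishing on $\Gamma$); the strong-solution regularity, which places $\nabla u,\nabla\phi,\nabla\theta$ in spaces controlling the quadratic and coupling remainders, lets every error term be dominated by $C(t)\big(\|\bar u\|^2+\|\bar\phi\|^2+\|\nabla\bar\phi\|^2+\|\bar\theta\|^2\big)$ with $C\in L^1(0,T_0)$, so that Gronwall's lemma forces the differences to vanish on $[0,T_0]$.
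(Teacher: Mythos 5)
Your proposal follows essentially the same route as the paper: a higher-order differential inequality for a functional at the level of $\|\nabla u\|^2+\|\Delta\phi\|^2+\|\Delta\theta\|^2$ (the paper's $\mathcal{A}_2$ uses $\|\Delta\phi-F'(\phi)\|^2$, an equivalent choice since $|\phi|\le 1$ keeps $F'(\phi)$ bounded), closed by ODE comparison to get a bound on a short interval, followed by passage to the limit in the semi-Galerkin scheme and an energy-method uniqueness argument as in Proposition \ref{wsuniq}. The only substantive deviations are cosmetic: you forgo the stress/transport cancellation and obtain a cubic rather than quartic right-hand side, neither of which affects the local-in-time conclusion.
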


\begin{theorem} [Global strong solution in $3D$ under large viscosity] \label{theorem on large viscosity case}
Suppose $n=3$.  For any $u_0\in \mathbf{V}, \phi_0 \in H^2(\Omega),
\theta_0\in \big(H^1_0(\Omega) \cap H^2(\Omega)\big)$ satisfying the
assumption \eqref{ini}, if in addition, the lower bound of the
viscosity, i.e., $\nu$ is sufficiently large (cf. \eqref{large nu}),
then the problem \eqref{navier-stokes}--\eqref{BC} admits a unique
global strong solution.
\end{theorem}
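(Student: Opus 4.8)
The plan is to promote the \emph{local} strong solution provided by Theorem~\ref{locstrong} to a global one by a continuation argument. By that theorem there is a unique strong solution on a maximal interval $[0,T_{\max})$, and it suffices to establish a uniform \emph{a priori} bound on $\|\nabla u(t)\|$, $\|\phi(t)\|_{H^2}$ and $\|\theta(t)\|_{H^2}$ on $[0,T_{\max})$: were $T_{\max}$ finite, such bounds together with the local existence statement would allow restarting the solution past $T_{\max}$, a contradiction. As the text notes, all estimates may be carried out formally on smooth solutions (equivalently on the semi-Galerkin approximations) and then passed to the limit. Throughout I would freely use the basic energy inequality \eqref{basic energy inequality} and Corollary~\ref{low-estimate}, in particular $\sup_{t\ge0}\big(\|u\|^2+\|\phi\|_{H^1}^2+\|\theta\|_{H^1}^2\big)\le M$ and $\int_0^{+\infty}\big(\|\nabla u\|^2+\|\Delta\phi-F'(\phi)\|^2+\|\Delta\theta\|^2\big)\,dt\le M$, together with the maximum-principle bounds $|\phi|\le 1$ and $\|\theta\|_{L^\infty}\le\|\theta_0\|_{L^\infty}$ from Lemmas~\ref{mphi} and \ref{mtheta}.

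First I would derive the $H^2$-level estimates for the two scalar fields. Since the boundary datum for $\phi$ is the constant $-1$, I would work with $\tilde\phi=\phi+1$ (homogeneous Dirichlet) and perform a standard $H^2$ parabolic estimate on \eqref{phase}, e.g. testing the equation for $\tilde\phi$ with $\Delta^2\tilde\phi$. Because $|\phi|\le1$, the nonlinearities $F'(\phi),F''(\phi)$ are bounded, so the only genuine coupling is the transport term $u\cdot\nabla\phi$, controlled via $\|u\|_{L^6}\le C\|\nabla u\|$ and Gagliardo--Nirenberg interpolation of $\nabla\phi$. An entirely analogous computation for \eqref{temperature}, using $\|\theta\|_{L^\infty}\le\|\theta_0\|_{L^\infty}$, controls $\|\Delta\theta\|^2$. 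The outcome is that $\|\phi\|_{H^2}^2$, $\|\theta\|_{H^2}^2$ and the $L^2_t$-integrals of their $H^3$-norms are bounded in terms of $\sup_t\|\nabla u\|^2$, $\int\|\nabla u\|^2\,dt$ and the data; thus the phase and temperature regularities are slaved to the control of $\|\nabla u\|$.

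The core is the estimate for $u$ in $\mathbf V$. I would test \eqref{navier-stokes} with the Stokes operator $Su=-\Delta u+\nabla\pi$: the viscous term produces the dissipation $\nu\|Su\|^2$, the time derivative gives $\tfrac12\frac{d}{dt}\|\nabla u\|^2$, and—using \eqref{Stokes II} to pass between $\|Su\|$ and $\|u\|_{\mathbf H^2}$—the Boussinesq term $\alpha g(\theta\mathbf{j},Su)$ and the capillary stress $-\nabla\cdot[\lambda(\theta)\nabla\phi\otimes\nabla\phi]$ are absorbed by Young's inequality into a small multiple of $\nu\|Su\|^2$ plus the $\phi,\theta$-dissipations handled in the previous step. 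The genuinely dangerous term in three dimensions is the inertial one, for which
\[
|((u\cdot\nabla)u,Su)|\le C\|\nabla u\|^{3/2}\|Su\|^{3/2}\le \tfrac{\nu}{8}\|Su\|^2+\frac{C}{\nu^3}\|\nabla u\|^6 .
\]
Combining everything and using $\|Su\|^2\ge\lambda_1\|\nabla u\|^2$ yields, with $y(t):=\|\nabla u(t)\|^2$,
\[
y'(t)+\tfrac{\nu}{2}\|Su\|^2\le \frac{C}{\nu^3}\,y(t)^3+R(t),
\]
where $R$ is integrable in time, the couplings being absorbed into the dissipation integrals of Corollary~\ref{low-estimate}, so that $\int_0^{+\infty}R\,dt$ is bounded by $M$ times negative powers of $\nu$.

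The supercubic term $\frac{C}{\nu^3}y^3$ is precisely the obstruction that blocks global existence in $3D$ for general data, and I expect it to be the main difficulty; this is where the large-viscosity hypothesis enters, through a bootstrap/continuity argument analogous to classical large-viscosity results for $3D$ Navier--Stokes. Fix the threshold $\Lambda:=\nu^2\sqrt{\lambda_1/(2C)}$, chosen so that $y\le\Lambda$ forces $\frac{C}{\nu^3}y^3\le\tfrac{\nu\lambda_1}{2}y\le\tfrac{\nu}{2}\|Su\|^2$; then on any interval where $y\le\Lambda$ one has $y'+\tfrac{\nu\lambda_1}{2}y\le R(t)$, whence $\sup_t y(t)\le \|\nabla u_0\|^2+\int_0^{+\infty}R\,dt\le \|\nabla u_0\|^2+C'/\nu$ with $C'$ depending only on $M$ and the coefficients. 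The condition \eqref{large nu} is then imposed precisely so that $\|\nabla u_0\|^2+C'/\nu<\Lambda$; since $y$ is continuous and starts strictly below $\Lambda$, a standard continuation argument shows $y(t)<\Lambda$ for all $t\in[0,T_{\max})$, giving the sought uniform bound. Feeding this back into the second step yields uniform $H^2$-bounds for $\phi,\theta$ and the $L^2_{loc}$ bounds on $\|u\|_{\mathbf H^2}$, $\|\phi\|_{H^3}$, $\|\theta\|_{H^3}$ required by Definition~\ref{def of strong solution}. Hence $T_{\max}=+\infty$, the solution is global, and uniqueness is inherited from Theorem~\ref{locstrong}.
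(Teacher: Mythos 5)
Your high-level strategy -- derive a differential inequality for a top-order energy, fix a threshold below which the dangerous nonlinearity is absorbed by the viscous dissipation, and use the largeness of $\nu$ plus a continuity argument to keep the solution under the threshold for all time -- is exactly the strategy of the paper (Lemma \ref{proposition on high energy inequality 3D-1} and the iteration following it, in the spirit of \cite{LL95, LWX10}). However, there is a genuine gap in the way you organize the estimates. You decouple the problem by claiming that $\|\phi\|_{H^2}$ and $\|\theta\|_{H^2}$ are ``slaved'' to $\|\nabla u\|$, and that the resulting inequality for $y=\|\nabla u\|^2$ has the form $y'+\tfrac{\nu}{2}\|Su\|^2\le C\nu^{-3}y^3+R(t)$ with $R$ an \emph{integrable function of time alone}. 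This is not the case for this system. When you test \eqref{navier-stokes} with $Su$, the capillary stress produces (after integration by parts and Young's inequality) terms such as $C\|\nabla\theta\|_{\mathbf L^6}\|\nabla\phi\|_{\mathbf L^6}^2\|Su\|\le\tfrac{\nu}{12}\|Su\|^2+C\|\Delta\theta\|^2\|\Delta\phi-F'(\phi)\|^4+C\|\Delta\theta\|^2$, and the $H^2$-estimate for $\theta$ produces $\|\nabla u\|_{\mathbf L^4}^2\|\nabla\theta\|_{\mathbf L^4}^2\lesssim \|Su\|^{3/2}\|\nabla u\|^{1/2}\|\Delta\theta\|^2$ (the paper's $K_{2}$ and $K_7$). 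These are superlinear in the top-order norms of \emph{all three} unknowns simultaneously; they can neither be pushed into an integrable forcing $R(t)$ nor absorbed once and for all into the $\phi,\theta$ dissipation independently of $u$. This is precisely why the paper bundles everything into the single functional $\mathcal{A}_2=\|\nabla u\|^2+a\lambda_0\|\Delta\phi-F'(\phi)\|^2+\|\Delta\theta\|^2$ and tracks the $\nu$-dependence of every Young-inequality constant.

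A second, related problem is your threshold $\Lambda=\nu^2\sqrt{\lambda_1/(2C)}$, computed from the inertial term alone. The coupling terms must also be absorbed into the Allen--Cahn dissipation $a\lambda_0\gamma\|\nabla(\Delta\phi-F'(\phi))\|^2$ and the heat dissipation $k\|\nabla\Delta\theta\|^2$, whose coefficients do \emph{not} grow with $\nu$. The paper's bookkeeping shows that the bad terms enter as $M_1\nu^{1/2}\tilde{\mathcal{A}}_2\|Su\|^2$ and $M_1\nu^{-1/2}\tilde{\mathcal{A}}_2\|\nabla(\Delta\phi-F'(\phi))\|^2$, so the admissible threshold is $\tilde{\mathcal{A}}_2\le \nu^{1/2}\min\{a\lambda_0\gamma,1\}/M_1$, i.e.\ $O(\nu^{1/2})$ rather than $O(\nu^2)$, and the condition \eqref{large nu} is calibrated to this weaker threshold using $\int_t^{t+1}\mathcal{A}_2\,d\tau\le\tilde M$ from Corollary \ref{low-estimate}. (There is also a small slip in your ODE step: from $\tfrac{C}{\nu^3}y^3\le\tfrac{\nu}{2}\|Su\|^2$ you only get $y'\le R$, not $y'+\tfrac{\nu\lambda_1}{2}y\le R$; this is harmless for your purposes but symptomatic of the constants not being tracked.) To repair the proof you would need to either carry out the coupled estimate as in Lemma \ref{proposition on high energy inequality 3D-1}, or set up an honest bootstrap in which the $\phi,\theta$ bounds are quantified in terms of the assumed bound on $\|\nabla u\|$ and the loop is closed with explicit $\nu$-dependence.
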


\subsection{Two dimensional case}
First, we are going to derive a specific type of higher-order energy
inequality in the spirit of \cite{LL95}.

\begin{lemma}\label{high2d}
Suppose $n=2$. Let $(u, \phi, \theta)$ be a smooth solution to the
problem \eqref{navier-stokes}--\eqref{BC}. We introduce the quantity
 \be \mathcal{A}_1(t)= \|\nabla
 u(t)\|^2+a\lambda_0\|\Delta\phi(t)-F'(\phi(t))\|^2+\eta_1\|\Delta\theta(t)\|^2.
 \label{def of higher order energy}
 \ee
 Then the following differential inequality holds:
 \be
 \frac{d}{dt}\mathcal{A}_1(t)+\nu\|S u\|^2+a\lambda_0\gamma\|\nabla(\Delta\phi-F'(\phi))\|^2
 +\eta_1 k\|\nabla\Delta\theta\|^2
 \leq C(\mathcal{A}_1^2(t)+\mathcal{A}_1(t)), \quad \forall\, t > 0.
 \label{higher energy inequality}
 \ee
$\eta_1$ and $C$ are two positive constants which may depend on
$\|u_0\|$, $\|\phi_0\|_{H^1}$, $\|\theta_0\|_{H^1}$, $\Omega$, and
coefficients of the system.
\end{lemma}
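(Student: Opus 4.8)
My strategy is to test each of the three PDEs against the natural higher-order multiplier that "differentiates" the basic energy estimate by one order, mirroring the structure of Proposition \ref{BEL}. The plan is to multiply the Navier--Stokes equation \eqref{navier-stokes} by $Su$ (the Stokes operator applied to $u$), the phase equation \eqref{phase} by $-a\lambda_0\,\Delta(\Delta\phi - F'(\phi))$ (or more precisely to test the time-differentiated/Laplacian form so that $\|\nabla(\Delta\phi - F'(\phi))\|^2$ appears as the good dissipation term), and the temperature equation \eqref{temperature} by $\eta_1\,\Delta^2\theta$ (with $\eta_1>0$ a free parameter to be fixed at the end, analogous to the role of $\zeta$ and $\omega$ in Proposition \ref{BEL}). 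Adding these, the leading dissipative terms $\nu\|Su\|^2$, $a\lambda_0\gamma\|\nabla(\Delta\phi - F'(\phi))\|^2$, and $\eta_1 k\|\nabla\Delta\theta\|^2$ are generated on the left, and the task reduces to controlling the remaining cross terms on the right by the quadratic-plus-linear expression $C(\mathcal{A}_1^2 + \mathcal{A}_1)$.

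Next I would organize the right-hand side into three groups and estimate each. First, the convective and induced-stress terms arising from the momentum equation: the trilinear form $\int_\Omega (u\cdot\nabla)u\cdot Su\,dx$ is handled in $2D$ by the Ladyzhenskaya inequality $\|v\|_{L^4}\le C\|v\|^{1/2}\|\nabla v\|^{1/2}$ together with \eqref{Stokes II}, which gives a bound of the form $C\|\nabla u\|^2\|Su\| \le \tfrac{\nu}{8}\|Su\|^2 + C\|\nabla u\|^4$, producing an $\mathcal{A}_1^2$ contribution. The surface-tension term $\int_\Omega [\lambda(\theta)\nabla\phi\otimes\nabla\phi]:\nabla(Su)\,dx$ and the Boussinesq term are estimated similarly after integration by parts, using the Gagliardo--Nirenberg inequality for $\|\nabla\phi\|_{L^4}$ exactly as in Proposition \ref{BEL} and crucially invoking the $L^\infty$-bounds $|\phi|\le 1$ and the smallness of $\|\theta\|_{L^\infty}$ from Lemmas \ref{mphi}, \ref{mtheta}. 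Second, the phase-field convective term $\int_\Omega (u\cdot\nabla)\phi\cdot\Delta(\Delta\phi - F'(\phi))\,dx$ must be bounded by products of $\|\nabla u\|$, $\|\Delta\phi - F'(\phi)\|$ and their gradients, again absorbing the highest gradient into the dissipation and leaving $\mathcal{A}_1^2 + \mathcal{A}_1$ terms. Third, the temperature convective term $\eta_1\int_\Omega (u\cdot\nabla)\theta\,\Delta^2\theta\,dx$ is treated the same way, with $\eta_1$ chosen small enough that its unwanted contributions are dominated by the velocity and temperature dissipation already present.

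The main obstacle, as in the basic estimate, is the nonlinear coupling term carrying $b\lambda_0\,\theta\,\nabla\phi\otimes\nabla\phi$ at the higher-order level: differentiating the induced stress generates terms involving $\nabla\theta$, $\Delta\phi$, and $\nabla(Su)$ simultaneously, and these are genuinely triply nonlinear. I expect to control them only by exploiting the $L^\infty$ smallness of $\theta$ from Lemma \ref{mtheta} (guaranteed by \eqref{ini}) to pay for the extra derivative, in the same spirit that \eqref{ini} was used to close Proposition \ref{BEL}; the dimension $n=2$ is essential here, since the Ladyzhenskaya/Gagliardo--Nirenberg exponents that make these products absorbable into the dissipation fail to close in $3D$ (which is precisely why Theorem \ref{locstrong} is only local and Theorem \ref{theorem on large viscosity case} requires large $\nu$). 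Once every cross term is bounded either by a fraction of the dissipation terms on the left or by $C(\mathcal{A}_1^2 + \mathcal{A}_1)$, collecting constants and fixing $\eta_1$ yields \eqref{higher energy inequality}.
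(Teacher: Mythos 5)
Your proposal follows essentially the same route as the paper: test the momentum equation with $Su$, differentiate $\|\Delta\phi-F'(\phi)\|^2$ in time (equivalently, apply $\Delta-F''(\phi)$ to the phase equation), apply $\Delta$ to the temperature equation and test with $\Delta\theta$, then close with the $2D$ Ladyzhenskaya/Gagliardo--Nirenberg inequalities, the uniform bounds of Corollary \ref{low-estimate}, and a suitable choice of $\eta_1$. The only points worth flagging are that the integrations by parts rely on the boundary identities $(\Delta\phi-F'(\phi))\big|_\Gamma=0$ and $\Delta\theta\big|_\Gamma=0$ (which follow from the equations together with \eqref{BC1}--\eqref{BC}), and that the smallness of $\|\theta\|_{L^\infty}$ is not actually essential at this higher-order level, since quadratic growth in $\mathcal{A}_1$ is permitted on the right-hand side.
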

\begin{proof}
We observe that $-(\Delta u, u_t)=(Su, u_t)$, since $u_t \in
\mathbf{H}$. Besides, due to \eqref{BC2}, we infer that $\phi_t|_\Gamma =0$, then by \eqref{BC1} and the equation \eqref{phase}, we see that $\Delta\phi-F'(\phi)\big|_{\Gamma}=\phi_t+u\cdot \nabla \phi|_\Gamma=0$. Using equations
\eqref{navier-stokes} and \eqref{phase}, we compute that
 \bea &&\frac12\frac{d}{dt}\big(\|\nabla
u\|^2+a\lambda_0\|\Delta\phi-F'(\phi)\|^2\big)+\nu\|S{u}\|^2+a\lambda_0\gamma\|\nabla(\Delta\phi-F'(\phi))\|^2  \non\\
&=&-(u\cdot\nabla u, S{u})+ b\lambda_0(\nabla\cdot (\theta \nabla
\phi\otimes \nabla \phi), S u)
+\alpha g(\theta \mathbf{j}, S{u})\non\\
&& -a\lambda_0\big( \nabla\phi(\Delta\phi-F'(\phi)), \nabla\pi\big) -a\lambda_0\gamma(F''(\phi)(\Delta\phi-F'(\phi)),
\Delta\phi-F'(\phi))\non\\
 && -2a\lambda_0 \int_{\Omega}(\Delta\phi-F'(\phi))
\nabla_j
u_i\nabla_j\nabla_i\phi \,dx \non\\
&=&\sum_{m=1}^6 K_m. \label{time derivative of A-part1}
 \eea
In what follows, we proceed to estimate the right-hand side of
\eqref{time derivative of A-part1} by using the uniform estimates
obtained in Corollary \ref{low-estimate} and properties of the
Stokes operator.
 \bea K_1 &\leq&
\frac{\nu}{32}\|S u\|^2+C\|u\|_{\mathbf{L}^\infty}^2\|\nabla u\|^2\non\\
&\leq& \frac{\nu}{32}\|S u\|^2+C\|u\|\|\Delta u\|\|\nabla u\|^2\non\\
& \leq&
\frac{\nu}{16}\|S u\|^2+C\|\nabla u\|^4. \non
 \eea
For the second term $K_2$, we have
 \bea
 K_2&=&b\lambda_0\int_\Omega \nabla_j\theta \nabla_i\phi\nabla_j\phi (Su)_i dx
 +\frac{b\lambda_0}{2}(\theta\nabla |\nabla\phi|^2, Su)+b\lambda_0(\theta\Delta\phi\nabla \phi, Su)\non\\
 &=&b\lambda_0\int_\Omega \nabla_j\theta \nabla_i\phi\nabla_j\phi (Su)_i dx-\frac{b\lambda_0}{2}(|\nabla\phi|^2\nabla \theta, Su)
 +b\lambda_0(\theta\Delta\phi\nabla \phi, Su)\non\\
 &:=&K_{2a}+K_{2b}+K_{2c}.\non
 \eea
 \bea
 && K_{2a}+K_{2b}\non\\
 &\leq&
C\|\nabla\theta\|_{\mathbf{L}^6}\|\nabla\phi\|_{\mathbf{L}^6}^2\|S
u\|
\non\\
&\leq&\frac{\nu}{16}\|S
u\|^2+C\big(\|\nabla\Delta\theta\|^{\frac23}\|\nabla\theta\|^{\frac43}+\|\nabla\theta\|^2\big)
\big(\|\nabla\Delta\phi\|^{\frac43}\|\nabla\phi\|^{\frac83}+\|\nabla\phi\|^4\big) \non\\
&\leq&\frac{\nu}{16}\|S
u\|^2+C\big(\|\nabla\Delta\theta\|^{\frac23}\|\nabla\theta\|^{\frac43}+\|\nabla\theta\|^2\big)
(\|\nabla(\Delta\phi-F'(\phi))\|^{\frac43}+\|F''(\phi)\|_{L^\infty}^\frac43\|\nabla\phi\|^\frac43+C)  \non\\
&\leq&\frac{\nu}{16}\|S
u\|^2+\frac{a\lambda_0\gamma}{8}\|\nabla(\Delta\phi-F'(\phi))\|^2+C_5\|\nabla\Delta\theta\|^2+C(\|\nabla
\theta\|^2+\|\nabla \theta\|^6),\non
 \eea
 where $C_5$ is a constant depending on $\|u_0\|, \|\phi_0\|_{H^1}, \|\theta_0\|_{H^1}$, $\Omega$ and coefficients of the
 system.
 \bea
 K_{2c}&\leq&|b|\lambda_0\|\theta\|_{L^\infty}\|\Delta\phi\|\|\nabla\phi\|_{\mathbf{L}^\infty}\|S
u\|   \non\\
&\leq& C\|\theta\|_{L^\infty}\|\Delta\phi\|\|\nabla
\phi\|_{\mathbf{H}^2}^\frac12\|\nabla \phi\|^\frac12\|S
u\|\non\\
&\leq&\frac{\nu}{16}\|S
u\|^2+C\|\theta\|_{L^\infty}^2(\|\Delta\phi-F'(\phi)\|^2+C)(\|\nabla(\Delta\phi-F'(\phi))\|+\|\Delta\phi-F'(\phi)\|+C)
\non\\
&\leq& \frac{\nu}{16}\|S
u\|^2+C\|\Delta\phi-F'(\phi)\|^2\|\nabla(\Delta\phi-F'(\phi))\|\non\\
&& + C\|\theta\|\|\Delta \theta\|(\|\nabla(\Delta\phi-F'(\phi))\|+\|\Delta\phi-F'(\phi)\|^3+C)\non\\
&\leq&\frac{\nu}{16}\|S
u\|^2+\frac{a\lambda_0\gamma}{8}\|\nabla(\Delta\phi-F'(\phi))\|^2\non\\
&&+C(\|\Delta\phi-F'(\phi)\|^2+\|\Delta\phi-F'(\phi)\|^4+\|\Delta\theta\|^2+\|\Delta\theta\|^4).\non
 \eea
 The remaining terms can be estimated as follows
 \be K_3 \leq |\alpha||g|\|\theta\|\|S u\| \leq \frac{\nu}{16}\|S
u\|^2+C\|\Delta\theta\|^2. \non
 \ee
\bea K_4&\leq&
a\lambda_0\|\nabla\pi\|\|\nabla\phi\|_{\mathbf{L}^4}\|\Delta\phi-F'(\phi)\|_{L^4}
\non\\
&\leq&C\|S
u\|\big(\|\Delta\phi-F'(\phi)\|^{\frac12}+C\big)\|\Delta\phi-F'(\phi)\|^{\frac12}\|\nabla(\Delta\phi-F'(\phi))\|^{\frac12}\non
\\
&\leq&\frac{\nu}{16}\|S
u\|^2+\frac{a\lambda_0\gamma}{8}\|\nabla(\Delta\phi-F'(\phi))\|^2+C(\|\Delta\phi-F'(\phi)\|^2+\|\Delta\phi-F'(\phi)\|^4).\non
 \eea
  \be
 K_5\leq  a\lambda_0\gamma \|F''(\phi)\|_{L^\infty}\|\Delta\phi-F'(\phi)\|^2
\leq C\|\Delta\phi-F'(\phi)\|^2. \non
 \ee
\bea
K_6&=&2a\lambda_0\int_{\Omega}\nabla_i(\Delta\phi-F'(\phi))\nabla_j
u_i\nabla_j\phi\,dx   \non\\
&\leq&C\|\nabla(\Delta\phi-F'(\phi))\|\|\nabla
u\|_{\mathbf{L}^4}\|\nabla\phi\|_{\mathbf{L}^4}  \non\\
&\leq&\frac{a\lambda_0\gamma}{12}\|\nabla(\Delta\phi-F'(\phi))\|^2+C\|\nabla
u\|\|\Delta u\|\big(\|\Delta\phi-F'(\phi)\|+\|F'(\phi)\|
\big)\|\nabla\phi\| \non\\ &\leq&\frac{\nu}{16}\|S
u\|^2+\frac{a\lambda_0\gamma}{8}\|\nabla(\Delta\phi-F'(\phi))\|^2+C(\|\nabla
u\|^2+\|\nabla u\|^4+\|\Delta\phi-F'(\phi)\|^4). \non
 \eea
It follows from the above estimates and the Sobolev embedding that
\bea &&\frac{1}{2}\frac{d}{dt}(\|\nabla
u\|^2+a\lambda_0\|\Delta\phi-F'(\phi)\|^2 )+\frac{5\nu}{8}\|S
u\|^2+\frac{a\lambda_0\gamma}{2}\|\nabla(\Delta\phi-F'(\phi))\|^2  \non\\
&\leq& C_5
\|\nabla\Delta\theta\|^2+C(\|\nabla u\|^4+\|\Delta\phi-F'(\phi)\|^4+\|\Delta \theta\|^4)\non\\
&& +C(\|\Delta u\|^2+\|\Delta\phi-F'(\phi)\|^2+\|\Delta \theta\|^2).
\label{part 1 of high energy inequality}
  \eea
We infer from \eqref{BC} that $\theta_t|_\Gamma=0$, then it follows from \eqref{BC1} and the $\theta$-equation \eqref{temperature} that
$\Delta\theta|_{\Gamma}=0$. Applying $\Delta$ to both sides of \eqref{temperature}, and taking the $L^2$-inner product of the resultant with $\Delta\theta$, we obtain
 \be
\frac12\frac{d}{dt}\|\Delta\theta\|^2 +k\|\nabla\Delta\theta\|^2
 =-\int_{\Omega}\Delta(u\cdot\nabla\theta)\Delta\theta\,dx
:=K_7, \label{temperature-1}
 \ee
 such that
 \bea
K_7 &=&
\int_{\Omega}\nabla(u\cdot\nabla\theta)\cdot\nabla\Delta\theta\,dx
\non\\
&\leq&\frac{k}{8}\|\nabla\Delta\theta\|^2+\frac{2}{k}\|\nabla(u\cdot\nabla\theta)\|^2
\non\\
&\leq&\frac{k}{8}\|\nabla\Delta\theta\|^2+C\big(\|\nabla
u\|_{\mathbf{L}^4}^2\|\nabla\theta\|_{\mathbf{L}^4}^2+\|u\|_{\mathbf{L}^\infty}^2\|\Delta\theta\|^2\big)\non\\
&\leq& \frac{k}{8}\|\nabla\Delta\theta\|^2+ C\|\Delta
u\|^{\frac32}\|u\|^{\frac12}(\|\Delta\theta\|\|\nabla\theta\|+\|\nabla\theta\|^2)+
C\|\Delta
u\|\|u\|\|\Delta\theta\|^2\non\\
&\leq& \frac{k}{4}\|\nabla\Delta\theta\|^2+ \frac{\nu}{8kC_5}\|S
u\|^2+C\|\nabla{u}\|^2+ C\|\Delta\theta\|^4.\label{K_9}
 \eea
Hence, multiplying \eqref{temperature-1} by $\eta_1=kC_5$, and
adding the result to \eqref{part 1 of high energy inequality}, we
deduce our conclusion \eqref{higher energy inequality} from
\eqref{K_9}. The proof is complete.
\end{proof}

\textbf{Proof of Theorem \ref{strong2D}}. Since $u_0\in \mathbf{V}, \phi_0 \in H^2(\Omega), \theta_0\in \big(H^1_0(\Omega) \cap H^2(\Omega)\big)$, we have $\mathcal{A}_1(0)<+\infty$.
It follows from Corollary \ref{low-estimate} that
$\mathcal{A}_1(t)\in L^1(0, +\infty)$. Then we infer from Lemma \ref{high2d} and \cite[Lemma 6.2.1]{Z04} that $\mathcal{A}_1(t)$ is uniformly bounded for all time, which implies
  \be
 \|u(t)\|_{\mathbf{H}^1}+ \|\phi(t)\|_{H^2}+ \|\theta(t)\|_{H^2}+ \int_t^{t+1}(\| u(s)\|_{\mathbf{H}^2}^2+\|\phi(s)\|_{H^3}^2+\|\theta(s)\|_{H^3}^2) ds\leq C, \ \forall\, t\geq 0. \label{uniform boud}
 \ee
 where $C$ is a positive constant depending on $\|u_0\|_{\mathbf{H}^1}$, $\|\phi_0\|_{H^2}$, $\|\theta_0\|_{H^2}$, $\Omega$, and
 coefficients of the system.
 Then we can prove the existence of a global strong solution, which is actually unique
 by Proposition \ref{wsuniq}. The proof is complete.  \quad \quad \quad $\square$

For the weak solution, we still have $\mathcal{A}_1(t)\in L^1(0, +\infty)$. Then by
\eqref{higher energy inequality} and the uniform Gronwall lemma
(cf. Temam \cite[Lemma III.1.1]{Te97}), we conclude that for any $\delta>0$,
 \be
 \mathcal{A}_1(t+\delta)\leq C\left(1+\frac1\delta\right), \quad \forall\, t\geq
 0,\non
 \ee
 where $C$ is a positive constant depending on $\|u_0\|$, $\|\phi_0\|_{H^1}$, $\|\theta_0\|_{H^1}$, $\Omega$, and
 coefficients of the system. As a result,
 \begin{proposition}[Regularity of weak solutions in $2D$] \label{2dreg}
 When $n=2$, under the assumptions of Theorem \ref{theorem on weak existence}, any weak solution to problem \eqref{navier-stokes}--\eqref{BC} becomes a strong one for $t>0$ and the following estimate holds
 \be
 \|u(t)\|_{\mathbf{H}^1}^2+ \|\phi(t)\|_{H^2}^2+ \|\theta(t)\|_{H^2}^2 +\int_t^{t+1}(\|u(s)\|_{\mathbf{H}^2}^2+\|\phi(s)\|_{H^3}^2+\|\theta(s)\|_{H^3}^2) ds\leq \mathcal{D}(t), \  \forall\, t>0,\non
 \ee
  where $\mathcal{D}$ is a positive function depending on $\|u_0\|$, $\|\phi_0\|_{H^1}$, $\|\theta_0\|_{H^1}$, $\Omega$, and coefficients of the
  system. In particular, $\displaystyle\lim_{t\to 0^+}\mathcal{D}(t)=+\infty$.
 \end{proposition}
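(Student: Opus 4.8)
The plan is to exploit the higher-order differential inequality \eqref{higher energy inequality} of Lemma \ref{high2d} together with the global-in-time integrability of $\mathcal{A}_1$ furnished by Corollary \ref{low-estimate}, and to upgrade these into a pointwise bound valid away from $t=0$ by means of the uniform Gronwall lemma. The reason one cannot simply repeat the argument used for Theorem \ref{strong2D} is that for a \emph{weak} solution the initial data lie only in $\mathbf{H}\times H^1\times H^1$, so $\mathcal{A}_1(0)$ need not be finite and \eqref{higher energy inequality} cannot be integrated forward from the initial time against a finite starting value. The uniform Gronwall lemma is designed for exactly this situation, since it needs only the $L^1$-integrability of $\mathcal{A}_1$ plus the differential inequality, and it returns a bound at positive times that necessarily degenerates as the starting time approaches $0$. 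This degeneration is precisely what produces the claimed $\lim_{t\to 0^+}\mathcal{D}(t)=+\infty$.

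First I would record that Corollary \ref{low-estimate} gives $\int_0^{+\infty}\big(\|\nabla u\|^2+\|\Delta\phi-F'(\phi)\|^2+\|\Delta\theta\|^2\big)\,dt\le M$, which by the definition \eqref{def of higher order energy} of $\mathcal{A}_1$ means $\mathcal{A}_1\in L^1(0,+\infty)$ with $\int_0^{+\infty}\mathcal{A}_1(s)\,ds\le CM$. Discarding the nonnegative dissipation terms in \eqref{higher energy inequality} yields the scalar inequality $\frac{d}{dt}\mathcal{A}_1\le C(\mathcal{A}_1^2+\mathcal{A}_1)$, which I would put in the form $y'\le g\,y$ with $y=\mathcal{A}_1$ and $g=C(\mathcal{A}_1+1)$. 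Because $\mathcal{A}_1\in L^1$, both $\int_t^{t+\delta}y\,ds$ and $\int_t^{t+\delta}g\,ds\le C\big(\int_t^{t+\delta}\mathcal{A}_1\,ds+\delta\big)$ are bounded uniformly in $t\ge 0$, with constants depending only on the lower-order quantities in Corollary \ref{low-estimate}.

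Applying the uniform Gronwall lemma on the interval $[t,t+\delta]$ then gives $\mathcal{A}_1(t+\delta)\le\big(\frac{1}{\delta}\int_t^{t+\delta}\mathcal{A}_1\,ds\big)\exp\big(\int_t^{t+\delta}g\,ds\big)\le C\big(1+\frac{1}{\delta}\big)$ for all $t\ge 0$ and $\delta\in(0,1]$. For a fixed time $\tau>0$ I would instead apply the lemma on $[0,\tau]$ (i.e. take the starting time $0$ and $\delta=\tau$) to obtain $\mathcal{A}_1(\tau)\le\frac{CM}{\tau}\,e^{C(CM+\tau)}=:\mathcal{D}(\tau)$, a positive function with $\mathcal{D}(\tau)\to+\infty$ as $\tau\to0^+$; this gives the pointwise part of the estimate and shows in particular that $\mathcal{A}_1(t)<+\infty$ for every $t>0$, i.e. the weak solution is a strong solution on $(0,+\infty)$. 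The time-integrated part is obtained by integrating \eqref{higher energy inequality} over $[t,t+1]$ and bounding the right-hand side using both the just-proved pointwise bound on $\mathcal{A}_1$ and its $L^1$-integrability.

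Finally I would convert the control of $\mathcal{A}_1$ and of the dissipation into the stated Sobolev norms. The term $\|\nabla u\|$ controls $\|u\|_{\mathbf{H}^1}$ by the Poincar\'e inequality, and $\|Su\|$ controls $\|u\|_{\mathbf{H}^2}$ by \eqref{Stokes II}. Since $|\phi|\le1$ and $\|\theta\|_{L^\infty}\le\|\theta_0\|_{L^\infty}$ by Lemmas \ref{mphi} and \ref{mtheta}, the quantities $F'(\phi)$ and $F''(\phi)$ are bounded in $L^\infty$, so $\|\Delta\phi\|\le\|\Delta\phi-F'(\phi)\|+C$ and, by elliptic regularity with the boundary condition \eqref{BC2}, $\|\phi\|_{H^2}$ is controlled; likewise $\|\nabla(\Delta\phi-F'(\phi))\|$ together with these $L^\infty$ bounds controls $\|\phi\|_{H^3}$. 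The bounds on $\|\Delta\theta\|$ and $\|\nabla\Delta\theta\|$ yield $\|\theta\|_{H^2}$ and $\|\theta\|_{H^3}$ via elliptic regularity with \eqref{BC}. I expect the only genuinely delicate point to be the bookkeeping in the uniform Gronwall step, ensuring that the resulting $\mathcal{D}(t)$ depends only on the lower-order data $\|u_0\|,\|\phi_0\|_{H^1},\|\theta_0\|_{H^1}$, $\Omega$ and the coefficients, and that it exhibits the asserted blow-up as $t\to0^+$.
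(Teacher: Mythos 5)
Your proposal is correct and follows essentially the same route as the paper: the paper likewise combines the $L^1(0,+\infty)$-integrability of $\mathcal{A}_1$ from Corollary \ref{low-estimate} with the differential inequality \eqref{higher energy inequality} and the uniform Gronwall lemma (Temam, Lemma III.1.1) to obtain $\mathcal{A}_1(t+\delta)\leq C(1+\tfrac1\delta)$ for all $t\geq 0$, which is exactly your pointwise bound degenerating as $t\to 0^+$. The additional steps you spell out (integrating \eqref{higher energy inequality} over $[t,t+1]$ for the dissipation terms, and converting $\mathcal{A}_1$ and the dissipation into the stated Sobolev norms via \eqref{Stokes II}, the $L^\infty$ bounds of Lemmas \ref{mphi}--\ref{mtheta}, and elliptic regularity) are left implicit in the paper but are carried out correctly.
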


\subsection{Three dimensional case}
\begin{lemma}\label{high3da}
Suppose $n=3$. Let $(u, \phi, \theta)$ be a smooth solution to
problem \eqref{navier-stokes}--\eqref{BC}. For the quantity
$\mathcal{A}_2(t)$
 \be \mathcal{A}_2(t)= \|\nabla
u(t)\|^2+a\lambda_0\|\Delta\phi(t)-F'(\phi(t))\|^2+\|\Delta\theta(t)\|^2,
 \label{def of higher order energy3d}
 \ee
the following differential inequality holds
 \be
 \frac{d}{dt}\mathcal{A}_2(t)+\nu\|S u\|^2+a\lambda_0\gamma\|\nabla(\Delta\phi-F'(\phi))\|^2+k\|\nabla\Delta\theta\|^2
 \leq C_*(\mathcal{A}_2^4(t)+\mathcal{A}_2(t)), \quad \forall\, t > 0.
 \label{higher energy inequality1}
 \ee
 $C_*$ is a positive constant
which may depend on $\|u_0\|$, $\|\phi_0\|_{H^1}$,
$\|\theta_0\|_{H^1}$, $\Omega$, and coefficients of the system.
\end{lemma}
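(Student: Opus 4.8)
The plan is to mirror the structure of the proof of Lemma \ref{high2d} but track the fact that in three dimensions the Sobolev embeddings and interpolation inequalities are weaker, so the nonlinear right-hand side will accumulate a higher power of $\mathcal{A}_2$. I would differentiate the functional $\mathcal{A}_2(t)$ in \eqref{def of higher order energy3d} by testing \eqref{navier-stokes} with $Su$, \eqref{phase} with $-a\lambda_0(\Delta\phi-F'(\phi))$ after applying a suitable derivative, and the equation for $\theta$ with $\Delta^2\theta$ (equivalently, test the $\Delta$-applied temperature equation with $-\Delta\theta$ as in \eqref{temperature-1}). As in the $2D$ case, the boundary terms vanish because $\Delta\phi-F'(\phi)|_\Gamma=0$ and $\Delta\theta|_\Gamma=0$, which follow from the boundary conditions \eqref{BC1}--\eqref{BC} together with the respective evolution equations. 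This produces the same list of terms $K_1,\dots,K_7$, and the whole proof reduces to re-estimating each of them with the $n=3$ embeddings $H^1\hookrightarrow L^6$ and the Gagliardo--Nirenberg interpolations $\|w\|_{\mathbf{L}^\infty}\le C\|w\|^{1/4}\|\Delta w\|^{3/4}$, $\|w\|_{\mathbf{L}^4}\le C\|w\|^{1/4}\|\nabla w\|^{3/4}$.

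\medskip
\noindent\textbf{Key steps.} First I would bound the inertial term $K_1=-(u\cdot\nabla u,Su)$: using $\|u\|_{\mathbf{L}^\infty}\le C\|\nabla u\|^{1/2}\|Su\|^{1/2}$ in $3D$ and Young's inequality, one gets a term controlled by $\tfrac{\nu}{16}\|Su\|^2+C\|\nabla u\|^6$, which is the source of the quartic power $\mathcal{A}_2^4$ (since $\|\nabla u\|^6\le C\mathcal{A}_2^3\cdot\mathcal{A}_2=C\mathcal{A}_2^4$ after balancing). Next I would treat the temperature-induced stress term $K_2$, splitting it exactly as in Lemma \ref{high2d} into $K_{2a},K_{2b},K_{2c}$; here the products $\|\nabla\theta\|_{\mathbf{L}^6}\|\nabla\phi\|_{\mathbf{L}^6}^2$ and $\|\theta\|_{L^\infty}\|\Delta\phi\|\|\nabla\phi\|_{\mathbf{L}^\infty}$ are estimated by interpolation and the uniform bounds from Corollary \ref{low-estimate}, absorbing the top-order pieces $\|Su\|^2$, $\|\nabla(\Delta\phi-F'(\phi))\|^2$, $\|\nabla\Delta\theta\|^2$ with small constants and leaving polynomial lower-order remainders. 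The Boussinesq term $K_3$, the pressure term $K_4$, the zeroth-order term $K_5$ (controlled by $\|F''(\phi)\|_{L^\infty}$ via Lemma \ref{mphi}), and the commutator term $K_6$ are handled as in the $2D$ proof, again with every top-order norm absorbed. Finally the temperature estimate $K_7$ from \eqref{temperature-1} is redone in $3D$: $\|\nabla(u\cdot\nabla\theta)\|^2$ is split into $\|\nabla u\|_{\mathbf{L}^3}^2\|\nabla\theta\|_{\mathbf{L}^6}^2+\|u\|_{\mathbf{L}^\infty}^2\|\Delta\theta\|^2$ and interpolated so that $\|Su\|^2$ and $\|\nabla\Delta\theta\|^2$ are absorbed. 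Summing all contributions and choosing the weight $\eta$ on $\|\Delta\theta\|^2$ so that the $\|\nabla\Delta\theta\|^2$ coming from $K_2$ is dominated yields \eqref{higher energy inequality1}.

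\medskip
\noindent\textbf{Main obstacle.} The delicate point is bookkeeping the powers so that the right-hand side collapses to exactly $C_*(\mathcal{A}_2^4+\mathcal{A}_2)$ rather than a higher power: one must repeatedly use Young's inequality in the sharp form $xy\le \epsilon x^p+C_\epsilon y^{p'}$ to convert mixed products such as $\|\nabla\Delta\theta\|^{2/3}\|\nabla(\Delta\phi-F'(\phi))\|^{4/3}\cdot(\text{lower order})$ into an absorbable top-order term plus a polynomial in $\mathcal{A}_2$ of degree at most four. The quartic ceiling is genuinely the $3D$ analogue of the quadratic ceiling in \eqref{higher energy inequality}; unlike in two dimensions, the inequality \eqref{higher energy inequality1} is \emph{not} globally integrable against the $L^1$-in-time bound on $\mathcal{A}_2$ from Corollary \ref{low-estimate}, which is precisely why it yields only local strong solutions (Theorem \ref{locstrong}) unless $\nu$ is taken large (Theorem \ref{theorem on large viscosity case}). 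I would therefore take care that no step secretly requires smallness of $\mathcal{A}_2$, so that the inequality is valid for all $t>0$ as stated.
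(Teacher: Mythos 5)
Your proposal follows essentially the same route as the paper: reuse the identities \eqref{time derivative of A-part1} and \eqref{temperature-1} from the $2D$ proof and re-estimate $K_1,\dots,K_7$ with the $3D$ embeddings, collecting a polynomial of degree at most four in $\mathcal{A}_2$. Two small inaccuracies are worth flagging. First, $K_1$ is not the source of the quartic power: it yields only $C\|\nabla u\|^6\leq C\mathcal{A}_2^3$, which is already dominated by $\mathcal{A}_2^4+\mathcal{A}_2$; the genuinely quartic contributions come from $K_{2c}$ and $K_6$, which in $3D$ produce eighth powers such as $\|\Delta\phi-F'(\phi)\|^8$, $\|\Delta\theta\|^8$ and $\|\nabla u\|^8$. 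Second, your final step of ``choosing the weight $\eta$ on $\|\Delta\theta\|^2$'' is not available here: the functional $\mathcal{A}_2$ in the statement carries coefficient $1$ on $\|\Delta\theta\|^2$ (the paper explicitly remarks on this difference from $\mathcal{A}_1$), and the full dissipation $k\|\nabla\Delta\theta\|^2$ is retained on the left, so there is no slack to absorb a $\|\nabla\Delta\theta\|^2$ contribution from $K_{2a}+K_{2b}$. The paper avoids generating such a term by bounding $\|\nabla\theta\|_{\mathbf{L}^6}\leq C\|\Delta\theta\|$ directly in $3D$; if you instead interpolate against $\|\nabla\Delta\theta\|$ you would either have to modify $\mathcal{A}_2$ or settle for a smaller dissipation constant, which is harmless for the applications but does not give \eqref{higher energy inequality1} exactly as stated.
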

 \br We note that the coefficient of the third term in
$\mathcal{A}_2(t)$ is different from the one in $\mathcal{A}_1(t)$ (see \eqref{def of higher order energy}).
 \er
\begin{proof}
 We re-estimate the right-hand side of \eqref{time derivative of
A-part1} and \eqref{temperature-1} by using the $3D$ version of Sobolev
embedding theorems. The estimates for $K_3$ and $K_5$ remain
unchanged. Next,
\be K_1 \leq  \|u\|_{\mathbf{L}^\infty} \|\nabla u\|\|S u\|\leq C\|S
u\|^\frac32\|\nabla u\|^\frac32\leq \frac{\nu}{12}\|S
u\|^2+C\|\nabla u\|^6.\non \ee \bea K_{2a}+K_{2b}&\leq &
C\|\nabla\theta\|_{\mathbf{L}^6}\|\nabla\phi\|_{\mathbf{L}^6}^2\|S
u\|
\leq\frac{\nu}{12}\|S u\|^2+ C\|\Delta \theta\|^2\|\Delta \phi\|^4\non\\
&\leq& \frac{\nu}{12}\|S u\|^2+ C\|\Delta \theta\|^2\|\Delta
\phi-F'(\phi)\|^4+C\|\Delta \theta\|^2.\non
 \eea
 \bea
 K_{2c}&\leq&|b|\lambda_0\|\theta\|_{L^\infty}\|\Delta\phi\|\|\nabla\phi\|_{\mathbf{L}^\infty}\|S
u\|   \non\\
&\leq& C\|\Delta \theta\|^\frac12\|\nabla
\theta\|^\frac12\|\Delta\phi\|\|\nabla
\phi\|_{\mathbf{H}^2}^\frac12\|\Delta \phi\|^\frac12\|S
u\|\non\\
&\leq&\frac{\nu}{12}\|S u\|^2+C\|\Delta \theta\|\|\nabla
\theta\|(\|\Delta\phi-F'(\phi)\|^3+1)(\|\nabla(\Delta\phi-F'(\phi))\|+\|\Delta\phi-F'(\phi)\|+1)
\non\\
&\leq&\frac{\nu}{12}\|S
u\|^2+\frac{a\lambda_0\gamma}{6}\|\nabla(\Delta\phi-F'(\phi))\|^2\non\\
&&+C(\|\Delta\phi-F'(\phi)\|^2+\|\Delta\phi-F'(\phi)\|^8+\|\Delta\theta\|^2+\|\Delta\theta\|^8).\non
 \eea
 \bea
K_4&\leq&
a\lambda_0\|\nabla\pi\|\|\nabla\phi\|_{\mathbf{L}^6}\|\Delta\phi-F'(\phi)\|_{{L}^3}
\non\\
&\leq&C\|S
u\|\big(\|\Delta\phi-F'(\phi)\|+1\big)\|\Delta\phi-F'(\phi)\|^{\frac12}\|\nabla(\Delta\phi-F'(\phi))\|^{\frac12}\non
\\
&\leq&\frac{\nu}{12}\|S
u\|^2+\frac{a\lambda_0\gamma}{6}\|\nabla(\Delta\phi-F'(\phi))\|^2+C(\|\Delta\phi-F'(\phi)\|^2+\|\Delta\phi-F'(\phi)\|^6).\non
 \eea
 \bea
K_6
&=&2a\lambda_0\int_{\Omega}\nabla_j(\Delta\phi-F'(\phi))u_i\nabla_j\nabla_i\phi\,dx
-2a\lambda_0\big(\nabla(\Delta\phi-F'(\phi)), F'(\phi)u \big)   \non\\
&\leq&C\|\nabla(\Delta\phi-F'(\phi))\|\big(\|
u\|_{\mathbf{L}^6}\|\nabla^2\phi\|_{\mathbf{L}^3}+\|F'(\phi)\|_{\mathbf{L}^\infty}\|u\|  \big)  \non\\
&\leq&C\|\nabla(\Delta\phi-F'(\phi))\|\|\nabla{u}\|\big(\|\Delta\phi-F'(\phi)\|^{\frac12}\|\nabla(\Delta\phi-F')\|^{\frac12}
+\|\Delta\phi-F'(\phi)\|+1\big)
  \non\\
&\leq&\frac{a\lambda_0\gamma}{6}\|\nabla(\Delta\phi-F'(\phi))\|^2+C(\|\nabla
u\|^2+\|\nabla
u\|^8+\|\Delta\phi-F'(\phi)\|^2+\|\Delta\phi-F'(\phi)\|^8). \non
 \eea
 \bea
K_{7}&\leq&
\frac{k}{2}\|\nabla\Delta\theta\|^2+\frac{1}{k}\|\nabla(u\cdot\nabla\theta)\|^2
\non\\
&\leq&\frac{k}{2}\|\nabla\Delta\theta\|^2+C\big(\|\nabla
u\|_{\mathbf{L}^4}^2\|\nabla\theta\|_{\mathbf{L}^4}^2+\|u\|_{\mathbf{L}^\infty}^2\|\Delta\theta\|^2\big)
\non\\
&\leq&\frac{k}{2}\|\nabla\Delta\theta\|^2+C\|S
u\|^{\frac32}\|\nabla u\|^{\frac12}\|\Delta\theta\|^2  \non\\
&\leq&\frac{k}{2}\|\nabla\Delta\theta\|^2+\frac{\nu}{12}\|S
u\|^2+C\|\nabla u\|^2\|\Delta\theta\|^4.\non
 \eea
 Collecting all the estimates above, we arrive at our conclusion \eqref{higher energy inequality1}.
 \end{proof}
 \textbf{Proof of  Theorem \ref{locstrong}}. Due to Lemma \ref{high3da}, a standard argument of the ordinary differential equation yields that there is a time $T_0=T_0(u_0, \phi_0, \theta_0)<+\infty$ such that $\mathcal{A}_2(t)$ is bounded on $[0,T_0]$. This enables us to prove that problem \eqref{navier-stokes}--\eqref{BC} admits a local strong solution. Uniqueness of the strong solution follows from Proposition \ref{wsuniq}. \quad \quad \quad $\square$

Since our problem contains the Navier--Stokes equation as a
subsystem, in the $3D$ case, we cannot except the existence of
global strong solutions to problem \eqref{navier-stokes}--\eqref{BC}
for arbitrary large initial data. However, the global strong solution will
exist if we further assume that the lower bound of the viscosity
$\nu^*$ is sufficiently large.

Set $$\tilde{\mathcal{A}}_2(t)= \mathcal{A}_2(t)+1.$$
Then Theorem \ref{theorem on large viscosity case} is a direct
consequence of the following higher-order differential inequality
concerning $\tilde{\mathcal{A}}_2(t)$.

\begin{lemma} \label{proposition on high energy inequality 3D-1}
Suppose $n=3$. We assume that $\nu \geq 1$ and \eqref{ini} is
fulfilled. Let $(u, \phi, \theta)$ be a smooth solution to the
problem \eqref{navier-stokes}--\eqref{BC}. Then the following
inequality holds
 \bea && \frac{d}{dt}\tilde{\mathcal{A}}_2(t)+\big[\nu-M_1\nu^\frac12\tilde{\mathcal{A}}_2(t) \big]\|S
u\|^2+\Big(a\lambda_0\gamma-\frac{M_1\tilde{\mathcal{A}}_2(t)}{\nu^\frac12}
\Big)\|\nabla(\Delta\phi-F'(\phi))\|^2+k\|\nabla\Delta\theta\|^2\non\\
&\leq& M_2\mathcal{A}_2(t), \label{high energy inequality 3D-2}
 \eea
 $M_1$ and $M_2$ are constants depending on $\|u_0\|$, $\|\phi_0\|_{H^1}$,
 $\|\theta_0\|_{H^1}$, $\Omega$, and coefficients of the system, but not on $\nu$.
\end{lemma}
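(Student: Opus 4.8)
The plan is to revisit the computation behind Lemma~\ref{high3da} verbatim, namely the differential identities \eqref{time derivative of A-part1} and \eqref{temperature-1} together with the seven terms $K_1,\dots,K_7$, but to re-estimate them while keeping the power of $\nu$ explicit. In Lemma~\ref{high3da} each cross term was driven all the way down by Young's inequality to a standing power $\mathcal{A}_2^4$, with all the $\nu$-dependence buried inside $C_*$. Here I would instead stop the Young splitting one step earlier, so that every top-order contribution appears in the form (coefficient)$\times$(dissipation), where the coefficient is \emph{linear} in $\tilde{\mathcal{A}}_2(t)=\mathcal{A}_2(t)+1$ and carries a controlled power of $\nu$, and only genuinely $\mathcal{A}_2$-linear residues are left on the right-hand side.

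The decisive device is a $\nu$-weighted Young inequality applied to the terms that couple the velocity regularity $\|S u\|$ to the phase-field regularity $\|\nabla(\Delta\phi-F'(\phi))\|$, i.e. $K_4$ and the borderline pieces of $K_2$ (in particular $K_{2c}$) and $K_6$. For a product $\|S u\|\cdot(\cdots)$ I would use $ab\le \tfrac{\nu^{1/2}}{2}a^2+\tfrac{1}{2\nu^{1/2}}b^2$ with $a=\|S u\|$, rather than the fixed weight $\tfrac{\nu}{12}$ used before. Since $\nu$ is large this loads the cost onto the cheap dissipation $\nu\|S u\|^2$, with a factor $\nu^{1/2}$, while only a $\nu^{-1/2}$ share falls on the fixed dissipation $a\lambda_0\gamma\|\nabla(\Delta\phi-F'(\phi))\|^2$. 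Estimating the remaining lower-order factors by $\tilde{\mathcal{A}}_2$, using $\|\nabla u\|^2,\ \|\Delta\phi-F'(\phi)\|^2,\ \|\Delta\theta\|^2\le\mathcal{A}_2\le\tilde{\mathcal{A}}_2$, then produces exactly the two coefficients $M_1\nu^{1/2}\tilde{\mathcal{A}}_2$ and $M_1\tilde{\mathcal{A}}_2/\nu^{1/2}$ on the left of \eqref{high energy inequality 3D-2}, which is why those two dissipation terms carry modified coefficients.

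For the remaining terms I would invoke $\nu\ge 1$ to keep all constants free of $\nu$. The purely velocity self-interaction $K_1\le C\|\nabla u\|^{3/2}\|S u\|^{3/2}$ is bounded, via Young on $(\|\nabla u\|^2\|S u\|^2)^{3/4}$, by $\varepsilon\|\nabla u\|^2\|S u\|^2+C_\varepsilon$; choosing $\varepsilon$ of order $\nu^{1/2}$ turns the first piece into a contribution to $M_1\nu^{1/2}\tilde{\mathcal{A}}_2\|S u\|^2$ and, since the dual remainder scales like $\nu^{-3/2}\le 1$, keeps $C_\varepsilon$ uniform in $\nu$. The temperature couplings $K_3$ and $K_7$ are treated so that the temperature dissipation is retained with a clean, $\nu$-independent coefficient: the Young step in $K_7$ consumes a fixed fraction of $k\|\nabla\Delta\theta\|^2$, the resulting $k^{-1}\|\nabla(u\cdot\nabla\theta)\|^2$ is handled as in Lemma~\ref{high3da}, and its $\|S u\|$-content is again dominated by the large velocity dissipation through $\nu\ge 1$. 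This is also why the coefficient of $\|\Delta\theta\|^2$ in $\mathcal{A}_2$ is taken to be $1$ here, rather than the $\eta_1$ of the two-dimensional Lemma~\ref{high2d}. Summing the seven estimates together with \eqref{temperature-1} then yields \eqref{high energy inequality 3D-2}.

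The main obstacle is the bookkeeping: I must choose the Young weights so that \emph{all} of the $\nu$-dependence is funnelled into precisely the two coefficients $M_1\nu^{1/2}\tilde{\mathcal{A}}_2$ and $M_1\tilde{\mathcal{A}}_2/\nu^{1/2}$, while guaranteeing that $M_1,M_2$ stay independent of $\nu$ and that every surviving high power of $\tilde{\mathcal{A}}_2$ is paired with a dissipation factor, so that the free residue on the right collapses to the linear quantity $M_2\mathcal{A}_2$. The hypothesis $\nu\ge 1$ is used repeatedly to convert the $\nu^{-p}$ remainders of the weighted Young steps into $\nu$-uniform constants. Once \eqref{high energy inequality 3D-2} is available, Theorem~\ref{theorem on large viscosity case} follows by a continuation argument: for $\nu$ large enough the brackets $\nu-M_1\nu^{1/2}\tilde{\mathcal{A}}_2$ and $a\lambda_0\gamma-M_1\tilde{\mathcal{A}}_2/\nu^{1/2}$ stay positive as long as $\tilde{\mathcal{A}}_2$ remains below a threshold fixed by the initial data, so the dissipation controls the growth and the bound propagates globally in time.
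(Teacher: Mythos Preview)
Your proposal is correct and follows essentially the same strategy as the paper's own proof: one re-estimates the same terms $K_1,\dots,K_7$ from \eqref{time derivative of A-part1} and \eqref{temperature-1} with $\nu$-weighted Young inequalities so that the top-order cross terms feed coefficients $\nu^{1/2}\tilde{\mathcal{A}}_2$ and $\nu^{-1/2}\tilde{\mathcal{A}}_2$ into $\|Su\|^2$ and $\|\nabla(\Delta\phi-F'(\phi))\|^2$ respectively, while $\nu\ge 1$ keeps the lower-order residues uniform in $\nu$.

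One small caveat concerns your treatment of $K_1$. Your one-step Young bound on $C(\|\nabla u\|^2\|Su\|^2)^{3/4}$ leaves a \emph{pure constant} $C_\varepsilon\sim\nu^{-3/2}$ as residue, and a constant cannot be absorbed into $M_2\mathcal{A}_2(t)$ as the lemma requires (since $\mathcal{A}_2$ may be small). The paper avoids this by using the sharper Agmon interpolation $\|u\|_{\mathbf{L}^\infty}\le C\|u\|^{1/4}\|Su\|^{3/4}$ and a two-step Young splitting, obtaining
\[
K_1\le \tfrac{\nu}{12}\|Su\|^2+\nu^{1/2}\|\nabla u\|^2\|Su\|^2+C\nu^{-11/2}\|\nabla u\|^2,
\]
so that the residue still carries the factor $\|\nabla u\|^2\le\mathcal{A}_2$. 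With this adjustment (or, equivalently, by allowing $M_2\tilde{\mathcal{A}}_2$ on the right, which is harmless for the application to Theorem~\ref{theorem on large viscosity case}), your argument goes through as written.
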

\begin{proof} We note that the uniform estimates in Corollary \ref{low-estimate} still hold. Then we re-estimate the terms $K_1,..., K_{7}$
in an alternative way. The estimates for $K_3$ and $K_5$ remain
unchanged. For the other terms, we have
 \bea K_1 &\leq&  C\|S
u\|^{\frac74}\|u\|^\frac14\|\nabla u\|\non\\
& \leq&  \frac{\nu}{12}\|S
u\|^2+\nu^\frac12\|\nabla u\|^2\|\Delta
u\|^2+C\nu^{-\frac{11}{2}}\|\nabla u\|^2,\non
 \eea
 \bea &&
K_{2a}+K_{2b}\leq
C\|\nabla\theta\|_{\mathbf{L}^6}\|\nabla\phi\|_{\mathbf{L}^6}^2\|S u\|\non\\
&\leq&C\|\nabla\Delta\theta\|^{\frac12}\|\nabla\theta\|^{\frac12}
(\|\Delta\phi-F'(\phi)\|+1)\big(\|\nabla(\Delta\phi-F'(\phi))\|^{\frac12}\|\nabla\phi\|^{\frac12}+1
\big)\|S u\|    \non\\
&\leq&
C\|\nabla\Delta\theta\|^{\frac12}\|\nabla\theta\|^{\frac12}(\|\Delta\phi-F'(\phi)\|+1)\|S
u\|  \non\\
&&+C\|\nabla\Delta\theta\|^\frac12\|\nabla\theta\|^{\frac12}(\|\Delta\phi-F'(\phi)\|+1)\|S
u\|\|\nabla(\Delta\phi-F'(\phi))\|^{\frac12} \non\\
&\leq&
\Big[\frac{\nu}{12}+\nu^\frac12(\|\Delta\phi-F'(\phi)\|+\|\nabla
\theta\|) \Big]\|S
u\|^2 +\frac{k}{4}\|\nabla\Delta\theta\|^2\non\\
&&
+\frac{C}{\nu}(1+\|\Delta\phi-F'(\phi)\|^2)\|\nabla(\Delta\phi-F'(\phi))\|^2\non\\
&& +
C\Big(\frac{1}{\nu}+\frac{1}{\nu^2}\Big)(\|\nabla
\theta\|^2+\|\Delta\phi-F'(\phi)\|^2), \non
 \eea
 \bea
 K_{2c}&\leq&|b|\lambda_0\|\theta\|_{L^\infty}\|\Delta\phi\|\|\nabla\phi\|_{\mathbf{L}^\infty}\|S
u\|   \non\\
&\leq& C\|\Delta \theta\|^\frac12\|\nabla
\theta\|^\frac12\|\Delta\phi\|^\frac32(\|\nabla
\Delta\phi\|^\frac12+\|\Delta\phi\|^\frac12)\|S
u\|\non\\
&\leq&  C\|\Delta \theta\|^\frac12\|\nabla
\theta\|^\frac12(\|\Delta\phi-F'(\phi)\|^\frac32+1)(\|\nabla
(\Delta\phi-F'(\phi))\|^\frac12+ 1)\|S
u\|\non\\
&\leq&
\Big[\frac{\nu}{12}+\nu^\frac12(\|\Delta\phi-F'(\phi)\|^2+\|\Delta
\theta\|) \Big]\|S
u\|^2 \non\\
&&+\Big(\frac{a\lambda_0\gamma}{6}+\frac{1}{\nu}\|\Delta\phi-F'(\phi)\|^2\Big)\|\nabla(\Delta\phi-F'(\phi))\|^2\non\\
&& +C\left(1+\frac{1}{\nu}\right)(\|\Delta
\theta\|^2+\|\Delta\phi-F'(\phi)\|^2),\non
 \eea
 \bea
K_4 &\leq&C\|S
u\|\big(\|\Delta\phi-F'(\phi)\|+1\big)\|\Delta\phi-F'(\phi)\|^{\frac12}\|\nabla(\Delta\phi-F'(\phi))\|^{\frac12}\non
\\
&\leq&\Big(\frac{\nu}{12}+\nu^\frac12\|\Delta\phi-F'(\phi)\|^2
\Big)\|S
u\|^2+\frac{a\lambda_0\gamma}{6}\|\nabla(\Delta\phi-F'(\phi))\|^2+\frac{C}{\nu}\|\Delta\phi-F'(\phi)\|^2,\non
 \eea
 \bea
K_6&\leq&C\|\Delta\phi-F'(\phi)\|_{L^3}\|\nabla
u\|_{\mathbf{L}^6}\|\Delta \phi\|  \non\\
&\leq&C \|\nabla(\Delta\phi-F'(\phi))\|^\frac12\|\Delta\phi-F'(\phi)\|^\frac12\|S u\|\big(\|\Delta\phi-F'(\phi)\|+1 \big) \non\\
&\leq&\left(\frac{\nu}{12}+\nu^\frac12\|\Delta\phi-F'(\phi)\|\right)\|S
u\|^2\non\\
&& +\left(\frac{a\lambda_0\gamma}{6}+\frac{1}{\nu} \|\Delta\phi-F'(\phi)\|^2\right) \|\nabla(\Delta\phi-F'(\phi))\|^2\non\\
&& +C\left(1+\frac{1}{\nu^2}\right)\|\Delta\phi-F'(\phi)\|^2, \non
 \eea
 \bea
K_7&\leq& \frac{k}{2}\|\nabla\Delta\theta\|^2+C\big(\|\nabla
u\|_{\mathbf{L}^4}^2\|\nabla\theta\|_{\mathbf{L}^4}^2+\|u\|_{\mathbf{L}^\infty}^2\|\Delta\theta\|^2\big)
\non\\
&\leq&\frac{k}{2}\|\nabla\Delta\theta\|^2+C\|S{u}\|^{\frac32}\|\nabla{u}\|^{\frac12}\|\Delta\theta\|^\frac32\|\nabla
\theta\|^\frac12+C\|S
u\|^\frac32\| u\|^\frac12\| \Delta\theta\|^2 \non\\
&\leq&\frac{k}{2}\|\nabla\Delta\theta\|^2+\nu^\frac12\|\Delta\theta\|^2
\|S u\|^2+\frac{C}{\nu^\frac32}(\|\nabla u\|^2
+\|\Delta\theta\|^2).\non
 \eea
Combining the above estimates, using the fact $\nu\geq 1$ and the
definition of $\tilde{\mathcal{A}}_2(t)$, we deduce the inequality
\eqref{high energy inequality 3D-2}. The proof is complete.
\end{proof}

\textbf{Proof of Theorem \ref{theorem on large viscosity case}}. It
follows from Corollary \ref{low-estimate} that
 \be
\int_{t}^{t+1}\mathcal{A}_2(\tau)d\tau \leq \tilde{M},\quad \forall
\, t\geq 0,
 \ee where
$\tilde{M} > 0$ may depend on $\Omega, \|u_0\|, \|\phi_0\|_{H^1},
\|\theta_0\|_{H^1}$, and coefficients of the system except $\nu$.
Moreover, if the viscosity $\nu$ is sufficiently large such that
\be \nu^{\frac12} \geq \max\left\{1,
\frac{1}{a\lambda_0\gamma}\right\}
M_1\left(\tilde{\mathcal{A}}_2(0)+M_2\tilde{M}+2\tilde{M}\right)+1,
 \label{large nu}
 \ee
then following the same argument as in \cite{LL95, LWX10}, we can
use Lemma \ref{proposition on high energy inequality 3D-1} to obtain
the uniform estimate
 \be \tilde{\mathcal{A}}_2(t) \leq \frac{\nu^{\frac12}\min\{a\lambda_0\gamma, 1\}}{M_1}, \quad
\forall \, t \geq 0, \label{estimate for tilde A}
 \ee
 which yields
the required conclusion.  \quad $\square$
\subsection{Uniqueness of strong solutions}
The uniqueness of strong solutions to the problem
\eqref{navier-stokes}--\eqref{BC} can be obtained by the energy
method.
\begin{proposition}[Uniqueness of strong solutions] \label{wsuniq}
For $n=2,3$, let $(u_1, \phi_1, \theta_1)$ and $(u_2, \phi_2,
\theta_2)$ be two strong solutions on [0,T] that start from the same
initial data
 $(u_0, \phi_0, \theta_0)\in \mathbf{V}
\times H^2(\Omega) \times \big(H^1_0(\Omega) \cap H^2(\Omega)\big)$
satisfying \eqref{ini}, then $ (u_1, \phi_1, \theta_1)=(u_2, \phi_2,
\theta_2)$.
\end{proposition}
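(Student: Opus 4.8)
The plan is to argue by a standard energy estimate on the \emph{differences} of the two solutions and to close a Gronwall inequality for a suitably weighted energy. Set $u=u_1-u_2$, $\phi=\phi_1-\phi_2$, $\theta=\theta_1-\theta_2$, $p=p_1-p_2$. Subtracting the two systems, these differences solve equations with zero initial data and homogeneous boundary data ($u|_\Gamma=0$; $\phi|_\Gamma=0$ since both $\phi_i\equiv-1$ on $\Gamma$; $\theta|_\Gamma=0$), and the bilinear convective terms split in the usual way, e.g. $(u_1\cdot\nabla)u_1-(u_2\cdot\nabla)u_2=(u\cdot\nabla)u_1+(u_2\cdot\nabla)u$, and analogously for the $\phi$- and $\theta$-equations. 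I would then introduce the weighted energy $\mathcal{Y}(t)=\|u\|^2+a\lambda_0\|\nabla\phi\|^2+\beta\|\theta\|^2$, with $\beta>0$ a large constant to be fixed, and test the three difference equations respectively by $u$, by $-a\lambda_0\Delta\phi$, and by $\beta\theta$, then add. Note $\mathcal{Y}(0)=0$.

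The crucial point is the cancellation that underlies the energy law. The constant part $a\lambda_0$ of $\lambda(\theta_1)=\lambda_0(a-b\theta_1)$ in the Korteweg stress difference, after integration by parts against $u$, produces a term $-a\lambda_0\int_\Omega(u\cdot\nabla\phi_1)\Delta\phi\,dx$ that exactly cancels the transport contribution $+a\lambda_0\int_\Omega(u\cdot\nabla\phi_1)\Delta\phi\,dx$ arising from testing the phase equation by $-a\lambda_0\Delta\phi$; this is precisely why the matching weight $a\lambda_0$ is placed on the phase test function. What survives are genuinely lower-order residuals: commutator-type terms such as $a\lambda_0\int_\Omega\nabla_j\phi\,\nabla_j\nabla_i\phi_1\,u_i\,dx$ and $a\lambda_0\int_\Omega\nabla\phi_2\otimes\nabla\phi:\nabla u\,dx$, the temperature-dependent stress pieces carrying a factor $\|\theta_i\|_{L^\infty}$, the Boussinesq term $\alpha g\int_\Omega\theta\,\mathbf{j}\cdot u\,dx$, the convection residual $\int_\Omega(u\cdot\nabla u_1)\cdot u\,dx$, and transport terms like $\int_\Omega(u_2\cdot\nabla\phi)\Delta\phi\,dx$ and $\int_\Omega(u\cdot\nabla\theta_1)\theta\,dx$. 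Using the uniform bounds of Corollary \ref{low-estimate} (in particular $|\phi_i|\le1$ and the $L^\infty$-bound on $\theta_i$ from \eqref{ini}, which make $F'(\phi_i),F''(\phi_i)$ bounded and give $|F'(\phi_1)-F'(\phi_2)|\le\|F''\|_{L^\infty}|\phi|$), together with the Gagliardo--Nirenberg, Sobolev, Poincar\'e and Young inequalities, each residual can be dominated by $\epsilon$ times the dissipation $\nu\|\nabla u\|^2+a\lambda_0\gamma\|\Delta\phi\|^2+\beta k\|\nabla\theta\|^2$ plus $C(t)\mathcal{Y}(t)$.

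The main obstacle is the high-order thermo-stress difference $R_2=-b\lambda_0\int_\Omega\theta\,\nabla\phi_2\otimes\nabla\phi_2:\nabla u\,dx$, exactly the term flagged in Remark \ref{can} as breaking the energy \emph{equality}. It cannot be absorbed by the velocity dissipation alone, since $R_2\le\frac{\nu}{8}\|\nabla u\|^2+\frac{C^2}{\nu}\|\nabla\theta\|^2$ with $C=|b|\lambda_0 C_{\mathrm{Sob}}\sup_t\|\nabla\phi_2\|_{\mathbf{L}^6}^2$ a fixed, not necessarily small, constant (finite since $\phi_2\in L^\infty(0,T;H^2)\hookrightarrow L^\infty(0,T;\mathbf{W}^{1,6})$). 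The way around this is the weighting: because the temperature dissipation enters as $\beta k\|\nabla\theta\|^2$, I would fix $\beta$ large enough (depending on $\nu,k,b,\lambda_0$ and the bound $M$) that $\frac{C^2}{\nu}\|\nabla\theta\|^2\le\frac{\beta k}{2}\|\nabla\theta\|^2$, absorbing $R_2$ without requiring smallness of the difference $\theta$.

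The remaining care goes into the time-integrability of the Gronwall coefficient. Several residuals carry norms such as $\|u_2\|_{\mathbf{L}^\infty}$, $\|\nabla^2\phi_1\|_{\mathbf{L}^3}$ and $\|\nabla\theta_1\|_{\mathbf{L}^\infty}$, which are bounded only in $L^2(0,T)$ rather than $L^\infty(0,T)$; this is exactly why the \emph{strong}-solution regularity $u_i\in L^2(0,T;\mathbf{H}^2)$ and $\phi_i,\theta_i\in L^2(0,T;H^3)$ is indispensable, guaranteeing that the resulting coefficient $C(t)$ lies in $L^1(0,T)$. Collecting all estimates and absorbing the $\epsilon$-dissipation terms into the left-hand side yields $\frac{d}{dt}\mathcal{Y}(t)\le C(t)\mathcal{Y}(t)$ with $\int_0^T C(t)\,dt<+\infty$. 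Since $\mathcal{Y}(0)=0$, Gronwall's lemma forces $\mathcal{Y}\equiv0$ on $[0,T]$, so $u_1=u_2$ and $\theta_1=\theta_2$, while $\nabla\phi\equiv0$ together with $\phi|_\Gamma=0$ gives $\phi_1=\phi_2$, completing the proof.
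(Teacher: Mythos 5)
Your proof is correct and follows the same overall strategy as the paper: subtract the two systems, test the velocity difference by $\bar u$ and the phase difference by $-a\lambda_0\Delta\bar\phi$, exploit the partial cancellation between the constant part $a\lambda_0$ of the Korteweg stress and the phase transport term, estimate the residuals using the uniform bounds of Corollary \ref{low-estimate} and the maximum principles, and conclude by Gronwall from $\mathcal{Y}(0)=0$. The one genuine divergence is the treatment of the temperature difference and of the critical thermo-capillary stress term flagged in Remark \ref{can}. The paper tests the $\bar\theta$-equation by $-\Delta\bar\theta$, so its difference energy contains $\|\nabla\bar\theta\|^2$ and its dissipation contains $k\|\Delta\bar\theta\|^2$; the dangerous term $I_4=-b\lambda_0(\bar\theta\,\nabla\phi_1\otimes\nabla\phi_1,\nabla\bar u)$ is then controlled via the Agmon-type bound $\|\bar\theta\|_{L^\infty}\leq C\|\nabla\bar\theta\|^{\frac12}\|\Delta\bar\theta\|^{\frac12}$, which needs that $H^2$-level dissipation. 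You instead test by $\beta\bar\theta$, keep only $\|\bar\theta\|^2$ in the energy and $\beta k\|\nabla\bar\theta\|^2$ in the dissipation, and bound the same term by $\|\bar\theta\|_{L^6}\|\nabla\phi_i\|_{\mathbf{L}^6}^2\|\nabla\bar u\|\leq C\|\nabla\bar\theta\|\,\|\nabla\bar u\|$, absorbing the $\|\nabla\bar\theta\|^2$ remainder by choosing $\beta$ large in terms of the fixed bound on $\sup_t\|\phi_i\|_{H^2}$. Both mechanisms close; yours is slightly more economical in that it establishes uniqueness already in the weaker $\mathbf{L}^2\times H^1\times L^2$ topology and never needs $\Delta\bar\theta$, while the paper's choice yields the contraction estimate in $\mathbf{L}^2\times H^1\times H^1$, which is what it reuses elsewhere. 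One small remark: your worry about coefficients such as $\|u_2\|_{\mathbf{L}^\infty}$ or $\|\nabla\theta_1\|_{\mathbf{L}^\infty}$ being only $L^2$ in time is legitimate but avoidable; as in the paper's estimates, every residual can be bounded using only $\|\nabla u_i\|$, $\|\Delta\phi_i\|$, $\|\Delta\theta_i\|$, which are bounded in $L^\infty(0,T)$ for strong solutions, so the Gronwall coefficient is in fact bounded rather than merely integrable. Either way the argument goes through.
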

\begin{proof}
We provide the proof for $3D$ case and the proof for $2D$ is
similar. Denote
\[ \bar{u}=u_1-u_2, \ \ \bar{\phi}=\phi_1-\phi_2, \ \ \bar{\theta}=\theta_1-\theta_2. \]
 We can see that $(\bar u, \bar \phi, \bar \theta)$ satisfy
 \bea
 &&\langle \bar u_t, v\rangle_{\mathbf{V}', \mathbf{V}}+
 \int_\Omega [(u_1\cdot\nabla)u_1-(u_2\cdot\nabla)u_2]\cdot v dx+\nu\int_\Omega \nabla\bar{u} :
 \nabla\bar{v}dx \non\\
 &=&\int_\Omega [\lambda(\theta_1)\nabla\phi_1\otimes\nabla\phi_1-\lambda(\theta_2)\nabla\phi_2\otimes\nabla\phi_2] : \nabla v dx
 +\alpha g\int_\Omega  \bar \theta \mathbf{j} \cdot v dx,\, \forall\,
 v\in \mathbf{V},\label{dns}\\
 &&\bar \phi_t+ u_1\cdot \nabla\phi_1- u_2\cdot \nabla\phi_2=\gamma (\Delta \bar\phi- F'(\phi_1)+F'(\phi_2)), \quad \text{a.e. in\ } \Omega, \label{dphi}\\
 &&\bar \theta_t+ u_1\cdot \nabla\theta_1-u_2\cdot\nabla \theta_2=k\Delta\bar{\theta},
 \quad \text{a.e. in\ } \Omega.\label{dthe}
 \eea
Taking  $v=\bar{u}$ in \eqref{dns}, testing \eqref{dphi} by
$-a\lambda_0\Delta\bar{\phi}$ and \eqref{dthe} by
$-\Delta\bar{\theta}$ in $L^2(\Omega)$, respectively, adding up
these three resultants, then performing integration by parts and
using the incompressible condition for the velocity, we get
\bea
&&\frac12\frac{d}{dt}\big(\|\bar{u}\|^2+a\lambda_0\|\nabla\bar{\phi}\|^2+\|\nabla\bar{\theta}\|^2
\big)+\nu\|\nabla\bar{u}\|^2+a\lambda_0\gamma\|\Delta\bar{\phi}\|^2+k\|\Delta\bar{\theta}\|^2
\non\\
&=&-(\bar{u}\cdot\nabla{u_1},
\bar{u})-a\lambda_0(\nabla\bar\phi\Delta \phi_1, \bar
u)+a\lambda_0(u_1\cdot \nabla\bar{\phi},
\Delta\bar{\phi})-b\lambda_0\big(\bar{\theta}\nabla\phi_1\otimes\nabla\phi_1,
\nabla\bar{u}\big) \non\\
&&-b\lambda_0\big(\theta_2\nabla\bar{\phi}\otimes\nabla\phi_1,
\nabla\bar{u}\big)-b\lambda_0\big(\theta_2\nabla\phi_2\otimes\nabla\bar{\phi},
\nabla\bar{u}\big)+\alpha {g}\big(\bar{\theta}\mathbf{j}, \bar{u}
\big)  \non\\
&&+a\lambda_0\gamma\big(F'(\phi_1)-F'(\phi_2), \Delta\bar{\phi}\big)
+(\bar{u}\cdot\nabla\theta_1,
\Delta\bar{\theta})+(u_2\cdot\nabla\bar{\theta},
\Delta\bar{\theta})\non\\
 &:=& \sum_{m=1}^{10}I_m.
\label{eni} \eea
Keeping in mind the uniform estimates obtained in Corollary
\ref{low-estimate}, we proceed to estimate the right hand side of
\eqref{eni},
 \be I_1\leq
\|\bar{u}\|_{\mathbf{L}^4}^2\|\nabla{u_1}\| \leq
C\|\nabla\bar{u}\|^\frac32 \|\bar{u}\|^\frac12\|\nabla u_1\|\leq
\frac{\nu}{12}\|\nabla\bar{u}\|^2+C\|\nabla u_1\|^4\|\bar{u}\|^2.
\non
 \ee
 \bea I_2&\leq& a\lambda_0 \|\nabla\bar{\phi}\|_{\mathbf{L}^6}\|\bar{u}\|_{\mathbf{L}^3}\|\Delta\phi_1\|
 \leq C\|\Delta\phi_1\|\|\bar u\|^\frac12\|\nabla \bar u\|^\frac12\|\Delta \bar \phi\|\non\\
 &\leq& \frac{\nu}{12}\|\nabla \bar u\|^2+ \frac{a\lambda_0\gamma}{10}\|\Delta \bar \phi\|^2+
 C\|\Delta\phi_1\|^4\|\bar u\|^2.\non
 \eea
 \bea I_3 &\leq&
\|u_1\|_{\mathbf{L}^6}\|\Delta\bar{\phi}\|\|\nabla\bar{\phi}\|_{\mathbf{L}^3}
\leq C\|\nabla u_1\|\|\Delta\bar{\phi}\|^\frac32\|\nabla \bar \phi\|^\frac12 \non\\
&\leq&\frac{a\lambda_0\gamma}{10}\|\Delta \bar \phi\|^2 +C\|\nabla
u_1\|^4\|\nabla \bar \phi \|^2.\non \eea
 \bea
I_4+I_7
&\leq&|b|\lambda_0\|\nabla\bar{u}\|\|\bar{\theta}\|_{L^\infty}\|\nabla\phi_1\|_{\mathbf{L}^4}^2+|\alpha||g|\|\bar{\theta}\|\|\bar{u}\|
\non\\
&\leq&C\|\nabla\bar{u}\|\|\nabla\bar{\theta}\|^{\frac12}\|\Delta\bar{\theta}\|^{\frac12}+C\|\nabla\bar{u}\|\|\nabla\bar{\theta}\|  \non\\
&\leq&\frac{\nu}{12}\|\nabla\bar{u}\|^2+\frac{k}{6}\|\Delta\bar{\theta}\|^2+C\|\nabla
\bar{\theta}\|^2.\non \eea
 \bea I_5
&\leq&|b|\lambda_0\|\theta_2\|_{L^\infty}\|\nabla\bar{u}\|\|\nabla\bar{\phi}\|_{\mathbf{L}^3}\|\nabla\phi_1\|_{\mathbf{L}^6}
 \non\\
 &\leq& C\|\nabla\bar{u}\|\|\Delta\bar{\phi}\|^{\frac12}
\|\nabla\bar{\phi}\|^{\frac12}\|\Delta \phi_1\| \non\\
&\leq&\frac{\nu}{12}\|\nabla\bar{u}\|^2+\frac{a\lambda_0\gamma}{10}\|\Delta\bar{\phi}\|^2+C\|\Delta
\phi_1\|^4\|\nabla \bar{\phi}\|^2.\non
 \eea
\bea I_6 &\leq&
|b|\lambda_0\|\theta_2\|_{L^\infty}\|\nabla\bar{u}\|\|\nabla\bar{\phi}\|_{\mathbf{L}^3}\|\nabla\phi_2\|_{\mathbf{L}^6}
\non\\
&\leq&
C\|\nabla\bar{u}\|\|\Delta\bar{\phi}\|^{\frac12}\|\nabla\bar{\phi}\|^{\frac12}
\|\|\Delta{\phi}_2\| \non\\
&\leq&\frac{\nu}{12}\|\nabla\bar{u}\|^2+\frac{a\lambda_0\gamma}{10}\|\Delta\bar{\phi}\|^2+C\|\Delta{\phi}_2\|^4\|\nabla\bar{\phi}\|^2.\non
\eea
 \bea
I_8 &\leq& a\lambda_0\gamma \Big\|\bar\phi\int_0^1 F''(s
\phi_1+(1-s)\phi_2) ds\Big\|_{L^\infty}\|\Delta \bar{\phi}\| \non\\
&\leq&
C\|F''\|_{L^\infty}\|\bar{\phi}\|_{L^\infty}\|\Delta
\bar{\phi}\|\non\\
&\leq& C\|\Delta \bar \phi\|^\frac32 \|\nabla\bar\phi\|^\frac12 \leq
\frac{a\lambda_0\gamma}{10}\|\Delta\bar{\phi}\|^2+C\|\nabla\bar{\phi}\|^2.\non
 \eea
 \bea I_{9} &\leq&
\|\bar{u}\|_{\mathbf{L}^3}\|\nabla\theta_1\|_{\mathbf{L}^6}\|\Delta\bar{\theta}\|
\leq C\|\Delta
\theta_1\|\|\bar{u}\|^{\frac12}\|\nabla\bar{u}\|^{\frac12}\|\Delta\bar{\theta}\|
\non\\
&\leq&\frac{\nu}{12}\|\nabla\bar{u}\|^2+\frac{k}{6}\|\Delta\bar{\theta}\|^2+C\|\Delta
\theta_1\|^4\|\bar{u}\|^2. \non
 \eea
 \bea
I_{10}&\leq&\|u_2\|_{\mathbf{L}^6}\|\nabla\bar{\theta}\|_{\mathbf{L}^3}\|\Delta\bar\theta\|
\leq C\|\nabla{u}_2\|\|\nabla\bar{\theta}\|^{\frac12}\|\Delta\bar{\theta}\|^{\frac32}\non\\
&\leq&\frac{k}{6}\|\Delta\bar{\theta}\|^2+C\|\nabla{u}_2\|^4\|\nabla\bar{\theta}\|^2.\non
 \eea
Summing up, we obtain that
 \bea && \frac{d}{dt}(\|\bar{u}\|^2+a\lambda_0\|\nabla
\bar{\phi}\|^2+\|\nabla \bar{\theta}\|^2)
+\nu\|\nabla\bar{u}\|^2+a\lambda_0\gamma\|\Delta\bar{\phi}\|^2+k\|\Delta\bar{\theta}\|^2
\non\\
&\leq& Q(t)(\|\bar{u}\|^2+a\lambda_0\|\nabla \bar{\phi}\|^2+\|\nabla
\bar{\theta}\|^2), \label{dddf}
 \eea where $$Q(t)=C(1+\|\nabla u_1\|^4+\|\Delta \phi_1\|^4+ \|\Delta \theta_1\|^4+\|\nabla u_2\|^4+\|\Delta \phi_2\|^4)$$
with $C$ being a constant that may depend  on $M$, $\Omega$, and
coefficients of the system. Since the $\mathbf{V}\times H^2\times
H^2$-norm of strong solutions to problem
\eqref{navier-stokes}--\eqref{BC} are bounded on its existence time
interval, $Q(t)$ is bounded on $[0,T]$. Then we can complete the proof
by applying the Gronwall inequality.
\end{proof}

\begin{remark}
\noindent(1) If $n=2$, it is easy to check that \eqref{dddf} holds
with $$Q(t)=C(1+\|\nabla u_1\|^2+\|\Delta \phi_1\|^2+ \|\Delta
\theta_1\|^2+\|\nabla u_2\|^2+\|\Delta \phi_2\|^2).$$ On the other
hand, Corollary \ref{low-estimate} implies that $\int_t^{t+1} Q(s)
ds\leq C$ for all $t\geq 0$. As a consequence, we can actually
obtain the uniqueness of weak solutions to the problem
\eqref{navier-stokes}--\eqref{BC} for the two dimensional case.

(2) It is interesting to ask whether the problem
\eqref{navier-stokes}--\eqref{BC} has a weak-strong uniqueness
result in $3D$ as for the Navier--Stokes equations (c.f. Serrin
\cite{Se}). The main difficulty here is that due to the temperature-dependence
 of the surface tension coefficient $\lambda$, we lose
some dissipation in the derivation of the basic energy inequality
\eqref{basic energy inequality} in order to control the
corresponding higher-order nonlinear stress terms. When $\lambda$ is
a constant, one can obtain the weak-strong uniqueness result as in \cite{LL95} for a simplified liquid crystal system.
\end{remark}

\section{Long-time dynamics and stability}
\setcounter{equation}{0}

In this section, we shall discuss the long-time behavior of global
weak/strong solutions and stability properties of the problem
\eqref{navier-stokes}--\eqref{BC}. First, we present an alternative
result that indicates the eventual regularity of weak solutions in
$3D$ and is helpful to understand the long-time behavior of global
solutions.

\begin{proposition}
 \label{proposition on small data}
Suppose $n=3$. For any $(u_0, \phi_0, \theta_0)\in \mathbf{V}\times
 H^2(\Omega)  \times \big(H^1_0(\Omega) \cap H^2(\Omega)
\big)$ satisfying \eqref{ini} and
 \be
 \mathcal{A}_2(0)=\|\nabla u_0\|^2+a \lambda_0\|\Delta\phi_0-F'(\phi_0)\|^2+\|\Delta\theta_0\|^2 \leq R,\label{R}
 \ee
 where $R>0$ is a constant, there exists $\varepsilon_0>0$ depending on $\|u_0\|$, $\|\phi_0\|_{H^1}$,
$\|\theta_0\|_{H^1}$, $\Omega$, $R$, and coefficients of the system
such that either (i) the problem \eqref{navier-stokes}--\eqref{BC}
admits a unique global strong solution that is uniformly bounded in
time in $\mathbf{V}\times  H^2 \times (H_0^1\cap H^2)$, or (ii)
there is a $T_*\in (0, +\infty)$ such that
 $\mathcal{E}(T_*)<\mathcal{E}(0)-\varepsilon_0$.
\end{proposition}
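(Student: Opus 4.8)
The statement is an ``energy-drop dichotomy'', and the natural route is to prove its contrapositive form: if alternative (ii) fails, then alternative (i) must hold. So I would assume $\mathcal{E}(t)\geq\mathcal{E}(0)-\varepsilon_0$ for every $t$ in the interval of existence. Since $\mathcal{E}$ is nonincreasing along the flow (Proposition \ref{BEL}, Corollary \ref{low-estimate}), this hypothesis immediately bounds the total dissipation: the energy inequality gives $\int_0^T\big(\frac{\nu}{2}\|\nabla u\|^2+a\lambda_0\gamma\|\Delta\phi-F'(\phi)\|^2+k\zeta\|\Delta\theta\|^2\big)dt\leq\mathcal{E}(0)-\mathcal{E}(T)\leq\varepsilon_0$, and since this integrand dominates $c_0\mathcal{A}_2$ with a structural constant $c_0=\min\{\nu/2,\gamma,k\zeta\}>0$, I obtain $\int_0^T\mathcal{A}_2(t)\,dt\leq C\varepsilon_0$ on the whole existence interval, with $C=1/c_0$ independent of $\varepsilon_0$. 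This converts the (hypothetical) absence of an energy drop into a quantitative smallness of the full time integral of the higher-order quantity $\mathcal{A}_2$.

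The difficulty is that smallness of $\int\mathcal{A}_2$ alone cannot prevent blow-up, because the differential inequality \eqref{higher energy inequality1} carries the superlinear term $C_*\mathcal{A}_2^4$: a sharp spike may have small integral yet large amplitude. The plan is therefore first to extract one instant at which $\mathcal{A}_2$ is pointwise small, then to propagate. Using \eqref{higher energy inequality1}, comparison with the scalar ODE $z'=C_*(z^4+z)$, $z(0)=R$, and the continuation criterion behind the local theory (Theorem \ref{locstrong}), I get a length $\tau=\tau(R,C_*)>0$, bounded below in terms of $R$, together with the bound $\mathcal{A}_2(t)\leq 2R$ on $[0,\tau]$. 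Averaging the integral estimate over $[0,\tau]$ then produces a time $t_1\in(0,\tau]$ with $\mathcal{A}_2(t_1)\leq C\varepsilon_0/\tau$; fixing a threshold $\delta\in(0,1]$ and choosing $\varepsilon_0$ small (in terms of $\delta,\tau,C_*,C$, hence of $R$) forces $\mathcal{A}_2(t_1)\leq\delta$.

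For the propagation I would run a continuation argument on $[t_1,\infty)$. Setting $T^*=\sup\{t\geq t_1:\mathcal{A}_2\leq 2\delta\ \text{on}\ [t_1,t]\}$ and integrating \eqref{higher energy inequality1}, on $[t_1,T^*]$ I use $\int\mathcal{A}_2^4\leq(2\delta)^3\int\mathcal{A}_2\leq(2\delta)^3C\varepsilon_0$ and $\int\mathcal{A}_2\leq C\varepsilon_0$ to obtain
\[
\mathcal{A}_2(t)\leq\mathcal{A}_2(t_1)+C_*\big(1+(2\delta)^3\big)C\varepsilon_0\leq\delta+9C_*C\varepsilon_0<2\delta,
\]
provided $\varepsilon_0<\delta/(9C_*C)$. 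The strict inequality and continuity force $T^*=+\infty$, so $\mathcal{A}_2\leq 2R$ on all of $[0,\infty)$. Combined with the $L^\infty$-bounds on $\phi,\theta$ from Corollary \ref{low-estimate} and elliptic regularity, this uniform higher-order bound upgrades the local strong solution to a global one that is uniformly bounded in $\mathbf{V}\times H^2\times(H_0^1\cap H^2)$, while uniqueness is supplied by Proposition \ref{wsuniq}; thus (i) holds. The main obstacle is exactly this interplay: the superlinear $\mathcal{A}_2^4$ term means the two ingredients---pointwise smallness at $t_1$, extracted from a local existence window whose length $\tau(R)$ stays bounded below, and the global integral smallness---must be balanced by selecting $\delta$ first and $\varepsilon_0$ afterwards, so that the bootstrap on $[t_1,\infty)$ genuinely closes.
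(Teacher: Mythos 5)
Your proof is correct, and its skeleton coincides with the paper's: argue by contraposition, convert the absence of an energy drop into the global dissipation bound $\int_0^{\infty}\mathcal{A}_2\,dt\le C\varepsilon_0$ via \eqref{basic energy inequality}, use comparison with the ODE $Y'=C_*(Y^4+Y)$, $Y(0)=R$ from \eqref{higher energy inequality1} to secure an initial window on which $\mathcal{A}_2$ is controlled, and apply a mean-value argument on that window to locate a good recovery time. Where you genuinely diverge is in how the argument is closed. The paper takes $\varepsilon_0=\tfrac13 Rt_0\min\{\nu,\gamma,k\}$ with $t_0=\tfrac34T_{max}(R,C_*)$, so that the recovery time $t_*\in[\tfrac23t_0,t_0]$ only satisfies $\mathcal{A}_2(t_*)\le R$ --- the same level as the initial datum --- and then re-initializes the ODE comparison at $t_*$, gaining a fresh window of fixed length; iterating indefinitely, each step advancing by at least $\tfrac23 t_0$, yields the global uniform bound. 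You instead shrink $\varepsilon_0$ further so that $\mathcal{A}_2(t_1)\le\delta$ is genuinely small, and close with a single continuity/bootstrap argument on $[t_1,\infty)$, absorbing the quartic term via $\mathcal{A}_2^4\le(2\delta)^3\mathcal{A}_2$ and the smallness of $\int\mathcal{A}_2$. Both mechanisms are sound and your ordering of choices ($\delta$ first, then $\varepsilon_0$) correctly handles the superlinear term. The paper's iteration admits a larger $\varepsilon_0$ and never requires $\mathcal{A}_2$ to become small, only to return to level $R$; your one-shot bootstrap avoids the infinite iteration and the bookkeeping of overlapping windows at the price of a smaller $\varepsilon_0$, which is immaterial since the statement only asks for some $\varepsilon_0>0$ with the stated dependencies.
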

\begin{proof} The proof follows from the idea in \cite{LL95}. For the convenience of the readers, we sketch it here.
Recalling the higher-order differential inequality \eqref{higher
energy inequality1}, we consider the following initial value problem
of an ordinary differential equation:
 \be
 \frac{d}{dt}Y(t)=C_*(Y^4(t)+Y(t)),\quad
Y(0)= R\geq \mathcal{A}_2(0).\label{ODE}
 \ee
We denote by $I=[0,T_{max})$ the maximal existence interval of
$Y(t)$ such that $ \displaystyle\lim_{t\rightarrow T_{max}^-}
Y(t)=+\infty.$ It is easy to check that $$0\leq \mathcal{A}_2(t)\leq
Y(t), \ \ \ \forall \, t \in I,$$
 which indicates $\mathcal{A}(t)$ exists on $I$. We note that
 $T_{max}$ is determined by $Y(0)=R$ and $C_*$ such that $T_{max}=T_{max}(R,C_*)$ is
 increasing when $R$ is decreasing. Taking $t_0=\frac34 T_{max}(R, C_*)> 0$, then
 we have
 \be
 0\leq \mathcal{A}_2(t)\leq Y(t)\leq K, \quad \forall\, t\in [0,
 t_0],\label{uniK}
 \ee
 where $K$ is a constant that only
depends on $R, C_*, t_0$. Take \be \varepsilon_0= \frac13
Rt_0\min\{\nu, \gamma, k\}. \label{epsilon} \ee If (ii) is not true,
namely, $ \mathcal{E}(t) \geq \mathcal{E}(0)-\varepsilon_0$ for all
$t\geq 0$, we infer from \eqref{basic energy inequality} that
 \[\int_0^{+\infty}\left(\frac{\nu}{2}\|\nabla u\|^2+a\lambda_0\gamma\|\Delta\phi-F'(\phi)\|^2+k\|\Delta\theta\|^2\right)dt
 \leq \varepsilon_0.
  \]
  Hence, there exists a
$t_* \in [\frac23 t_0,t_0]$ such that
 $$ \mathcal{A}_2(t_*) \leq  \max\Big\{\frac{2}{\nu}, \frac{1}{\gamma}, \frac{1}{k} \Big\} \frac{3\varepsilon_0}{t_0}= R.$$
 Taking $t_*$ as the initial time and
$Y(t_*)=R$ in \eqref{ODE}, then it follows from the above argument
that $Y(t)$ (and thus $\mathcal{A}_2(t)$) is uniformly bounded at
least on $[0,t_*+t_0]\supset [0, \frac53 t_0]$. Its bound remains
the same as that on $[0, t_0]$. By iteration, it follows that
$\mathcal{A}_2(t)$ is uniformly bounded for $t\geq 0$. Thus, we can
extend the (unique) local strong solution to infinity to get a
global one.
\end{proof}

\begin{proposition}[Eventual regularity of weak solutions in $3D$]\label{evereg}
When $n=3$, let $(u, \phi, \theta)$ be a global weak solution of the
problem \eqref{navier-stokes}--\eqref{BC}. Then there exists a time
$T_0 \in (0, +\infty)$ such that $(u, \phi, \theta)$ becomes a
strong solution in $[T_0, +\infty)$.
\end{proposition}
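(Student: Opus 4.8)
The plan is to prove eventual regularity by combining the global-in-time dissipation estimates from Corollary \ref{low-estimate} with the alternative established in Proposition \ref{proposition on small data}. The key observation is that the energy functional $\mathcal{E}(t)$ is nonincreasing and nonnegative (by \eqref{basic energy inequality}), hence it converges to a finite limit $\mathcal{E}_\infty \geq 0$ as $t\to+\infty$. In particular, the total energy dissipated over $[0,+\infty)$ is finite, so $\mathcal{E}(t)$ cannot drop by a fixed amount $\varepsilon_0$ infinitely often.

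First I would fix the threshold $R>0$ as in Proposition \ref{proposition on small data}; this determines a corresponding $\varepsilon_0>0$ (depending on $R$, $C_*$, and the uniform bounds $M$ from Corollary \ref{low-estimate}) via \eqref{epsilon}. Since $\mathcal{E}(t)\searrow \mathcal{E}_\infty$, there exists a time $T_0\in(0,+\infty)$ such that
\be
\mathcal{E}(t)\geq \mathcal{E}_\infty \geq \mathcal{E}(s)-\varepsilon_0,\quad \forall\, t\geq s\geq T_0.\non
\ee
In other words, once $t$ is large enough, the energy can no longer decrease by more than $\varepsilon_0$ over any future interval, so alternative (ii) in Proposition \ref{proposition on small data} is ruled out for the solution shifted to start at $T_0$. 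The remaining difficulty is that Proposition \ref{proposition on small data} requires the initial datum at the new starting time to lie in $\mathbf{V}\times H^2\times(H_0^1\cap H^2)$ with $\mathcal{A}_2\leq R$; I would secure this by using the integrability $\int_{T_0}^{+\infty}\mathcal{A}_2(\tau)\,d\tau<+\infty$ (a consequence of Corollary \ref{low-estimate}) to select, via the mean-value property of the integral, a time $T_0'\geq T_0$ at which $\mathcal{A}_2(T_0')\leq R$ and the solution attains the required $H^2$-regularity. At that time the weak solution serves as a legitimate strong initial datum.

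Applying Proposition \ref{proposition on small data} with initial time $T_0'$, the conclusion (ii) fails, so conclusion (i) must hold: the solution is a unique global strong solution on $[T_0',+\infty)$, uniformly bounded in $\mathbf{V}\times H^2\times(H_0^1\cap H^2)$. Since a weak solution that coincides with a strong solution on an interval must be strong there (by the uniqueness Proposition \ref{wsuniq}, or by the uniqueness of the forward-in-time Cauchy problem), the global weak solution $(u,\phi,\theta)$ becomes strong on $[T_0',+\infty)$, and relabeling $T_0':=T_0$ gives the statement.

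The main obstacle I anticipate is the transition from weak to strong regularity at the chosen time $T_0'$: one must verify that at $T_0'$ the weak solution genuinely has the $H^2$-regularity and $\mathcal{A}_2(T_0')<+\infty$ needed to invoke Proposition \ref{proposition on small data}, rather than merely having finite time-integrated norms. This is handled by noting that the set of times at which $\mathcal{A}_2$ is finite has full measure (since its integral is finite), and selecting $T_0'$ within the regime $t\geq T_0$ where the no-further-decrease property already holds. Once a single good starting time is secured, the dichotomy of Proposition \ref{proposition on small data} does all the remaining work, and uniqueness (Proposition \ref{wsuniq}) ensures the strong solution coincides with the given weak one.
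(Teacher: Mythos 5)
Your proposal is correct and follows essentially the same route as the paper: fix $R$ and the associated $\varepsilon_0$ from Proposition \ref{proposition on small data}, use the finiteness of the total dissipation (equivalently the convergence of $\mathcal{E}(t)$ to $\mathcal{E}_\infty$) to find a late starting time after which the energy can no longer drop by $\varepsilon_0$, select by a mean-value argument a nearby time where $\mathcal{A}_2\leq R$, and invoke the dichotomy to land in alternative (i). The paper does exactly this with the explicit choices $R=1$, $t_0=\frac34 T_{max}(R,C_*)$, $\varepsilon_0=\frac13 t_0\min\{\frac{\nu}{2},\gamma,k\}$ and $T_0\in[T_1,T_1+\frac13 t_0]$.
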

\begin{proof} We simply take $$R=1,\quad t_0=\frac34 T_{max}(R, C_*),\quad  \varepsilon_0= \frac13 t_0\min\left\{\frac{\nu}{2}, \gamma, k\right\}$$
in the proof of Proposition \ref{proposition on small data}. It
follows from \eqref{basic energy inequality} that there exist a $T_1>0$
such that
$$\int_{T_1}^{+\infty}\left(\frac{\nu}{2}\|\nabla u\|^2+a\lambda_0\gamma\|\Delta\phi-F'(\phi)\|^2+k\|\Delta\theta\|^2\right)dt
 \leq \varepsilon_0.$$ Then we can find a time $T_0\in [T_1, T_1+\frac13 t_0]$ such that
 $\mathcal{A}_2(T_0)\leq 1$ and $\mathcal{E}(t)-\mathcal{E}(T_0)\geq\mathcal{E}(t)-\mathcal{E}(T_1)\geq  -\varepsilon_0$ for all $t\geq T_0$. Taking $T_0$ as the initial time, we can apply Proposition \ref{proposition on small data}. The proof is complete.
\end{proof}


\subsection{Convergence to equilibrium}
We shall show the convergence of global solutions to single steady
states as time tends to infinity. Let $(u, \phi, \theta)$ be a
global weak solution of the problem
\eqref{navier-stokes}--\eqref{BC}. We infer from either Proposition
\ref{2dreg} ($n=2$) or Proposition \ref{evereg} ($n=3$) that after a
certain time $T>0$, the weak solution will be a strong one that is
uniformly bounded in $\mathbf{V}\times  H^2 \times H^2$ for all
$t\geq T$. Since we are now considering the long-time behavior as
$t\to +\infty$, we can simply use a shift in time and reduce our
study to the case of bounded strong solutions.

The main result of this subsection is as follows:

\begin{theorem} \label{theorem on long time behavior}
Suppose $n=2, 3$. Any bounded global strong solution $(u, \phi,
\theta)$ of the problem \eqref{navier-stokes}--\eqref{BC} converges
to a steady state $(\mathbf{0}, \phi_\infty, 0)$ as time goes to
infinity such that
 \be
\lim_{t\rightarrow +\infty}
 (\|u(t)\|_{\mathbf{H}^1}+\|\phi(t)-\phi_\infty\|_{H^2}+\|\theta(t)\|_{H^2})=0,\label{cgce}
 \ee
 where $\phi_\infty$ satisfies the following nonlinear
elliptic boundary value problem:
  \be  - \Delta \phi_\infty + F'(\phi_\infty)=0, \ \ \ x \in \Omega,\quad \text{with} \ \  \phi_\infty|_\Gamma=-1.
   \label{staa}
   \ee
 Moreover, we have the convergence rate
 \be
 \|u(t)\|_{\mathbf{H}^1}+\|\phi(t)-\phi_\infty\|_{H^2}+\|\theta(t)\|_{H^2}
  \leq C(1+t)^{-\frac{\xi}{(1-2\xi)}}, \quad \forall\ t \geq
 0.\label{rate}
 \ee
 $\xi \in (0,\frac12)$ is a constant depending on
 $\phi_\infty$. Furthermore, $\theta$ satisfies an exponential decay such
 that there exists a constant $C_0=C_0(n, \Omega)>0$,
 \be
\|\theta(t)\|\leq \|\theta_0\|e^{-C_0t}, \quad \forall \ t \geq
0.\label{decaytheta1}
 \ee
\end{theorem}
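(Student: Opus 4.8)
The plan is to combine the dissipative energy inequality \eqref{basic energy inequality} with a {\L}ojasiewicz--Simon gradient inequality for the Ginzburg--Landau energy $E(\phi)=\int_\Omega\big(\frac12|\nabla\phi|^2+F(\phi)\big)\,dx$, regarding the velocity and the temperature as quantities that are slaved to the dissipation. After the reduction provided by Proposition \ref{2dreg} and Proposition \ref{evereg}, I may assume $(u,\phi,\theta)$ is uniformly bounded in $\mathbf{V}\times H^2\times (H^1_0\cap H^2)$. The exponential decay \eqref{decaytheta1} is independent of everything else: testing \eqref{temperature} by $\theta$ and using $\nabla\cdot u=0$ with $\theta|_\Gamma=0$ annihilates the convection term, so $\frac12\frac{d}{dt}\|\theta\|^2+k\|\nabla\theta\|^2=0$, and the Poincar\'e inequality $\|\nabla\theta\|^2\ge C_0\|\theta\|^2$ gives $\frac{d}{dt}\|\theta\|^2\le-2kC_0\|\theta\|^2$, hence \eqref{decaytheta1}.

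Next I would establish the structure of the $\omega$-limit set. Integrating \eqref{basic energy inequality} over $(0,\infty)$ (as in Corollary \ref{low-estimate}) gives $\int_0^{+\infty}\big(\|\nabla u\|^2+\|\Delta\phi-F'(\phi)\|^2+\|\Delta\theta\|^2\big)\,dt<+\infty$, so along some sequence $t_n\to+\infty$ all three dissipation terms vanish; together with the uniform bounds and compact Sobolev embeddings, the trajectory is precompact and every element of its $\omega$-limit set has the form $(\mathbf 0,\phi_\infty,0)$ with $\phi_\infty$ solving \eqref{staa}. Since $\mathcal{E}(t)$ from \eqref{def of total energy} is nonincreasing with a limit $\mathcal{E}_\infty\ge0$, and the velocity and temperature parts vanish in the limit, $E$ is forced to take the single value $E_\infty:=\mathcal{E}_\infty/(2a\lambda_0)$ on the whole set of limit equilibria.

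The main step is the {\L}ojasiewicz--Simon inequality. As $F$ is polynomial, $E$ is real-analytic, so near each equilibrium $\phi_\infty$ in the limit set there are $\xi\in(0,\tfrac12)$, $C>0$, $\sigma>0$ with $|E(\phi)-E_\infty|^{1-\xi}\le C\|-\Delta\phi+F'(\phi)\|$ whenever $\|\phi-\phi_\infty\|_{H^1}\le\sigma$; a covering argument over the compact set of limit equilibria makes $\xi,C$ uniform. Writing $H(t)=\mathcal{E}(t)-\mathcal{E}_\infty\ge0$ and $D^2(t)=\|\nabla u\|^2+\|\Delta\phi-F'(\phi)\|^2+\|\Delta\theta\|^2$, the inequality \eqref{basic energy inequality} reads $-\frac{dH}{dt}\ge c\,D^2$. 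The key observation is that the non-gradient parts of $H$ are quadratically dominated by $D$: Poincar\'e gives $\|u\|^2\le CD^2$, while $\|\nabla\theta\|\le C_P\|\Delta\theta\|$ yields $\|\theta\|_{H^1}^2\le C\|\Delta\theta\|^2\le CD^2$. Since $H\le\|u\|^2+2a\lambda_0|E(\phi)-E_\infty|+\zeta\|\nabla\theta\|^2+\omega\|\theta\|^2$, subadditivity of $x\mapsto x^{1-\xi}$, boundedness of $D$ (so that $D^{2(1-\xi)}\le CD$ as $2(1-\xi)\ge1$), and the {\L}ojasiewicz bound for the phase part combine to give the crucial inequality $H^{1-\xi}\le C\,D$.

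Combining $-\frac{dH}{dt}\ge cD^2$ with $H^{1-\xi}\le CD$ yields, on the interval where $\phi$ stays in the {\L}ojasiewicz neighborhood, $-\frac{d}{dt}H^{\xi}\ge c'D\ge c''\big(\|\nabla u\|+\|\Delta\phi-F'(\phi)\|+\|\Delta\theta\|\big)$; since $H^\xi$ is bounded, a standard argument shows the orbit cannot leave the neighborhood for large $t$, and integration gives $\int_T^{+\infty}\big(\|\nabla u\|+\|\Delta\phi-F'(\phi)\|+\|\Delta\theta\|\big)\,dt<+\infty$. Through $\phi_t=\gamma(\Delta\phi-F'(\phi))-u\cdot\nabla\phi$ this makes $\phi(t)$ Cauchy in $L^2$, identifying $\phi_\infty$ uniquely; elliptic regularity applied to $\Delta(\phi-\phi_\infty)=(\Delta\phi-F'(\phi))+(F'(\phi)-F'(\phi_\infty))\to0$ in $L^2$ with zero boundary data upgrades this to $H^2$, and precompactness with a unique limit point forces $\|u\|_{\mathbf H^1}\to0$ and $\|\theta\|_{H^2}\to0$, proving \eqref{cgce}. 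For the rate \eqref{rate}, the two inequalities give directly $\frac{dH}{dt}\le -cH^{2(1-\xi)}$; as $2(1-\xi)>1$ this integrates to $H(t)\le C(1+t)^{-1/(1-2\xi)}$, whence $\int_t^{+\infty}D\,ds\le CH^\xi(t)\le C(1+t)^{-\xi/(1-2\xi)}$ controls the decay of each component. I expect the genuine obstacle to be twofold: proving the {\L}ojasiewicz--Simon inequality for the nonhomogeneous Dirichlet problem \eqref{staa} and making it uniform over the (possibly continuous) set of limit equilibria, and the careful bookkeeping that lets the velocity and temperature---which are not gradient-flow components---be absorbed into $H^{1-\xi}\le CD$ without spoiling the exponent $\xi$.
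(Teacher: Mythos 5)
Your overall strategy is the one the paper follows: exponential decay of $\theta$ by testing \eqref{temperature} with $\theta$ (this is exactly how \eqref{decaytheta1} comes from \eqref{part2 of basic energy law}), characterization of the $\omega$-limit set from the integrated energy inequality, the \L ojasiewicz--Simon inequality for $E$, the chain $H^{1-\xi}\leq CD$, $-\frac{d}{dt}H^{\xi}\geq cD$, integrability of $\|\phi_t\|$, and the ODE $H'\leq -cH^{2(1-\xi)}$ for the energy rate. Two harmless deviations: you propose a covering argument to make $\xi$ uniform over the limit set, whereas the paper fixes a single $\phi_\infty$ from a convergent subsequence and uses the Jendoubi trapping argument (Proposition \ref{proposition on trajectory}); and you state the gradient inequality in $L^2$ while the paper's Lemma \ref{ls} is in $H^{-1}$ (the $L^2$ version is weaker but sufficient, since $\|\cdot\|_{H^{-1}}\leq C\|\cdot\|$).

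The genuine gap is the last step, where you assert that $\int_t^{+\infty}D\,ds\leq CH^{\xi}(t)\leq C(1+t)^{-\xi/(1-2\xi)}$ ``controls the decay of each component.'' An integral bound on $D$ over $(t,+\infty)$ gives the pointwise rate only for $\|\phi(t)-\phi_\infty\|_{L^2}$ (via $\int\|\phi_t\|$, i.e.\ \eqref{rate1}); it does not by itself yield pointwise decay of $\|\nabla u(t)\|$, $\|\Delta\theta(t)\|$, $\|\Delta\phi(t)-F'(\phi(t))\|$, nor of $\|\phi(t)-\phi_\infty\|_{H^1}$, at that same rate --- an integrable nonnegative function need not decay, and interpolating $\|\varphi\|_{H^1}\leq C\|\varphi\|^{1/2}\|\varphi\|_{H^2}^{1/2}$ from the $L^2$ rate halves the exponent. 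The paper closes this by a separate construction: it introduces the lower-order functional $\mathcal{Y}(t)$ in \eqref{def of Y(t)} (built from the equations for $\varphi=\phi-\phi_\infty$), couples it with the higher-order quantity $\mathcal{A}_i(t)$ of Lemma \ref{high2d}/\ref{high3da}, and derives the linear differential inequality $\frac{d}{dt}[\mathcal{Y}+\eta\mathcal{A}_i]+C'[\mathcal{Y}+\eta\mathcal{A}_i]\leq C(\|\varphi\|^2+\|\theta\|^2)$, whose forcing decays like $(1+t)^{-2\xi/(1-2\xi)}$ by \eqref{rate1} and \eqref{decaytheta1}; integrating (see \eqref{conAB}) gives the squared higher-order norms at rate $(1+t)^{-2\xi/(1-2\xi)}$ and hence \eqref{rate}. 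A workable alternative within your framework is to use $\int_t^{+\infty}D^2\,ds\leq CH(t)\leq C(1+t)^{-1/(1-2\xi)}$ together with $\frac{d}{dt}\mathcal{A}_i\leq C\mathcal{A}_i$ (valid for bounded solutions) and the uniform Gronwall lemma to get pointwise decay of $D(t)$, then recover $\|\phi-\phi_\infty\|_{H^2}$ from \eqref{kkk} and elliptic regularity; but some such additional argument must be supplied --- the proposal as written does not prove \eqref{rate}.
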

 \br\label{rrr}
 Decay properties of the velocity $u$ and temperature $\theta$ as time tends to infinity can be obtained by the energy method (see Proposition
\ref{proposition on decay property} below). However, convergence for the phase function $\phi$ is usually nontrivial
because the structure of the set of equilibria may be complicated
and the solutions to elliptic problem like \eqref{staa} may form a
continuum if the spatial dimension $n\geq 2$  (cf. e.g., Haraux \cite[Remark
2.3.13]{HA91}). As we have mentioned in the introduction, our results and their proofs hold for general Dirichlet boundary data for the phase function such that $-1$  in \eqref{BC2} can be replaced by a certain generic function $h(x)$. Since our problem enjoys a dissipative energy
inequality \eqref{basic energy inequality}, we can achieve the goal
by using the \L ojasiewicz--Simon approach (cf. e.g., \cite{J981, HJ99, GG10,
S83}). One advantage of this approach is that we can obtain the convergence result without investigating the structure of equilibria.
 \er

 The $\omega$-limit set of $(u_0, \phi_0, \theta_0)\in
\mathbf{V}\times H^2(\Omega) \times \big(H^1_0(\Omega) \cap
H^2(\Omega)\big)$ is defined as follows:
 \bea
 \omega(u_0, \phi_0, \theta_0) &= &\{ (u_\infty(x), \phi_\infty(x), \theta_\infty(x))\in \mathbf{V}\times
 H^2 \times (H^2\cap H_0^1):\non\\
 && \text{there
 \ exists\ } \{t_n\}\nearrow \infty  \text{\ such\ that\ } \non\\&&
 \ (u(t_n), \phi(t_n), \theta(t_n)) \rightarrow (u_\infty, \phi_\infty, \theta_\infty)\
 \text{in}\ \mathbf{L}^2 \times H^1 \times H^1\}.\non
 \eea
 \begin{proposition} \label{proposition on decay property}
Let $n=2,3$. For any global strong solutions to problem
\eqref{navier-stokes}--\eqref{BC}, there exists
$\mathcal{E}_\infty\geq 0$ such that \be
\lim_{t\to+\infty}\mathcal{E}(t)=\mathcal{E}_\infty, \label{conE}
\ee and it holds
 \be \lim_{t\rightarrow +\infty} (\|u(t)\|_{\mathbf{H}^1}+ \|\Delta \phi(t)-F'(\phi(t))\|+\|\Delta\theta(t)\|)=0.\label{conut}
 \ee
\end{proposition}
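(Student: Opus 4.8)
The plan is to treat the two assertions of the proposition separately: the convergence of the total energy $\mathcal{E}(t)$ is a pure monotonicity statement, while the decay of the dissipation $\|u\|_{\mathbf{H}^1}+\|\Delta\phi-F'(\phi)\|+\|\Delta\theta\|$ combines the $L^1$-in-time integrability produced by Proposition \ref{BEL} with a one-sided bound on the time derivative of the higher-order energy. For \eqref{conE}, I would simply observe that the dissipative energy inequality \eqref{basic energy inequality} shows that $t\mapsto\mathcal{E}(t)$ is non-increasing, whereas \eqref{def of total energy} shows $\mathcal{E}(t)\geq 0$. A monotone function that is bounded below converges, so $\mathcal{E}(t)\to\mathcal{E}_\infty\geq 0$ as $t\to+\infty$; this step is immediate.

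For \eqref{conut}, I would set $\mathcal{A}(t)=\mathcal{A}_1(t)$ when $n=2$ (see \eqref{def of higher order energy}) and $\mathcal{A}(t)=\mathcal{A}_2(t)$ when $n=3$ (see \eqref{def of higher order energy3d}); in either case $\mathcal{A}$ is comparable, up to fixed positive constants, to the target quantity $\|\nabla u\|^2+\|\Delta\phi-F'(\phi)\|^2+\|\Delta\theta\|^2$. Two facts are then required. First, Corollary \ref{low-estimate} gives $\mathcal{A}\in L^1(0,+\infty)$. Second, since $(u,\phi,\theta)$ is a \emph{bounded} strong solution, $\mathcal{A}(t)\leq C_0$ for all $t\geq 0$; inserting this into the higher-order differential inequality \eqref{higher energy inequality} (for $n=2$) or \eqref{higher energy inequality1} (for $n=3$) and discarding the nonnegative dissipation terms on the left-hand side yields $\frac{d}{dt}\mathcal{A}(t)\leq C(\mathcal{A}^p(t)+\mathcal{A}(t))\leq C(C_0^{p-1}+1)\mathcal{A}(t)=:g(t)$, where $p=2$ or $p=4$. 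Because $\mathcal{A}\in L^1$, it follows that $g\in L^1(0,+\infty)$ as well.

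It then remains to invoke the elementary fact that a nonnegative $\mathcal{A}\in L^1(0,+\infty)$ satisfying $\frac{d}{dt}\mathcal{A}\leq g$ with $g\in L^1(0,+\infty)$ must obey $\mathcal{A}(t)\to 0$. I would prove this in one line by setting $z(t)=\mathcal{A}(t)-\int_0^t g(s)\,ds$: then $z'\leq 0$, so $z$ is non-increasing and bounded below (as $\mathcal{A}\geq 0$ and $\int_0^t g\leq\|g\|_{L^1}$), hence $z$ converges, and therefore so does $\mathcal{A}=z+\int_0^t g$; an $L^1$ function that has a limit must have limit $0$. Taking square roots and using $\|u\|_{\mathbf{H}^1}\leq C\|\nabla u\|$ (Poincar\'e, since $u\in\mathbf{V}$) then gives \eqref{conut}.

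The main subtlety to flag is that the time derivative of $\mathcal{A}$ is controlled only \emph{from above}: a lower bound would involve the higher dissipation $\nu\|Su\|^2$, $\|\nabla(\Delta\phi-F'(\phi))\|^2$ and $\|\nabla\Delta\theta\|^2$, which are merely integrable in time and need not be bounded pointwise, so $\mathcal{A}$ need not be uniformly continuous. This is precisely why one must use the one-sided version of the convergence lemma rather than the familiar ``$L^1$ plus uniform continuity'' argument, and it is also the point at which the hypothesis that the strong solution is bounded becomes essential, since boundedness is exactly what upgrades the right-hand side $C(\mathcal{A}^p+\mathcal{A})$ to an $L^1(0,+\infty)$ majorant.
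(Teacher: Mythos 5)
Your proposal is correct and follows essentially the same route as the paper: monotonicity plus nonnegativity for \eqref{conE}, and for \eqref{conut} the combination of $\mathcal{A}_i\in L^1(0,+\infty)$ (from Corollary \ref{low-estimate}) with a one-sided bound on $\frac{d}{dt}\mathcal{A}_i$ obtained from Lemma \ref{high2d} or Lemma \ref{high3da} and the uniform bound on $\mathcal{A}_i$. The only cosmetic difference is that the paper invokes the convergence lemma from Zheng \cite[Lemma 6.2.1]{Z04} with the majorant taken to be a constant, whereas you prove the needed one-sided convergence lemma inline with the $L^1$ majorant $C'\mathcal{A}$; both are valid.
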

\begin{proof}
The total energy $\mathcal{E}(t)$ is nonnegative and decreasing as
$t$ increases (cf. \eqref{basic energy inequality}). Then
\eqref{conE} easily follows. For global bounded strong solution $(u,
\phi, \theta)$, we have $\mathcal{A}_1(t)\leq C$ ($n=2$) or
$\mathcal{A}_2(t)\leq C$ ($n=3$), then it follow from Lemma
\ref{high2d} ($n=2$) or Lemma \ref{high3da} ($n=3$) that $\frac{d
\mathcal{A}_i(t)}{dt} \leq C$ ($i=1,2$). On the other hand, we have
known from \eqref{basic energy inequality} that $\mathcal{A}_i(t)
\in L^1(0, +\infty)$. As a result, we can infer from Zheng
\cite[Lemma 6.2.1]{Z04} that $\lim_{t\to
+\infty}\mathcal{A}_i(t)=0$, which yields \eqref{conut}.
\end{proof}
 \begin{corollary} \label{proposition on omega lim set}
 $\omega(u_0, \phi_0, \theta_0)$ is a nonempty bounded subset in $\mathbf{V}\times
 H^2 \times (H^2\cap H_0^1)$.
 Moreover, $\omega(u_0, \phi_0, \theta_0)\subset \mathcal{S}=\big\{(\mathbf{0}, \tilde{\phi}, 0)\big| -\Delta\tilde{\phi}+F'(\tilde{\phi})=0 \ \mbox{in} \ \Omega,
\ \tilde{\phi}|_{\Gamma}=-1    \big\}$ and $\mathcal{E}=\mathcal{E}_\infty$ on $\omega(u_0, \phi_0, \theta_0)$.
 \end{corollary}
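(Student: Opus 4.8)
The plan is to combine the compactness furnished by the uniform higher-order bounds with the convergence statements already obtained in Proposition \ref{proposition on decay property}. Since we deal with a bounded global strong solution, the orbit $\{(u(t),\phi(t),\theta(t)):t\geq 0\}$ is bounded in $\mathbf{V}\times H^2\times(H^2\cap H_0^1)$. As $\Omega$ is a bounded smooth domain, the embeddings $\mathbf{V}\hookrightarrow\mathbf{L}^2$ and $H^2\hookrightarrow H^1$ are compact (Rellich--Kondrachov), so for any sequence $t_n\nearrow\infty$ one may extract a subsequence along which $(u(t_n),\phi(t_n),\theta(t_n))$ converges in $\mathbf{L}^2\times H^1\times H^1$ to a limit $(u_\infty,\phi_\infty,\theta_\infty)$. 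This already shows $\omega(u_0,\phi_0,\theta_0)\neq\emptyset$. Passing to a further subsequence yields weak convergence in $\mathbf{V}\times H^2\times(H^2\cap H_0^1)$, and by weak lower semicontinuity of the norms the limit obeys the same uniform bounds; hence $\omega(u_0,\phi_0,\theta_0)$ is bounded in $\mathbf{V}\times H^2\times(H^2\cap H_0^1)$.

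Next I would identify every element of the $\omega$-limit set. Proposition \ref{proposition on decay property} gives $\|u(t)\|_{\mathbf{H}^1}\to 0$ and $\|\Delta\theta(t)\|\to 0$ as $t\to+\infty$. Since $\theta\in H^2\cap H_0^1$ and $-\Delta$ with homogeneous Dirichlet data is an isomorphism from $H^2\cap H_0^1$ onto $L^2$, the decay $\|\Delta\theta(t)\|\to 0$ forces $\|\theta(t)\|_{H^2}\to 0$; consequently $u_\infty=\mathbf{0}$ and $\theta_\infty=0$ for any limit point. It remains to prove that $\phi_\infty$ solves the stationary problem \eqref{staa}. Here I would invoke $\|\Delta\phi(t)-F'(\phi(t))\|\to 0$ (again from Proposition \ref{proposition on decay property}). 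Along the defining subsequence, $\phi(t_n)\to\phi_\infty$ strongly in $H^1$ and, up to a further subsequence, weakly in $H^2$, so $\Delta\phi(t_n)\rightharpoonup\Delta\phi_\infty$ in $L^2$. Because $|\phi|\le 1$ a.e.\ (Lemma \ref{mphi}) and $F'$ is smooth, the strong $H^1$ (hence $L^2$) convergence gives $F'(\phi(t_n))\to F'(\phi_\infty)$ in $L^2$. Writing $\Delta\phi(t_n)=\big(\Delta\phi(t_n)-F'(\phi(t_n))\big)+F'(\phi(t_n))$ and letting $n\to+\infty$ yields $\Delta\phi_\infty=F'(\phi_\infty)$ in $L^2$, i.e.\ $-\Delta\phi_\infty+F'(\phi_\infty)=0$. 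The boundary value $\phi_\infty|_\Gamma=-1$ is inherited from $\phi(t_n)|_\Gamma=-1$ by continuity of the trace operator on $H^1$. Thus $(u_\infty,\phi_\infty,\theta_\infty)=(\mathbf{0},\phi_\infty,0)\in\mathcal{S}$, proving $\omega(u_0,\phi_0,\theta_0)\subset\mathcal{S}$.

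Finally, to show $\mathcal{E}\equiv\mathcal{E}_\infty$ on the $\omega$-limit set I would use the continuity of the energy functional $\mathcal{E}$ from \eqref{def of total energy} with respect to the $\mathbf{L}^2\times H^1\times H^1$ topology: each term, namely $\|u\|^2$, $\|\nabla\phi\|^2$, $\int_\Omega F(\phi)\,dx$ (continuous since $|\phi|\le 1$ and $F$ is continuous), $\|\nabla\theta\|^2$ and $\|\theta\|^2$, is continuous under this convergence. Hence $\mathcal{E}(t_n)\to\mathcal{E}(u_\infty,\phi_\infty,\theta_\infty)$. On the other hand \eqref{conE} gives $\mathcal{E}(t_n)\to\mathcal{E}_\infty$, and by uniqueness of limits $\mathcal{E}(u_\infty,\phi_\infty,\theta_\infty)=\mathcal{E}_\infty$ for every element of $\omega(u_0,\phi_0,\theta_0)$.

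The only genuinely delicate point is the identification of the limiting elliptic equation for $\phi_\infty$: one must reconcile the \emph{weak} $H^2$-convergence of $\Delta\phi(t_n)$ with the \emph{strong} convergence needed to pass to the limit in the nonlinear term $F'(\phi(t_n))$. The uniform bound $|\phi|\le 1$ from Lemma \ref{mphi} is exactly what renders this passage harmless, since it confines $\phi$ to a compact interval on which $F'$ is Lipschitz. All the remaining steps are routine applications of compact Sobolev embeddings and weak lower semicontinuity.
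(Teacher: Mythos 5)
Your proof is correct and follows exactly the route the paper intends (the corollary is stated without an explicit proof there): uniform boundedness of the strong solution in $\mathbf{V}\times H^2\times(H^2\cap H_0^1)$ plus compact Sobolev embeddings for nonemptiness and boundedness, the decay statements \eqref{conut} together with the $L^\infty$-bound from Lemma \ref{mphi} to identify limit points as elements of $\mathcal{S}$, and \eqref{conE} with continuity of $\mathcal{E}$ in the $\mathbf{L}^2\times H^1\times H^1$ topology for the constancy of the energy on the $\omega$-limit set. No gaps.
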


It is easy to verify that a critical point of the elastic energy
$E(\phi)$ given by \eqref{elastic} is equivalent to a solution to
the following elliptic boundary value problem \be  - \Delta \phi +
F'(\phi)=0,\quad x\in \Omega, \quad
   \phi|_\Gamma=-1.
   \label{staaq}
 \ee
We recall the following \L
 ojasiewicz--Simon type inequality (cf. Haraux-Jendoubi \cite{HJ99}).
 \bl [\L ojasiewicz--Simon inequality] \label{ls}
 Let $\psi$ be a critical point of $E(\phi)$. Then there exist constants
 $\xi\in(0,\frac12)$ and $\beta>0$ depending on $\psi$ such that
 for any $\phi\in H^1(\Omega)$, $\phi|_{\Gamma}=-1$ satisfying $\|\phi-\psi\|_{H^1(\Omega)}<\beta$, it
 holds
 \be
 \|-\Delta \phi+F'(\phi)\|_{H^{-1}}\geq
 |E(\phi)-E(\psi)|^{1-\xi}.\label{LSQ}
 \ee
 \el
 For any global bounded strong
solution, it follows from Corollary \ref{proposition on omega lim
set} that there is an increasing unbounded sequence
$\{t_n\}_{n\in\mathbb{N}}$ and a function $\phi_\infty\in
\mathcal{S}$ such that
   \be \lim_{t_n\rightarrow +\infty} \|\phi(t_n)-\phi_\infty\|_{H^1}
   =0. \label{secon}
   \ee
 As a result, we infer from Proposition \ref{proposition on decay property} and \eqref{secon} that
 \be \lim_{t_n\rightarrow +\infty} \mathcal{E}(t_n)=a\lambda_0E(\phi_\infty)=\mathcal{E}_\infty \
 \ \mbox{and} \ \ \mathcal{E}(t) \geq a\lambda_0 E(\phi_\infty), \ \forall \ t >
 0.
 \ee
If $\mathcal{E}(t_*)=a\lambda_0 E(\phi_\infty)$ for some $t_* > 0$,
then $\mathcal{E}(t)=\mathcal{E}_\infty$ for all $t\geq 0$. Thus, by
\eqref{basic energy inequality}, we have $
\|u(t)\|_{\mathbf{V}}=\|\theta(t)\|_{H^2}=\|\Delta\phi(t)-F'(\phi(t))\|
= 0$ for $t \geq t_0$. Besides, it follows from equation
\eqref{phase} that
 \be \|\phi_t\| \leq \|u\|_{\mathbf{L}^4}\|\nabla\phi\|_{\mathbf{L}^4}+\|\Delta\phi-F'(\phi)\| \leq C(\|\nabla u\|+\|\Delta\phi-F'(\phi)\|),\label{phit}
\ee thus $\|\phi_t(t)\|=0$ for $t \geq t_*$ and due to
\eqref{secon}, we have $\phi(t)=\phi_\infty$ for $t \geq t_0$.

Then we only have to consider the case
$\mathcal{E}(t)>\mathcal{E}_\infty= a\lambda_0 E(\phi_\infty)$, for
all $ t > 0$. Based on the sequential convergence \eqref{secon} and
the \L ojasiewicz--Simon inequality, by using the classical argument
in Jendoubi \cite{J981}, we can shown that after a certain time, the
trajectory $\phi(t)$ will fall into a certain small neighborhood of
$\phi_\infty$ and stay there for all time. Namely,

\begin{proposition} \label{proposition on trajectory}
There is a $t_0 > 0$, such that $\|\phi(t)-\phi_\infty\|_{H^1} <
\beta$, for all $t \geq t_0$.
\end{proposition}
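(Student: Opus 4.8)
The plan is to carry out the \L ojasiewicz--Simon argument of Jendoubi \cite{J981}, adapted to the fact that the total energy $\mathcal{E}$ carries kinetic and thermal contributions in addition to the elastic energy $E(\phi)$, which is the only part Lemma \ref{ls} directly controls. The idea is to differentiate $(\mathcal{E}(t)-\mathcal{E}_\infty)^\xi$ in time, bound its derivative from below by a multiple of $\|\phi_t\|$ as long as $\phi(t)$ stays in the $\beta$-ball around $\phi_\infty$, and then combine the resulting finiteness of $\int\|\phi_t\|\,dt$ with the sequential convergence \eqref{secon} in a continuity argument that traps the trajectory in that ball.

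First I would write $\frac{d}{dt}(\mathcal{E}(t)-\mathcal{E}_\infty)^\xi=\xi(\mathcal{E}(t)-\mathcal{E}_\infty)^{\xi-1}\mathcal{E}'(t)$ and invoke \eqref{basic energy inequality}, which gives $-\mathcal{E}'(t)\ge c\,D(t)$ with $D(t)=\|\nabla u\|^2+\|\Delta\phi-F'(\phi)\|^2+\|\Delta\theta\|^2$. The crucial step is to bound the prefactor $(\mathcal{E}(t)-\mathcal{E}_\infty)^{1-\xi}$ above by $C\big(\|\nabla u\|+\|\Delta\phi-F'(\phi)\|+\|\Delta\theta\|\big)$. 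From \eqref{def of total energy} one has $\mathcal{E}-\mathcal{E}_\infty=\|u\|^2+2a\lambda_0\big(E(\phi)-E(\phi_\infty)\big)+\zeta\|\nabla\theta\|^2+\omega\|\theta\|^2$. I would handle the elastic part by Lemma \ref{ls}, using $\|-\Delta\phi+F'(\phi)\|_{H^{-1}}\le C\|\Delta\phi-F'(\phi)\|$ to get $|E(\phi)-E(\phi_\infty)|^{1-\xi}\le C\|\Delta\phi-F'(\phi)\|$ while $\phi$ remains in the $\beta$-neighborhood; for the remaining terms I would use the Poincar\'e inequalities $\|u\|\le C\|\nabla u\|$ and $\|\theta\|,\|\nabla\theta\|\le C\|\Delta\theta\|$ together with $2(1-\xi)>1$ and the uniform boundedness of $u$ and $\theta$, converting for instance $\|u\|^{2(1-\xi)}\le C\|u\|\le C\|\nabla u\|$, and likewise for the thermal quantities.

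Combining these with $(a+b+c)^2\le 3(a^2+b^2+c^2)$ yields, on any interval where $\|\phi(t)-\phi_\infty\|_{H^1}<\beta$, the differential inequality $-\frac{d}{dt}(\mathcal{E}-\mathcal{E}_\infty)^\xi\ge c\big(\|\nabla u\|+\|\Delta\phi-F'(\phi)\|+\|\Delta\theta\|\big)\ge c'\|\phi_t\|$, where the last step uses \eqref{phit}; integrating gives $\int_{t_0}^t\|\phi_t\|\,ds\le C(\mathcal{E}(t_0)-\mathcal{E}_\infty)^\xi$. Now I would set up the continuity argument: using \eqref{secon} and $\mathcal{E}(t_n)\to\mathcal{E}_\infty$, choose $t_0=t_n$ so large that $\|\phi(t_0)-\phi_\infty\|_{H^1}<\beta/3$ and $C(\mathcal{E}(t_0)-\mathcal{E}_\infty)^\xi$ is as small as desired. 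Let $\bar t=\sup\{t\ge t_0:\|\phi(s)-\phi_\infty\|_{H^1}<\beta\ \text{for all}\ s\in[t_0,t]\}$ and suppose $\bar t<\infty$. On $[t_0,\bar t]$ the above inequality is valid, so $\int_{t_0}^{\bar t}\|\phi_t\|\,ds$ is small; interpolating $\|\phi(\bar t)-\phi(t_0)\|_{H^1}\le C\|\phi(\bar t)-\phi(t_0)\|^{1/2}\|\phi(\bar t)-\phi(t_0)\|_{H^2}^{1/2}$ and using the uniform $H^2$-bound turns this into smallness of the $H^1$-distance travelled, so that $\|\phi(\bar t)-\phi_\infty\|_{H^1}<\beta$, contradicting the definition of $\bar t$. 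Hence $\bar t=+\infty$, which is the claim.

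The main obstacle is precisely the coupling: the non-gradient kinetic and thermal parts of $\mathcal{E}-\mathcal{E}_\infty$ must be absorbed into the dissipation $D(t)$ so that the scalar \L ojasiewicz inequality for $E(\phi)$ alone still forces the \emph{full} energy $\mathcal{E}$ down to $\mathcal{E}_\infty$. The delicate points are the exponent bookkeeping $2(1-\xi)>1$ that allows $L^2$-type quantities to be traded for the square-integrable dissipation, and the final passage from $L^2$- to $H^1$-smallness through interpolation against the uniform higher-order bounds.
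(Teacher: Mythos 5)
Your proposal is correct and is essentially the paper's own route: the paper simply invokes ``the classical argument in Jendoubi'' together with the \L ojasiewicz--Simon inequality and the sequential convergence \eqref{secon}, and what you have written out (differentiating $(\mathcal{E}-\mathcal{E}_\infty)^{\xi}$, absorbing the kinetic and thermal parts via Poincar\'e and the exponent $2(1-\xi)>1$, integrating to control $\int\|\phi_t\|\,dt$, and closing with the continuity/contradiction argument plus interpolation against the uniform $H^2$-bound) is exactly the detailed version of that argument, matching the displayed estimates the paper gives immediately after the proposition.
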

\noindent Thus, for all $t \geq t_0$, $\phi(t)$ fulfills the
condition in Lemma \ref{ls}. Since $(1-\xi)
> \frac12$, then we infer from \eqref{LSQ} that
 \bea \big(\mathcal{E}(t)-a\lambda_0 E(\phi_\infty) \big)^{1-\xi} &\leq&
\Big(\|u\|^2+\zeta\|\nabla\theta\|^2+\omega\|\theta\|^2+a\lambda_0|E(\phi)-E(\phi_\infty)|
\Big)^{1-\xi} \non\\
&\leq&
C(\|u\|^2+\|\nabla\theta\|^2+\|\theta\|^2)^{1-\xi}+C|E(\phi)-E(\phi_\infty)|^{1-\xi}
\non\\
&\leq& C\|u\|+C\|\nabla\theta\|+C\|\Delta\phi-F'(\phi)\|, \non
 \eea which
combined with the energy inequality \eqref{basic energy
inequality} yields that for $t \geq t_0$, it holds
  \bea
-\frac{d}{dt}\big(\mathcal{E}(t)-\mathcal{E}_\infty \big)^{\xi}
&=&-\xi\big(\mathcal{E}(t)-a\lambda_0 E(\phi_\infty)
\big)^{\xi-1}\frac{d
\mathcal{E}}{dt} \non\\
&\geq& C\xi \frac{\|\nabla
u\|^2+\|\Delta\phi-F'(\phi)\|^2+\|\Delta\theta\|^2}{\|u\|+\|\nabla
\theta\|+\|\Delta\phi-F'(\phi)\|_{H^{-1}}}
\non\\
&\geq& C\big( \|\nabla u\|+\|\Delta\phi-F'(\phi)\|+\|\Delta\theta\|
\big). \label{application of LS}
 \eea
Integrating \eqref{application of LS} with respect to $t$, using
\eqref{phit} and the fact $\mathcal{E}(t) > \mathcal{E}_\infty$, we
have
  \be
 \int_{t_0}^\infty \|\phi_t(t)\| dt
 \leq C\int_{t_0}^\infty (\|\nabla u\|+\|\Delta\phi-F'(\phi)\|) dt \leq C(\mathcal{E}(t_0)-\mathcal{E}_\infty )^\xi<
 +\infty,\non
\ee which combined with the compactness of $\phi$ in $H^1$ yields
that $\lim_{t \rightarrow +\infty} \|\phi(t)-\phi_\infty\|_{H^1} =
0$.
Furthermore, since
 \bea
\|\Delta\phi-\Delta\phi_\infty\| &\leq&
\|\Delta\phi-\Delta\phi_\infty-F'(\phi)+F'(\phi_\infty)\|+\|F'(\phi)-F'(\phi_\infty)\|
\non\\
&\leq&\|\Delta\phi-F'(\phi)\|+\|\phi(t)-\phi_\infty\|_{H^1},
\label{kkk}
  \eea
we conclude  from \eqref{conut} that
 \be \displaystyle\lim_{t \rightarrow
+\infty} \|\phi(t)-\phi_\infty\|_{H^2} = 0.\non
 \ee
It remains to prove the convergence rate \eqref{rate}. By Lemma
\ref{ls} and \eqref{application of LS}, we obtain that
 \be
\frac{d}{dt}\big(\mathcal{E}(t)-\mathcal{E}_\infty\big)+
C\big(\mathcal{E}(t)-\mathcal{E}_\infty \big)^{2(1-\xi)}\leq 0,
\quad \forall\, t\geq t_0,   \non
 \ee which implies the decay rate for the total energy  $\mathcal{E}$
 \[ 0\leq \mathcal{E}(t)-\mathcal{E}_\infty\leq
 C(1+t)^{-\frac{1}{1-2\xi}},\quad \forall\, t\geq
 t_0. \]
Integrating \eqref{application of LS} on $(t,+\infty)$, where $t\geq
t_0$,  it follows from \eqref{phit} that
 \bea
 \int_t^{+\infty} \|\phi_t(\tau)\| d\tau
 &\leq&C(\mathcal{E}(t)-\mathcal{E}_\infty )^\xi \leq
 C(1+t)^{-\frac{\xi}{1-2\xi}}.\non
  \eea
Adjusting the constant $C$ properly, we get
 \be
    \|\phi(t)-\phi_\infty\|\leq C(1+t)^{-\frac{\xi}{1-2\xi}}, \quad \forall \, t\geq 0.\label{rate1}
 \ee
Higher-order estimates on the convergence rate can be achieved by
constructing proper differential inequalities via energy method. It
is clear that for the asymptotic limit $(\mathbf{0}, \phi_\infty,
0)$, the system \eqref{navier-stokes}--\eqref{temperature} is
reduced to \bea
 \nabla P_\infty+\frac{1}{2}\nabla\big(|\nabla \phi_\infty|^2\big)&=&-\nabla \phi_\infty\cdot \Delta \phi_\infty,\label{sta1}\\
 -\Delta \phi_\infty+F'(\phi_\infty)&=&0,\ \ \text{with} \ \phi_\infty|_{\Gamma}=-1.\label{sta2}
 \eea
Denote $\varphi=\phi-\phi_\infty$. Then $(u, \varphi, \theta)$
satisfies
 \bea
&&u_t+u\cdot\nabla u+\nabla
\tilde{p}-\nu\Delta{u} \non\\
&=&-a\lambda_0(\Delta\varphi\nabla\phi+
\Delta\phi_\infty\nabla\varphi)
+b\lambda_0\nabla\cdot(\theta\nabla\phi\otimes\nabla\phi)
+\alpha\theta g\mathbf{j},
\label{11}   \\
&&\nabla\cdot u=0, \label{11a} \\
&&\varphi_t+u\cdot\nabla\phi=\gamma\Delta\varphi-\gamma\big(F'(\phi)-F'(\phi_\infty)\big),
\label{22}   \\
&&\theta_t+u\cdot\nabla\theta=k\Delta\theta, \label{33}
 \eea
where we absorb all those gradient terms into the modified pressure
$\tilde{p}$.

 Multiplying \eqref{11} by $u$, \eqref{22} by
$a\lambda_0(-\Delta\varphi+\big(F'(\phi)-F'(\phi_\infty)\big))+\varphi$
and \eqref{33} by $-\Delta\theta$ respectively, integrating over
$\Omega$ and adding them together, we have
 \bea
 &&\frac12\frac{d}{dt} \mathcal{Y}(t)+\nu\|\nabla{u}\|^2+a\lambda_0\gamma\|\Delta \phi-F'(\phi)\|^2+\gamma\|\nabla \varphi\|^2
 +k\|\Delta\theta\|^2 \non\\
 &=&
 -b\lambda_0\int_{\Omega}\theta\nabla_i\phi\nabla_j\phi\nabla_ju_i\,dx+\alpha g(\theta \mathbf{j},
 u)+ (u\cdot \nabla \theta, \Delta \theta)\non\\
 &&  -(u\cdot\nabla\phi,
\varphi)-\gamma\big(F'(\phi)-F'(\phi_\infty),
\varphi \big) \non\\
 &:=&\sum_{m=1}^5 R_m,
 \label{ra1}
 \eea
 where
 \be
   \mathcal{Y}(t)=\|u\|^2+a\lambda_0\|\nabla \varphi\|^2+2a\lambda_0\int_\Omega [F(\phi)-F(\phi_\infty)
   - F'(\phi_\infty)\varphi]dx+\|\varphi\|^2+ \|\nabla \theta\|^2.
   \label{def of Y(t)}
   \ee
 In the derivation of \eqref{ra1}, we have used \eqref{sta1}, \eqref{sta2} and the following fact
 \bea
 && \int_\Omega(\Delta\varphi\nabla\phi+ \Delta\phi_\infty\nabla\varphi)\cdot u dx+\int_\Omega u\cdot \nabla \phi \left[-\Delta \varphi+\big(F'(\phi)-F'(\phi_\infty)\big)\right]dx\non\\
 &=& \int_\Omega (\Delta\phi_\infty-F'(\phi_\infty))\nabla\phi\cdot u dx +\int_\Omega u\cdot \nabla F(\phi) dx-\int_\Omega \Delta\phi_\infty\nabla\phi_\infty\cdot u dx\non\\
 &=&0.\non
 \eea
Since we are now dealing with global strong solutions that are
uniformly bounded in $\mathbf{V}\times H^2\times H^2$, it follows
that
 \bea R_1 &\leq&
\|\theta\|_{L^\infty}\|\nabla u\|\|\nabla\phi\|_{\mathbf{L}^4}^2
\leq
C\|\Delta\theta\|^\frac34\| \theta\|^\frac14\|\nabla u\| \|\phi\|_{H^2}^2\non\\
&\leq& \frac{\nu}{12}\|\nabla
u\|^2+ \frac{k}{4}\|\Delta\theta\|^2+ C\|\theta\|^2,  \non\\
R_2 &\leq& |\alpha| |g|\|\theta\|\|u\|\leq \frac{\nu}{12}\|\nabla
u\|^2+C\|\theta\|^2,  \non\\
R_3 &\leq&
\frac{k}{8}\|\Delta\theta\|^2+C\|u\|_{\mathbf{L}^6}^2\|\nabla\theta\|_{\mathbf{L}^3}^2
\leq \frac{k}{8}\|\Delta\theta\|^2+C\|\theta\|^{\frac12}\|\Delta\theta\|^{\frac32} \non\\
&\leq& \frac{k}{4}\|\Delta\theta\|^2+C\|\theta\|^2, \non
 \\
 R_4+R_5&\leq&
\|u\|_{\mathbf{L}^6}\|\nabla\phi\|_{\mathbf{L}^3}\|\varphi\|+C\|\varphi\|^2
 \leq  \frac{\nu}{12}\|\nabla
u\|^2+C\|\varphi\|^2.\non
 \eea
From the definition of $F(\phi)$, we have
 $\left\vert\int_\Omega
 [F(\phi)-F(\phi_\infty)-F'(\phi_\infty)\varphi]
         dx\right\vert\leq
          C_1\|\varphi\|^2$.
Combined with the definition of $\mathcal{Y}(t)$ in \eqref{def of Y(t)}, it
yields that
 \be \mathcal{Y}(t)+C_1\|\varphi\|^2 \geq
C(\|u\|^2+\|\varphi\|_{H^1}^2+\|\nabla \theta\|^2). \label{ra6}
 \ee
  It follows from \eqref{ra1},
\eqref{ra6} and the estimates on $R_m$ ($m=1,...,5$) that
 \be
\frac{d}{dt}\mathcal{Y}(t)+C_2\mathcal{Y}(t)+C_3\mathcal{A}_i(t)
\leq C(\|\varphi\|^2+\|\theta\|^2), \quad i=1,2. \label{ra4a}
 \ee
Recalling Lemma \ref{high2d} ($n=2$) or Lemma \ref{high3da} ($n=3$),
we have
 \be
 \frac{d}{dt}\mathcal{A}_i(t) \leq C_4\mathcal{A}_i(t). \label{simplified high energy inequality}
 \ee
Multiplying \eqref{simplified high energy inequality} with
$\eta=\frac{C_3}{2C_4}$, and adding the resultant to \eqref{ra4a},
we get from \eqref{decaytheta1} and \eqref{rate1} that
 \be
\frac{d}{dt}\big[\mathcal{Y}(t)+\eta\mathcal{A}_i(t) \big]+
C'\big[\mathcal{Y}(t)+\eta\mathcal{A}_i(t) \big] \leq
C(\|\varphi\|^2+\|\theta\|^2),\quad
 \forall\, t \geq 0.\non
 \ee
Consequently,
 \bea &&\mathcal{Y}(t)+\eta\mathcal{A}_i(t) \non\\
&\leq&Ce^{-C't}+Ce^{-C't}\Big(\int_0^{\frac{t}{2}}e^{C'\tau}(1+\tau)^{-\frac{2\theta}{1-2\theta}}d\tau
+\int_{\frac{t}{2}}^{t}e^{C'\tau}(1+\tau)^{-\frac{2\theta}{1-2\theta}}d\tau  \Big)   \non\\
&=&Ce^{-C't}+Ce^{-C't}\int_0^{\frac{t}{2}}e^{C'\tau}(1+\tau)^{-\frac{2\theta}{1-2\theta}}d\tau
 \non\\
&&+Ce^{-C't}\left[\frac{e^{C'\tau}}{C'}(1+\tau)^{-\frac{2\theta}{1-2\theta}}\Big|^{\tau=t}_{\tau=\frac{t}{2}}
+\frac{2\theta}{C'(1-2\theta)}\int_0^te^{C'\tau}(1+\tau)^{-\frac{1}{1-2\theta}}d\tau \right]   \non\\
&\leq&Ce^{-C't}+Ce^{-C't}\Big(e^{\frac{C't}{2}}\int_0^{\frac{t}{2}}(1+\tau)^{-\frac{2\theta}{1-2\theta}}d\tau
+(1+t)^{-\frac{2\theta}{1-2\theta}}e^{C't}
\Big) \non\\
 &\leq&C(1+t)^{-\frac{2\xi}{1-2\xi}}, \ \ \ \ \ \ \ \ \ \forall\, t
\geq 0.\label{conAB}
 \eea
Then our conclusion \eqref{rate} follows from  \eqref{conAB}, the
definitions of $\mathcal{A}_i(t)$, $\mathcal{Y}(t)$ and \eqref{kkk}.
The exponential decay of $\theta$ (see \eqref{decaytheta1}) easily follows
from \eqref{part2 of basic energy law}. The proof of Theorem
\ref{theorem on long time behavior} is complete.


\subsection{Stability of energy minimizers}
We have shown that any global weak (or strong) solution of problem
\eqref{navier-stokes}--\eqref{BC} will converge to a steady state as
time goes to infinity (without smallness restrictions on the initial data).
However, it is not clear to which equilibrium it will converge,
since the set of equilibria may be a continuum (for general Dirichlet boundary data of the phase function). This is different
from the classical concept of stability in the literature. Below we
shall show that if $u_0$ and $\theta_0$ are close to zero and
$\phi_0$ is near a certain local minimizer of the elastic energy
$E(\phi)$, problem \eqref{navier-stokes}--\eqref{BC} admits a unique
global strong solution. Moreover, the energy minimizer is Lyapunov
stable. The same as in Remark \ref{rrr}, the results and their proofs in this subsection actually hold for general Dirichlet boundary conditions for $\phi$, not only the special case \eqref{BC2}.

 \bd \label{definition of local minimizer} The function $\phi^\ast \in \mathcal{K}:=\{\phi\in H^1(\Omega):\ \phi|_\Gamma=-1\}$ is
called a local minimizer of $E(\phi)$, if there exists $\sigma > 0$,
such that for any $\phi \in \mathcal{K}$ satisfying
$\|\phi-\phi^\ast\|_{H^1} \leq \sigma$, it holds $E(\phi) \geq
E(\phi^\ast)$.
 \ed

\begin{remark}
It is easy to verify that any local minimizer of $E(\phi)$ is a
critical point of $E(\phi)$ and satisfies the elliptic boundary
value problem \eqref{staaq}.
\end{remark}

\begin{theorem} \label{theorem on small data}
Suppose $n=3$ and \eqref{ini} is satisfied. Let $\phi^\ast \in
H^2(\Omega)\cap \mathcal{K}$ be a local minimizer of $E(\phi)$.
For arbitrary $r>0$, we consider the set
 \bea
\mathcal{B}_r&=&\big\{(u, \phi, \theta)\in \mathbf{V} \times
(H^2(\Omega)\cap \mathcal{K})
\times \big( H^2(\Omega)\cap H^1_0(\Omega) \big) \big\}:\non\\
&& \quad  \|u\|_{\mathbf{H}^1} \leq r, \ \|\phi-\phi^\ast\|_{H^2}
\leq r,\ \|\theta\|_{H^2} \leq r\}.
 \eea Then there exist positive
constants $\sigma_1, \sigma_2, \sigma_3$ depending on $r$, $\Omega$,
$\sigma$, $\nu^*$,  $\phi^\ast$ and coefficients of the system, such
that for any initial data $(u_0, \phi_0, \theta_0) \in
\mathcal{B}_r$ satisfying
 \be  \|u_0\| \leq \sigma_1,\ \|\phi_0-\phi^\ast\|_{H^1} \leq \sigma_2,\ \|\theta_0\|_{H^1} \leq
\sigma_3, \ee we have

(i) the problem \eqref{navier-stokes}--\eqref{BC} admits a unique
global strong solution $(u, \phi, \theta)$;

(ii) the energy minimizer $\phi^*$ is Lyapunov stable;

(iii) the global strong solution has the same long-time behavior as
in Theorem \ref{theorem on long time behavior}. Although the limit
function $\phi_\infty$ may differ from the minimizer $\phi^*$,  the
total energy $\mathcal{E}(t)$ will converge to the same energy level
of $\phi^\ast$
 \be
  \lim_{t\to+\infty}\mathcal{E}(t)=a\lambda_0E(\phi_\infty)=a\lambda_0 E(\phi^\ast).\label{leee}
 \ee
 Moreover, if $\phi^*$ is an \textit{isolated} local minimizer, then $\phi_\infty=\phi^*$
 and thus $\phi^*$ is asymptotically stable.
\end{theorem}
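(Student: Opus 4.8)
The plan is to combine the conditional global-existence dichotomy of Proposition \ref{proposition on small data} with the \L ojasiewicz--Simon machinery, using the \emph{minimality} of $\phi^*$ to pin the total energy from below. Since $\phi^*$ is a critical point of $E$, Lemma \ref{ls} furnishes exponents $\xi\in(0,\frac12)$ and a radius $\beta>0$ (which we shrink so that $\beta\le\sigma$, with $\sigma$ the minimality radius of Definition \ref{definition of local minimizer}) so that \eqref{LSQ} holds on the $H^1$-ball $B_\beta=\{\phi:\|\phi-\phi^*\|_{H^1}<\beta\}$, on which also $E(\phi)\ge E(\phi^*)$. Write $\mathcal{E}_*$ for the value of the functional \eqref{def of total energy} at the steady state $(\mathbf 0,\phi^*,0)$, i.e. its phase-energy part; since $F\ge0$, any state with $\phi\in B_\beta$ satisfies $\mathcal{E}\ge(\text{phase energy of }\phi)\ge\mathcal{E}_*$. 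First I would record that $\mathcal{E}(0)-\mathcal{E}_*$ is arbitrarily small: it equals $\|u_0\|^2+\zeta\|\nabla\theta_0\|^2+\omega\|\theta_0\|^2$ plus the continuous quantity $2a\lambda_0\big(E(\phi_0)-E(\phi^*)\big)$ (up to the factor fixed by \eqref{def of total energy}), which tends to $0$ as $\|u_0\|,\|\theta_0\|_{H^1},\|\phi_0-\phi^*\|_{H^1}\to0$. Fixing $R$ as the finite bound on $\mathcal{A}_2(0)$ valid on $\mathcal{B}_r$ determines the constant $\varepsilon_0$ of Proposition \ref{proposition on small data}; I then choose $\sigma_1,\sigma_2,\sigma_3$ so small that $\mathcal{E}(0)-\mathcal{E}_*<\varepsilon_0$ and that $(\mathcal{E}(0)-\mathcal{E}_*)^\xi$ lies below the threshold fixed in the next step.

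Next I would run a continuation argument. By Theorem \ref{locstrong} a unique strong solution exists on a maximal interval, and I set
$$T^*=\sup\{T>0:\ \text{solution exists on }[0,T]\ \text{and}\ \|\phi(t)-\phi^*\|_{H^1}<\beta\ \text{for all } t\in[0,T]\}.$$
On $[0,T^*)$ the trajectory lies in $B_\beta$, so by minimality $\mathcal{E}(t)\ge\mathcal{E}_*>\mathcal{E}(0)-\varepsilon_0$; hence alternative (ii) of Proposition \ref{proposition on small data} (an energy drop of size $\varepsilon_0$) cannot occur on $[0,T^*)$, and the comparison argument in its proof keeps $\mathcal{A}_2$ bounded there, precluding blow-up before $T^*$. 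Since $\phi(t)\in B_\beta$, I may center the \L ojasiewicz--Simon inequality at $\phi^*$ and repeat verbatim the computation leading to \eqref{application of LS} (with $\mathcal{E}_\infty$ and $\phi_\infty$ replaced by $\mathcal{E}_*$ and $\phi^*$), obtaining
$$-\frac{d}{dt}\big(\mathcal{E}(t)-\mathcal{E}_*\big)^\xi\ge C\big(\|\nabla u\|+\|\Delta\phi-F'(\phi)\|+\|\Delta\theta\|\big),\quad t\in[0,T^*).$$
Integrating and invoking \eqref{phit} yields $\int_0^{T^*}\|\phi_t\|\,dt\le C(\mathcal{E}(0)-\mathcal{E}_*)^\xi$, whence $\|\phi(t)-\phi_0\|$ is uniformly small in $L^2$; interpolating with the uniform $H^2$-bound on $\mathcal{B}_r$ upgrades this to $H^1$, so that $\|\phi(t)-\phi^*\|_{H^1}\le\|\phi(t)-\phi_0\|_{H^1}+\sigma_2\le\beta'<\beta$ for all $t<T^*$, provided the $\sigma_i$ are small enough. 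This strict bound together with the no-blow-up just established contradicts the maximality of a finite $T^*$; hence $T^*=+\infty$ and alternative (i) of Proposition \ref{proposition on small data} holds, giving the unique global strong solution uniformly bounded in $\mathbf V\times H^2\times(H^1_0\cap H^2)$. This proves (i); and since $\beta'\to0$ as the $\sigma_i\to0$, the same estimate gives the Lyapunov stability of $\phi^*$ in (ii).

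For (iii), the solution being a global bounded strong solution, Theorem \ref{theorem on long time behavior} applies directly: $(u,\phi,\theta)\to(\mathbf 0,\phi_\infty,0)$ with $\phi_\infty$ solving \eqref{staa}, together with the rate \eqref{rate} and the exponential decay of $\theta$. The limit $\phi_\infty$ lies in $\overline{B_\beta}$ and is a critical point, so $-\Delta\phi_\infty+F'(\phi_\infty)=0$; evaluating the inequality \eqref{LSQ} (centered at $\phi^*$) at $\phi=\phi_\infty$ makes its left-hand side vanish, forcing $|E(\phi_\infty)-E(\phi^*)|^{1-\xi}\le0$, i.e. $E(\phi_\infty)=E(\phi^*)$, which is exactly \eqref{leee} after passing to the limit in $\mathcal{E}$. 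Finally, if $\phi^*$ is an isolated local minimizer, then shrinking the initial data forces $\phi_\infty$ into a neighborhood of $\phi^*$ containing no other critical point; as $\phi_\infty$ is itself a critical point, this gives $\phi_\infty=\phi^*$, i.e. asymptotic stability.

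The main obstacle is the entanglement inside the bootstrap: global existence requires the energy not to drop by $\varepsilon_0$, which is guaranteed only while $\phi$ stays in $B_\beta$, whereas trapping $\phi$ in $B_\beta$ needs the uniform-in-time $L^1$ control of $\|\phi_t\|$ that \L ojasiewicz--Simon supplies only on the interval where the solution already lives in $B_\beta$. Closing this loop rests on two quantitative facts holding at once: the minimality of $\phi^*$ (so that $\mathcal{E}\ge\mathcal{E}_*$ pins the energy from below on $B_\beta$, which is precisely where a mere critical point would be insufficient) and the smallness of $(\mathcal{E}(0)-\mathcal{E}_*)^\xi$ relative to $\beta$. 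Care is also needed in fixing $R$ (hence $\varepsilon_0$) consistently with $\mathcal{B}_r$ and in the interpolation upgrading the $L^2$-smallness of $\phi(t)-\phi_0$ to $H^1$, but these are routine given the uniform bounds of Corollary \ref{low-estimate} and the estimates above.
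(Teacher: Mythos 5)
Your proposal is correct and follows essentially the same route as the paper's proof: the dichotomy of Proposition \ref{proposition on small data}, the minimality of $\phi^*$ pinning $\mathcal{E}$ from below while the trajectory stays in a small $H^1$-ball, the \L ojasiewicz--Simon inequality centered at $\phi^*$ (with the $L^2$-to-$H^1$ interpolation against the uniform $H^2$-bound) to trap $\phi$ in that ball, and the same treatment of (ii)--(iii). The only difference is cosmetic: you package the bootstrap as a single maximal-time continuation argument, whereas the paper iterates in steps of fixed length $\tfrac{2t_0}{3}$ and locates a first exit time by contradiction.
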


\begin{proof}
By Proposition \ref{proposition on small data}, in order to prove
the existence of global strong solutions, we only have to verify
that
 \be
\mathcal{E}(t)-\mathcal{E}(0) \geq -\varepsilon_0, \ \ \forall \ t
\in [0, +\infty),   \label{tedro}
 \ee
  where $\varepsilon_0$ is
defined as in \eqref{epsilon}. We know from the argument in
Proposition \ref{proposition on small data} that there exists
$t_0=\frac34T_{max}$ and $\mathcal{A}_2(t)$ is uniformly bounded on
$[0, t_0]$ by a constant depending on $r$, $\phi^*$, $\Omega$ and
coefficients of the system. Since $\mathcal{A}_2(t)$ is bounded on
$[0, t_0]$, it holds
 \bea
&& \mathcal{E}(t)-\mathcal{E}(0)\non\\
&=&\frac12\|u(t)\|^2-\frac12\|u_0\|^2+a\lambda_0(E(\phi(t))-E(\phi_0))
+\frac{\zeta}{2}\|\nabla\theta(t)\|^2+\frac{\omega}{2}\|\theta(t)\|^2\non\\
&&
-\frac{\zeta}{2}\|\nabla\theta_0\|^2-\frac{\omega}{2}\|\theta_0\|^2
\non\\
&\geq&-\frac12\|u_0\|^2+a\lambda_0(E(\phi(t))-E(\phi^\ast)+E(\phi^\ast)-E(\phi_0))
-\frac{\zeta}{2}\|\nabla\theta_0\|^2-\frac{\omega}{2}\|\theta_0\|^2 \non\\
&\geq&-\frac12\|u_0\|^2
-\frac{\zeta}{2}\|\nabla\theta_0\|^2-\frac{\omega}{2}\|\theta_0\|^2
-C_1\|\phi_0-\phi^\ast\|_{H^1}+a\lambda_0(E(\phi(t))-E(\phi^\ast)),\non
 \eea
 where $\zeta, \omega$ are as in Proposition \ref{BEL} and $C_1$ depends on $r$, $\phi^*$, $\Omega$, and coefficients of the system.
    We assume that $\sigma_m$ ($m=1, 2, 3$) are sufficiently small such that
\be \frac12\sigma_1^2+ \frac12\max\{\zeta,
\omega\}\sigma_3^2+C_1\sigma_2\leq \varepsilon_0.\label{sig1} \ee
If we can ensure that
 \be
  E(\phi(t)) - E(\phi^\ast)\geq 0, \ \ \forall \ t \in [0, t_0],  \label{nearc}
 \ee
 then we  have
  \be \mathcal{E}(t)-\mathcal{E}(0)
\geq -\varepsilon_0, \ \ \forall \ t \in [0, t_0]. \label{tedro1}
\ee This enables us to apply the argument in the proof for
Proposition \ref{proposition on small data} to extend the local
strong solution from $[0, t_0]$ to $[0, t_0+\frac{2t_0}{3}]=[0,
\frac{5t_0}{3}]$.

 By Definition
\ref{definition of local minimizer}, \eqref{nearc} can be reduced to
the following condition
 \be
 \|\phi(t)-\phi^\ast\|_{H^1}< \mbox{min}\{\sigma, \beta\}:=\delta, \ \ \forall \ t \in[0,
 t_0],\label{nlm}
 \ee
 where $\beta>0$ is the constant depending on $\phi^*$ given in Lemma \ref{ls} (taking $\psi=\phi^*$ therein, we note that $\phi^*$ is a critical point of $E(\phi)$). We shall show that one can choose a smaller $\sigma_2$ satisfying
\be \sigma_2\leq \frac14\delta,\label{sig2} \ee such that
\eqref{nlm} holds. This can be done via the \L ojasiewicz--Simon
approach  by a contradiction argument (cf. Wu et al \cite{LWX11}).
If \eqref{nlm} is not true, then by the continuity of $\phi$ that
$\phi\in C([0,t_0]; H^1_0)$, there exists a minimal time $T_0\in (0,
t_0]$, such that $\|\phi(T_0)-\phi^\ast\|_{H^1}=\delta$. We observe
that $\mathcal{E}(t) \geq a\lambda_0 E(\phi^\ast)$ for any $t \in
[0, T_0]$.  If for some $T\leq T_0$,
$\mathcal{E}(T)=a\lambda_0E(\phi^\ast)$, then we deduce from the
definition of the local minimizer and the basic energy inequality
\eqref{basic energy inequality} that for $t\geq T$, $\mathcal{E}(t)$
cannot drop and will remain $a\lambda_0E(\phi^\ast)$. Thus, $\nabla
u=\Delta\phi-F'(\phi)=\Delta\theta \equiv 0$ for all $t \geq T$ and
the evolution becomes stationary. The conclusion easily follows. In
the following, we just assume $\mathcal{E}(t) >
a\lambda_0E(\phi^\ast)$ for $t \in [0, T_0]$. Applying Lemma
\ref{ls} with $\psi=\phi^\ast$, we get (similar to
\eqref{application of LS})
 \be   -\frac{d}{dt}[\mathcal{E}(t)-a\lambda_0E(\phi^\ast)]^\xi \geq C(\|\nabla u\|+\|\Delta \phi-F'(\phi)\|+\|\Delta \theta\|), \quad \forall \, t\in (0,
T_0).\non
 \ee
Then we infer from \eqref{phit} that
 \bea
 \|\phi(T_0)-\phi_0\|_{H^1} &\leq&
 C\|\phi(T_0)-\phi_0\|^{\frac12}\|\phi(T_0)-\phi_0\|_{H^2}^{\frac12}\non\\
 &\leq& C\Big( \int_{0}^{T_0}\|\phi_t(t)\|dt \Big)^\frac12 \leq
C[\mathcal{E}(0)-a\lambda_0E(\phi^\ast)]^{\frac{\xi}{2}}\non\\
&  \leq & C_2\Big(
\|u_0\|^2+\|\theta_0\|^2_{H^1}+\|\phi_0-\phi^\ast\|_{\mathbf{H}^1}
\Big)^\frac{\xi}{2}.
 \eea
Choosing $\sigma_m$ ($m=1, 2, 3$) satisfying \eqref{sig1},
\eqref{sig2} and
 \be
 C_2\Big(
\sigma_1^2+\sigma_3^2+\sigma_2 \Big)^\frac{\xi}{2}\leq
\frac12\delta, \label{sig3}
 \ee
 we can see that
 \be \|\phi(T_0)-\phi^\ast\|_{H^1} \leq
\|\phi(T_0)-\phi_0\|_{H^1}+\|\phi_0-\phi^\ast\|_{H^1} \leq \frac34
\delta <\delta, \non \ee
which leads to a contradiction with the definition of $T_0$. Thus,
\eqref{nlm} is true and  \eqref{tedro1} holds.

By iteration, we conclude that the local strong solution $(u, \phi,
\theta)$ can be extended by a fixed length $\frac{2t_0}{3}$ in each
step and it is indeed a global solution with $\mathcal{A}_2(t)$
being uniformly bounded. Then by Theorem \ref{theorem on long time
behavior}, there exists a critical point $\phi_\infty$ of $E(\phi)$,
such that the global solution $(u(t), \phi(t), \theta(t))$ satisfies
the same long-time behavior \eqref{cgce} with convergence rate
\eqref{rate}. It is easy to see from the above argument that for any
$\epsilon>0$, by choosing sufficiently small $\sigma_m$ ($m=1, 2,
3$), it holds $\|\phi(t)-\phi^\ast\|_{H^1} \leq \epsilon $, for all
$t\geq 0$. This implies the Lyapunov stability of the local
minimizer $\phi^*$. In particular, we have
 \be \|\phi_\infty-\phi^\ast\|_{H^1}
\leq \|\phi(t)-\phi_\infty\|_{H^1}+\|\phi(t)-\phi^\ast\|_{H^1} \leq
\mbox{min}\{\sigma, \beta \}.\non
 \ee
 Applying the \L
ojasiewicz--Simon inequality once more with $\psi=\phi^\ast$, we
conclude that \be |E(\phi_\infty)-E(\phi^\ast)|^{1-\xi}\leq
 \|-\Delta \phi_\infty+F'(\phi_\infty)\|=0,\label{EsED}
 \ee
which together with \eqref{cgce} yields \eqref{leee}. The proof is
complete.
\end{proof}

\bigskip
\noindent \textbf{Acknowledgments:} The authors wish to thank the referees for
their very helpful comments and suggestions on an earlier version of this paper. Part of the work was done when
Xu was visiting School of Mathematical Sciences at Fudan
University, whose hospitality is acknowledged. Wu was partially
supported by NSF of China 11001058, SRFDP, the Fundamental Research
Funds for the Central Universities. Xu was partially supported by
NSF grant DMS-0806703.


\begin{thebibliography}{99}
\itemsep=0pt
\bibitem{Ab} H. Abels, On a diffuse interface model for two-phase
flows of viscous incompressible fluids with matched densities, Arch.
Ration. Mech. Anal., \textbf{194} (2009), 463--506.

\bibitem{AMW} D.M. Anderson, G.B. McFadden and A.A. Wheeler,
Diffuse-interface methods in fluid mechanics, Annual Review of Fluid
Mech., \textbf{30} (1998), 139--165.

\bibitem{BB03} R. Borcia and M. Bestehorn, Phase-field model for
Marangoni convection in liquid-gas systems with a deformable
interface, Phys. Rev. E, \textbf{67} (2003), 066307.


\bibitem{BB07} R. Borcia and M. Bestehorn, Phase-field simulations for drops
and bubbles, Phys. Rev. E, \textbf{75} (2007), 056309.

\bibitem{B} F. Boyer, Mathematical study of multi-phase flow under shear through order parameter
formulation, Asymptot. Anal., \textbf{20}(2) (1999), 175--212.

\bibitem{58} J.W. Cahn and J.E. Hillard, Free energy of a
nonuniform system. I. Interfacial free energy, J. Chem. Phys.,
\textbf{28} (1958), 258--267.

\bibitem{C09} B. Climent-Ezquerra, F. Guill\'en-Gonz\'alez and M. Jesus
Moreno-Iraberte, Regularity and time-periodicity for a nematic
liquid crystal model, Nonlinear Anal., \textbf{71} (2009), 539--549.

\bibitem{FLSY05} J. Feng, C. Liu, J. Shen and P. Yue, An energetic variational formulation with phase field
 methods for interfacial dynamics of complex fluids: advantages and challenges,  in Modeling of Soft Matter
 (IMA Volumes in Mathematics and its Applications, \textbf{141}, eds. M.-C. T. Calderer and E. Terentjev), Springer, New York, (2005), 1--26.

\bibitem{GG10} C. Gal and M. Grasselli, Longtime behavior for a model of
homogeneous incompressible two-phase flows, Discrete Conti. Dyna.
Sys., \textbf{28}(1) (2010), 1--39.

\bibitem{GG10a} C. Gal and M. Grasselli, Asymptotic behavior of a
Cahn--Hilliard--Navier--Stokes system in 2D,  Ann. Inst. H.
Poincar\'e Anal. Non Lin\'eaire,  \textbf{27}(1) (2010), 401--436.

\bibitem{GG10b}  C. Gal and M. Grasselli, Trajectory attractors for binary fluid mixtures in 3D,
Chinese Ann. Math. Ser. B, \textbf{31} (2010), 655--678.

\bibitem{HA91} A. Haraux, \textit{Syst$\grave{e}$mes dynamiques dissipatifs et applications}, Masson, Paris, 1991.

\bibitem{HJ99} A. Haraux and M.A. Jendoubi, Convergence of bounded
weak solutions of the wave equation with dissipation and analytic
nonlinearity, Calc. Var. PDEs,  {\bf 9} (1999), 95--124.

\bibitem{J981} M.A. Jendoubi, A simple unified approach to
some convergence theorem of L. Simon, J. Func. Anal., {\bf 153}
(1998), 187--202.

\bibitem{JN99} D. Johnson and R. Narayanan, A tutorial on the
Rayleigh--Marangoni--Benard problem with multiple layers and side
wall effects, Chaos, \textbf{9}(1) (1999), 124--140.

\bibitem{KT08} B. Khouider and E.-S. Titi, An inviscid regularization for the surface quasi-geostrophic equation, Comm. Pure Appl. Math., \textbf{61}(10) (2008), 1331--1346.

\bibitem{LL95} F.-H. Lin and C. Liu, Nonparabolic dissipative system
modeling the flow of liquid crystals, Comm. Pure Appl. Math.,
\textbf{XLVIII} (1995), 501--537.

\bibitem{LS03} C. Liu and J. Shen, A phase field model for the mixture of two incompressible fluids
and its approximation by a Fourier-spectral method, Physica D,
\textbf{179} (2003),  211--228.

\bibitem{LSFY05} C. Liu, J. Shen, J. Feng and P. Yue, Variational approach in two-phase flows of
complex fluids: transport and induced elastic stress, Mathematical Models and Methods in Phase Transitions,
editor: A. Miranville, Nova Publishers, New York (2005), 259--278.

\bibitem{LW01} C. Liu and N. Walkington, An Eulerian description of fluids containing visco-elastic particles, Arch. Rational Mech. Anal., \textbf{159} (2001), 229--252.

\bibitem{LT98} J. Lowengrub and L. Truskinovsky,
Quasi-incompressible Cahn--Hilliard fluids and topological
transitions, R. Soc. Lond. Proc. Ser. A Math. Phys. Eng. Sci.,
\textbf{454} (1998), 2617--2654.

\bibitem{M1871} C.G.M. Marangoni, Ueber die Ausbreitung der Tropfen einer Flussigkeit auf der
Oberflache einer anderen, Ann. Phys. Chem. (Poggendorff),
\textbf{143}(7) (1871), 337--354.

\bibitem{MA00} M.A. Mendes-Tatsis and D. Agble, The effect of
surfactants on Marangoni convection in the isobutanol/water system,
J. Non-Equilib. Thermodyn., \textbf{25} (2000), 239--249.

\bibitem{Se} J. Serrin, On the interior of weak solutions of Navier--Stokes equations, Arch. Ration. Mech.
Anal., \textbf{9} (1962), 187--195.

\bibitem{S83} L. Simon, Asymptotics for a class of nonlinear
evolution equation with applications to geometric problems, Ann.
Math., {\bf 118} (1983), 525--571.

\bibitem{SS59} C.V. Sternling and E. Scriven, Interfacial
Turbulence: Hydrodynamic instability and the marangoni effect, A. I.
Ch. E. Journal, \textbf{5} (1959), 514--523.

\bibitem{SLX09} P. Sun, C. Liu and J. Xu, Phase-field model of
thermo-induced Marangoni effects in the mixtures and its numerical
similations with mixed finite element method, Commun. Comput. Phys.,
\textbf{6}(5) (2009), 1095--1117.



\bibitem{Te97} R. Temam, \emph{Infinite dimensional dynamical systems in mechanics and
physics}, Springer, New York, 1997.

\bibitem{Te01} R. Temam, \emph{Navier--Stokes equations: theory and numerical
analysis}, AMS, 2001.

\bibitem{T1855} J. Thompson, On certain curious motions observable at the surfaces
of wine and other alcoholic liquors, Phil. Mag., \textbf{10} (1855),
330--333.

\bibitem{LWX10} H. Wu, X. Xu and C. Liu, Asymptotic behavior for a
nematic liquid crystal model with different kinematic transport
properties,  Calc. Var. PDEs, (2011), online first, DOI:
10.1007/s00526-011-0460-5.

\bibitem{LWX11} H. Wu, X. Xu and C. Liu, On the general
Ericksen--Leslie system: Parodi's relation, well-posedness and
stability, (2011), preprint, arXiv:1105.2180v5.



\bibitem{YFLS04} P. Yue, J. Feng, C. Liu and J. Shen, A
diffuse-interface method for simulating two-phase flows of complex
fluids, J. Fluid Mech, \textbf{515} (2004), 293--317.

\bibitem{YFLS05} P. Yue, J. Feng, C. Liu and J. Shen,
Interfacial forces and Marangoni flow on a nematic drop retracting
in an isotropic fluid, Journal of Colloid and Interface Science,
\textbf{290} (2005), 281--288.


\bibitem{Z04}
{\au S.~Zheng},  \emph{Nonlinear evolution equations}, Pitman series
Monographs and Survey in Pure and Applied Mathematics, \textbf{133},
\eds{Chapman \& Hall/CRC}{Boca Raton, Florida}{2004}



\end{thebibliography}
\end{document}